\newtheorem{theorem}{Theorem}[section]
\newtheorem{proposition}[theorem]{Proposition}
\newtheorem{corollary}[theorem]{Corollary}
\newtheorem{lemma}[theorem]{Lemma}
\newtheorem*{theorem*}{Theorem}
\newtheorem*{proposition*}{Proposition}
\newtheorem*{corollary*}{Corollary}
\newtheorem*{lemma*}{Lemma}
\theoremstyle{definition}
\newtheorem{definition}[theorem]{Definition}
\newtheorem{punto}[theorem]{}
\newtheorem*{punto*}{ }
\newtheorem{example}[theorem]{Example}
\newtheorem{remark}[theorem]{Remark}
\newtheorem*{remark*}{Remark}
\newtheorem*{definition*}{Definition}
\newcommand{\al}{{\mathfrak{Alg}}}
\newcommand{\au}{{\textbf{Aut}}}
\newcommand{\V}{{\mathcal{V}}}
\newcommand{\VV}{{\mathfrak {Alg}}}
\newcommand{\W}{{\mathfrak {Coalg}}}
\newcommand{\pa}{{\partial}}
\newcommand{\ul}{\underline}
\newcommand{\ol}{\overline}
\newcommand{\nr}{\rightsquigarrow}
\newcommand{\w}{{\circ}}
\newcommand{\ww}{{\otimes}}
\newcommand{\la}{{\lambda}}
\newcommand{\xr}{\xrightarrow}
\newcommand{\A}{{\mathcal{A}}}
\newcommand{\B}{{\mathbb{B}}}
\newcommand{\bH}{{\mathcal{H}}}
\newcommand{\X}{{\mathcal{X}}}
\newcommand{\e}{{\varepsilon}}
\begin{document}
\title{Some exact sequences associated with adjunctions in bicategories. Applications.}
\author{J. G{\'o}mez-Torrecillas \\ and \\ B. Mesablishvili}
\address{Departament of Algebra and CITIC, Universidad de
Granada  E18071 Granada, Spain}
\email{gomezj@ugr.es}
\address{
Razmadze Mathematical Institute, Tbilisi 0193, Republic of
Georgia}
\email{bachi@rmi.ge}

\keywords{Bicategories, Adjunctions, (Firm) bimodules, Coalgebras, Amitsur cohomology, Descent data, Hilbert's theorem 90}
\subjclass[2010]{16D20, 16T15, 18D05, 18D10, 18G30}

\thanks{Research partially supported by grant MTM2013-41992-P from the Ministerio de Econom\'{\i}a y Competitividad of the
Spanish Government and from FEDER. The second named author was supported by Shota Rustaveli National
Science Foundation Grants DI/18/5- 113/13 and FR/189/5- 113/14}

\begin{abstract}
We prove that the classical result asserting that the relative Picard group of a faithfully flat extension of commutative rings is isomorphic to the first Amitsur cohomology group stills valid in the realm of symmetric monoidal categories. To this end, we built some group exact sequences from an adjunction in a bicategory, which are of independent interest. As a particular byproduct of the evolving theory, we prove a version of Hilbert's theorem 90 for cocommutatvie coalgebra coextensions (=surjective homomorphisms).
\end{abstract}

\maketitle

\tableofcontents

\section*{Introduction}
We prove that the classical result asserting that the relative Picard group of a faithfully flat extension of commutative rings is isomorphic to the first Amitsur cohomology group stills valid in the realm of symmetric monoidal categories. To this end, we prove that for any commutative algebra $\textbf{A} = (A,m,e)$ in a symmetric monoidal category $\V$ satisfying some technical conditions, there is an exact sequence of groups
\begin{equation}\label{seq.5a}
0 \xr{} \textbf{Aut}_{\V}(I) \xr{\varpi_0} \textbf{Aut}_{\V_\textbf{A}}(A)\xr{\kappa_A}
\textbf{Aut}_{\textbf{A}-\texttt{cor}}(A \ww A) \xr{o_A} \textbf{Pic}^{c}(\textbf{I}) \xr{\textbf{Pic}^{c}(e)} \textbf{Pic}^{c}(\textbf{A}).
\end{equation}
Details on the group homomorphisms involved are to be found in subsection \ref{Picard}.
The sequence \eqref{seq.5a} will be built with the help of an exact sequence of groups associated, under mild conditions, to any adjunction in a bicategory (see Theorem \ref{exact}). The latter generalizes some useful exact sequences associated to a ring extension, used in \cite{Kanzaki:1968, Miyashita:1973} in the unital case, and in \cite{ElKaoutit/Gomez:2012} in the realm of ring with local units, to derive adequate versions of Chase-Harrison-Rosenberg's seven exact sequence \cite{Chase/alt:1965}. Thus, our results could be of independent interest for extending to wider contexts the aforementioned seven terms sequence. We specialize our general theory to the case of the bicategory of bimodules over a (non necessarily symmetric) monoidal category $\V$ (see Theorem \ref{main.alg}). We get in particular an exact sequence of groups associated to a monomorphic homomorphism of $\V$--algebras (see Example \ref{AB}).

Every comonadic homomorphism of commutative $\V$--algebras $\iota: \mathbf{A} \to \mathbf{B}$, where $\V$ is a symmetric monoidal category fulfilling some minimal technical requirements, leads to a homomorphism of abelian groups $\textbf{Pic}^{c}(\iota):\textbf{Pic}^{c}(\textbf{A})\to \textbf{Pic}^{c}(\textbf{B})$. Our theory applies to obtain (Theorem \ref{Amitsur.alg.gen.}) that, if the change-of-base functor associated to $\iota$ is comonadic, then  $\mathbf{Ker}(\mathbf{Pic}^{c}(\iota))$ is isomorphic to the first Amitsur cohomology group $\mathcal{H}^1(\iota\,, \ul{\mathbf{Aut}}_A^{\mathfrak {Alg}})$ of $\iota$ with coefficients in the functor $\ul{\mathbf{Aut}}_A^{\mathfrak {Alg}}$ (which is a generalization of the usual units functor, see Lemma \ref{autalg}). The problem is easily reduced to prove that, in \eqref{seq.5a}, the cokernel of $\kappa_A$ is isomorphic to $\mathcal{H}^1(e\,, \ul{\mathbf{Aut}}_I^{\mathfrak {Alg}})$, whenever $- \otimes A$ is a comonadic functor. Our proof involves some classical results, namely a version of  the B\'{e}nabou-Roubaud-Beck theorem identifying the category of descent data with an Eilenberg-Moore category (Theorem \ref{BRB}), and Grothendiek's  isomorphism between the Amitsur first  cohomology pointed set and the set of descent data of an effective descent morphism (Proposition \ref{Grothendieck}). A brief account of the required classical theory is given in the Appendix.

In the final section, we apply our general theory to the bicategory of bicomodules. As a particular  a version of Hilbert's theorem 90 for cocommutatvie coalgebra coextensions (=surjective homomorphisms) (Theorem \ref{Hilbert}) is obtained.

\section{Preliminaries}

In this section, we list some categorical notions and basic constructions that will be needed. Our basic references on categories are \cite{A,FB,M}.

\begin{punto}{\bf Subobjects and quotient objects.}
Let $a$ be an object of a category $\mathcal{A}$. Preorder monomorphisms with range $a$ by setting $j\leq i$ if
$j$ is of the form $j=ik$; the equivalence classes for the relation
$$``j\leq i \,\,\,\text{and}\,\,\, i\leq j"$$ are called \emph{subobjects}
of $a$. We write $\textbf{Sub}_{\mathcal{A}}(a)$ for the the class of all
subobjects of $a$. We often identify a subobject with a representative monomorphism, and we call
the subobject regular etc. if the monomorphism $i$ is regular etc.

Dually, one has the collection $\textbf{Quot}_{\mathcal{A}}(a)=\textbf{Sub}_{\mathcal{A}^{op}}(a)$ of isomorphism classes
of epimorphisms with domain $a$ ($\mathcal{A}^{op}$ denotes the opposite category of $\mathcal{A}$). We shall call an element of $\textbf{Quot}_{\mathcal{A}}(a)$ a \emph{quotient object}
of $a$. Note that for epimorphisms with domain $a$ we write $j\leq i$ if $j$ is of
the form $j=ki$.
\end{punto}

\begin{punto}{\bf Images and coimages.} Recall that a category admits \emph{images} if any morphism $f$ can
be written as $f=ip$ with $i$ monomorphic and $p$ regular epimorphic. The subobject $[i]$ of the codomain of
$f$ is called the \emph{image} of $f$. Dually, a category is said to admit \emph{coimages} if any morphism $f$
can be written as $f=ip$ with $p$ epimorphic and $i$ regular monomorphic. The quotient object $[p]$ of the domain
of $f$ is called the \emph{coimage} of $f$. We say that a monoidal category admits (co)images if its underlying
ordinary category does so.
\end{punto}

\begin{punto}\label{medioinvertibles}{\bf Subobjects and quotient objects of (co)algebras.}
Suppose that $\V=(\V,\otimes, I)$ is a fixed monoidal category with underlying ordinary category $\V$,
tensor product $\otimes$ and monoidal unit $I$. Recall that an \emph{algebra} in  $\V$ (or $\V$-algebra)
consists of an object $A$ of $\V$ endowed with a multiplication $m_A: A \otimes A \to A$  and unit morphism $e_A : I \to A$,
subject to the usual associative and identity conditions. These algebras are the objects of a category  $\verb"Alg"(\V)$ with the obvious morphisms.
Dually, one has the notions of $\V$--\emph{coalgebra}; the corresponding category of $\V$--coalgebras is denoted by $\verb"Coalg"(\V)$.

Given a $\V$-algebra $\textbf{A}=(A,m_A,e_A)$, we write $\mathfrak{I}^l_\V(\mathbf{A})$ for the subset
of $\textbf{Sub}_\V(A)$ consisting of those elements $[(J,i\!_J : J \to A)]$ for which the composite
$$\xi^l_{i\!_{_J}}:A\ww J \xr{A\ww i\!_J}A\ww A \xr{m_A}A$$ is an isomorphism. Symmetrically, we let
$\mathfrak{I}^r_\V(\textbf{A})$ denote the subclass of $\textbf{Sub}_\V(A)$ consisting of those elements
$[(J,i\!_J : J \to A)]$, for which the composite $$\xi^r_{i\!_{_J}}:J\ww A \xr{i\!_J \ww A}A\ww A \xr{m_A}A$$
is an isomorphism.

Dually, for a $\V$-coalgebra $\textbf{C}=(C,\delta,\e)$, we write $\mathcal{Q}_\V^l(\textbf{C})$ (resp.
$\mathcal{Q}_\V^r(\textbf{C})$) for the subset of $\textbf{Quot}_\V(C)$ consisting of those
elements ${[(P,\pi_P: C\to P)]}$ for which the composite $$C \xr{\delta}C \ww C \xr{C \ww \pi_P} C \ww P$$
(resp. $$C \xr{\delta}C \ww C \xr{\pi_P \ww C} P \ww C)$$ is an isomorphism.

\end{punto}

\begin{punto}{\bf Adjunctions in bicategories.}
We begin by recalling
from \cite{Bn}  that a bicategory $\mathbb{B}$ consists of :
\begin{itemize}
\item a class $\text{Ob}(\mathbb{B})$ of objects, or 0-cells;

\item a family $\mathbb{B}(A, B)$, for all $A, B \in
\text{Ob}(\mathbb{B})$, of hom-categories, whose objects and
morphisms are respectively called 1-cells and 2-cells;

\item a (horizontal) composition operation, given by a family of
functors
$$\mathbb{B}(B, C) \times \mathbb{B}(A, B) \to \mathbb{B}(A,
C)$$ whose action on a pair $(g, f) \in \mathbb{B}(B, C)\times
\mathbb{B}(A, B)$ is written $g \w f$;

\item identities, given by 1-cells $1_A \in \mathbb{B}(A, A)$,
for $A \in \text{Ob}(\mathbb{B})$;

\item natural isomorphisms $$\alpha_{h, g, f}: (h \w g)
\w f \simeq h \w(g \w f), l_f : 1_A \w f \simeq f \,\,\text{and}
\,\, r_f : f \w 1_A \simeq f, $$
\end{itemize} subject to two coherence axioms (see \cite{Bn}).

When the context is clear, we write
$[A,B]$ instead of $\B(A,B)$.

We review the concept of adjunction in an arbitrary bicategory along with some of the general theory
needed later on.

Fix a bicategory $\B$. An adjunction $(\eta, \e: f \dashv f^*: B \to A)$ in $\B$ consists of objects $A$ and $B$,
1-cells $f: A\to B$ and $f^*: B\to A$, and 2-cells $\eta:1_A\to f^*\w f$, called the
\emph{unit}, and $\e: f\w  f^* \to 1_B$, called the \emph{counit} such
the following diagrams commute in $[A, B]$ and $[B, A]$, respectively:
\begin{equation}\label{adj1} \xymatrix{ f \w 1_{A} \ar[r]^{f \w \eta}\ar[d]_{r_f}
& f \w (f^* \w f)  \ar[r]^{\alpha_{f, f^*, f}^{-1}}&
(f \w f^*)\w f \ar[d]^{\e \w f}\\
f \ar[rr]_{l_f^{-1}}&& 1_B \w f}
\end{equation}
and
\begin{equation}\label{adj2}\xymatrix{ 1_A \w f^*  \ar[r]^{\eta \w f^* }
\ar[d]_{l_{f^*}}& (f^* \w f )\w f^* \ar[r]^{\alpha_{f^*, f, f^*}}
& f^* \w (f \w f^*) \ar[d]^{ f^* \w \e } \\
f^* \ar[rr]_{r_{f^*}^{-1}}&& f^* \w 1_B \,.}
\end{equation}

Let $\eta, \e :f \dashv f^* : B \to A$ be adjunction in
$\B$ and let $X$ be an arbitrary 0-cell of $\B$.
Then the functor
$$[X,f]=f \w - :[X, A] \to [X, B]$$ admits as a right
adjoint the functor $$[X,f^*]=f^* \w - : [X, B] \to [X, A]\,.
$$ The unit $\eta^X$ and counit $\e^X$ of this adjunction are
given by the formulas:
$$\eta^X_g : g \xr{l_g^{-1}} 1_A \w g \xr{\eta \w g} (f^* \w f) \w g \xr{\alpha_{f^*, f, g}}
f^* \w (f \w g),\,\,\, \text{for\,\,all}\,\, g \in [X,A]$$ and
$$\e^X_h : f \w (f^* \w h)\xr{\alpha_{f,f^*, h}^{-1}}(f\w f^*) \w h \xr{\e \w h} 1_B \w h \xr{l_h} h, \,\,\,
\text{for\,\,all}\,\, h \in [X, B].$$

The situation may be pictured as
\[
\xymatrix{ [X,A] \ar@{}[rr]|\bot \ar@/^1pc/ [rr]^{[X,f]=f\w -} && [X,B]
\ar@/^1pc/ [ll]^{[X, f^*]=f^*\w -} }
\]

\end{punto}

\begin{definition*} A 1-cell $f:A \to B$ in $\mathbb{B}$ is called \emph{invertible} if there
exist a 1-cell $g : B \to A$ and isomorphisms $g\w f \simeq 1_A$ and $f\w g \simeq 1_B$. The 1-cell $g$ is called \emph{a pseudo-inverse} of $f$.
\end{definition*}

Recall that an \emph{adjoint equivalence} in $\mathbb{B}$  is an adjunction in which
both the unit and counit are isomorphisms, and that any equivalence is part of an adjoint equivalence.

\begin{remark}\label{equivalence} If a 1-cell $h: A \to A$ is invertible, then, for any object $X \in \mathbb{B}$, both
functors $[X,h]=h \w - :[X, A] \to [X, A]$ and $[h,X]=- \w h :[A, X] \to [A, X]$ are equivalences of categories, and thus they preserve existing limits and colimits. In particular, they preserve monomorphisms and epimorphisms.
\end{remark}

The following is an example of bicategory to which some of our general results will be applied.

\begin{example}\label{Firm}{\bf Firm bimodules.}
Let $S$ be a ring, which is not assumed to be unital. A right $S$--module $M$ is said to be \emph{firm} \cite{Quillen} if the map $M \otimes_S S \to M$ sending $m \otimes_S s$ to $ms$ is an isomorphism. Thus, the ring $S$ is said to be firm  if the multiplication map $S \otimes_S  S \to S$ is an isomorphism. Firm left modules and firm bimodules are defined analogously. We denote by $\mathbf{Firm}$ the bicategory whose $0$--cells are firm rings, the $1$--cells are firm bimodules and the $2$--cells are homomorphisms of firm bimodules. The horizontal composition in $\mathbf{Firm}$ is given by the tensor product of bimodules. Given a homomorphism $\varphi : R \to S$, where $R$ and $S$ are firm rings, we may consider the bimodules ${}_R S_S$ and ${}_S S _R$ in the usual way. We say that $\varphi$ is a \emph{homomorphism of firm rings} if ${}_RS_S$ and ${}_SS_R$ are firm bimodules. In this case, we have $1$--cells ${}_S S _R : R \to S$ and ${}_R S _S : S \to R$, which form an adjunction ${}_SS_R \dashv {}_RS_S$ in $\mathbf{Firm}$. Its counit is the multiplication map $\mu : S \otimes_R S \to S$, while the unit is given by the composite $R \xr{\varphi} S \xr{\nu} S \otimes_S S$, where $\nu$ denotes the inverse of the multiplication map $S \otimes_S S \xr{\cong} S$.
\end{example}

\begin{punto}{\bf Mates.}
Recall from \cite{KS} that for adjunctions $(\eta, \e: f \dashv g: B \to A)$ and $(\eta', \e': f' \dashv g': B \to A)$ in $\B$,
there is a bijection between 2-cells $$\sigma : f \to f' \,\,\, \text{and}  \,\,\, \ol{\sigma}:g' \to g,$$
where $\ol{\sigma}$ is obtained as the composite
$$g' \stackrel{l^{-1}_{\!g'}}\simeq 1_A \w g'\xr{\eta \w g'} (g\w f)\w g' \xr{(g\w \sigma)\w g'}(g\w f')\w g'\stackrel{\alpha}\simeq
g\w (f'\w g')\xr{g\w \e'}g\w 1_B \stackrel{r_{g}}\simeq g$$
and $\sigma$ is given as the composite
$$f \stackrel{r_{f}}\simeq f \w 1_A \xr{f \w \eta'} f \w (g'\w f') \xr{ f \w (\ol{\sigma}\w f')} f \w (g\w f')\stackrel{\alpha^{-1}}\simeq
(f\w g)\w f')\xr{\e \w f'}  1_B \w f' \stackrel{l^{-1}_{\!f'}}\simeq f'.$$

In this situation, $\sigma$ and $\ol{\sigma}$ are called \emph{mates} under the given adjunctions and this is
denoted by $\sigma \dashv \ol{\sigma}$.

\begin{lemma} \label{mates} If $\sigma \dashv \ol{\sigma}$ under adjunctions $(f \dashv g: B \to A)$ and $(f' \dashv g': B \to A)$. Then
 $\sigma$ is an isomorphism iff  \,\,$\ol{\sigma}$ \,is.
\end{lemma}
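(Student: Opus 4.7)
The plan is to deduce the lemma from the contravariant functoriality of the mate correspondence with respect to vertical composition of 2-cells. Concretely, for three 1-cells $f, f', f'' : A \to B$ with right adjoints $g, g', g'' : B \to A$ and vertically composable 2-cells $\sigma : f \to f'$ and $\tau : f' \to f''$, one has $\ol{1_f} = 1_g$ and $\ol{\tau\,\sigma} = \ol{\sigma}\,\ol{\tau}$. This is classical (see \cite{KS}) and is proved by a direct diagram chase in $[A,B]$ and $[B,A]$: expanding the definition of $\ol{\tau\,\sigma}$ and splicing in the triangle identities \eqref{adj1}--\eqref{adj2} for $f'\dashv g'$ collapses the middle unit-counit pair, and associativity coherence then rearranges the result into the composite $\ol{\sigma}\,\ol{\tau}$.

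For the forward implication, suppose $\sigma : f \to f'$ is invertible, with inverse $\sigma^{-1} : f' \to f$, and form its mate $\ol{\sigma^{-1}} : g \to g'$ with respect to the adjunctions $f' \dashv g'$ and $f \dashv g$ (taken in this order). Contravariant functoriality then gives
\[
\ol{\sigma}\cdot\ol{\sigma^{-1}} \;=\; \ol{\sigma^{-1}\cdot\sigma} \;=\; \ol{1_f} \;=\; 1_g, \qquad \ol{\sigma^{-1}}\cdot\ol{\sigma} \;=\; \ol{\sigma\cdot\sigma^{-1}} \;=\; \ol{1_{f'}} \;=\; 1_{g'},
\]
so $\ol{\sigma^{-1}}$ is a two-sided inverse of $\ol{\sigma}$ in the hom-category $[B,A]$.

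For the converse, I invoke that the mate construction establishes a genuine bijection between 2-cells $f\to f'$ and 2-cells $g'\to g$, as is apparent from the two explicit mutually inverse formulas displayed just before the statement of the lemma. Assume $\ol{\sigma}$ is invertible with inverse $\rho: g \to g'$. By surjectivity of the mate correspondence associated with $f' \dashv g'$ and $f \dashv g$, there exists a unique 2-cell $\widetilde{\rho} : f' \to f$ whose mate is $\rho$. Contravariant functoriality then yields
\[
\ol{\sigma\cdot\widetilde{\rho}} \;=\; \ol{\widetilde{\rho}}\cdot\ol{\sigma} \;=\; \rho\cdot\ol{\sigma} \;=\; 1_{g'} \;=\; \ol{1_{f'}}, \qquad \ol{\widetilde{\rho}\cdot\sigma} \;=\; \ol{\sigma}\cdot\ol{\widetilde{\rho}} \;=\; \ol{\sigma}\cdot\rho \;=\; 1_g \;=\; \ol{1_f},
\]
so injectivity of the mate correspondence gives $\sigma\cdot\widetilde{\rho} = 1_{f'}$ and $\widetilde{\rho}\cdot\sigma = 1_f$, proving $\sigma$ invertible.

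The main obstacle is the verification of contravariant functoriality in the first paragraph: although purely formal, it requires several invocations of the associativity coherence in $\mathbb{B}$ together with both triangle identities, and is the only computation of substance in the argument. Once it is in hand, each implication of the lemma reduces to a three-line manipulation in the appropriate hom-category, and the argument is entirely symmetric between $\sigma$ and $\ol{\sigma}$.
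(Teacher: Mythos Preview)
The paper does not actually supply a proof of this lemma; it is stated as a standard fact about mates, with the bijection itself recalled from \cite{KS} immediately beforehand. Your argument is correct and is precisely the standard one: contravariant functoriality of the mate correspondence under vertical composition, together with the fact that identities go to identities, forces $\ol{\sigma^{-1}}$ to be a two-sided inverse of $\ol{\sigma}$, and the bijection lets you run the same argument backwards. A minor economy for the converse: since the two formulas displayed before the lemma are mutually inverse, the passage $\ol{(-)}$ from $2$-cells $g' \to g$ back to $2$-cells $f \to f'$ is itself a mate correspondence (for the adjunctions read in the opposite sense), so you could simply apply your forward implication with the roles of $\sigma$ and $\ol{\sigma}$ interchanged, avoiding the explicit appeal to injectivity and surjectivity.
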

\end{punto}

\section{Invertible cells associated to an adjunction}
Let $A$ be an object of a bicategory $\mathbb{B}$. We call a (co)algebra in
the monoidal category $[A,A]$ an $A$-\emph{\emph{(}co\emph{)}ring} and write $A\texttt{-ring} = \verb"Alg"([A,A]) $ (resp. $A\texttt{-cor} = \verb"Coalg"([A,A])$) for the category
of $A$-(co)rings.

Any 1-cell with a right adjoint generates a ring as well as a coring as follows. If $\eta_f, \e_f :
f \dashv f^* : B \to A$ is an adjunction in $\mathbb{B}$, then the triple
\begin{equation}\label{genring}
\mathcal{S}_f=(f^* \w f, m_f ,\eta_f ),
\end{equation}
where $m_f$ is the composite $$(r_{f^*} \w f  )\cdot((f^* \w \e_f) \w f)\cdot( \alpha_{f^*, f,
f^*}\w f)\cdot(\alpha_{f^* \w f,f^*, f })^{-1}:(f^* \w f )\w (f^* \w f)\to f^* \w f, $$ is an $A$-ring, while the triple
$$\mathfrak{C}_f =(f \w f^*, \delta_f , \e_f), $$ where $\delta_f $ is the composite
$$(\alpha_{f^* \w f,f^*, f })\cdot (\alpha_{f, f^*, f}^{-1}\w f^*)
\cdot((f \w \eta_f) \w f^* )\cdot(r_f^{-1}\w f^*):f \w f^* \to (f\w f^*)\w (f \w f^*),$$ is a $B$-coring.

Since $\mathcal{S}_f$ is an algebra in the monoidal category $[A,A]$, one has the sets $\mathfrak{I}^l_{[A,A]}(\mathcal{S}_f)$
and $\mathfrak{I}^r_{[A,A]}(\mathcal{S}_f)$.
Recall from \cite[Remark 4.2]{GM} that for any monomorphic 2-cell $i_h:h\to f^*\w f$,
$$\xi^l_{i_h}=(f^* \w \xi_{i_h})\cdot\alpha_{f^*,\,f,\,h}$$
and
$$\xi^r_{i_h}=(\xi^*_{i_h} \w f)\cdot\alpha^{-1}_{h,\,f^*,\,f},$$
where $\xi_{i_h}$ and $\xi^*_{i_h}$ are the composites
$$\xi_{i_h} :f \w h \xr{f \w i_h} f\w (f^* \w f) \xr{\alpha^{-1}_{f, f^*,f}}(f \w f^*) \w f \xr{\e_f \w f}1_B \w f \xr{l_f}f$$
and
$$\xi^*_{i_h} :h \w f^* \xr{ i_h \w f^*} (f^*\w f) \w f^* \xr{\alpha_{f^*, f,f^*}}f^*\w (f \w f^*) \xr{ f^*\w \e_f}f^* \w 1_B \xr{r_{f^*}}f^*,$$
respectively.

\bigskip

We write  $\mathfrak{I}^{A,\,l}_f$ (resp. $\mathfrak{I}^{A,r}_f$) for the subset of $\mathfrak{I}^l_{[A,A]}(\mathcal{S}_f)$ (resp. $\mathfrak{I}^r_{[A,A]}(\mathcal{S}_f)$) determined by those subobjects $[(h, i_h)]$ with $h$ invertible.

\begin{proposition}\label{lifting} Let $\eta_f, \e_f :f \dashv f^* : B \to A$ be an adjunction in $\mathbb{B}$
such that $\eta_f$ is monomorphic in $[A,A]$ and $h: A \to A$ be an invertible 1-cell. If
there is an isomorphism $\sigma : f\w h\to f$ in $[A,B]$, then $[(h,i_h)] \in \mathfrak{I}^{A,\,l}_f,$
where $i_h$ is the composite $h \xr{\eta_f \w h}f^* \w f \w \,h \xr{f^* \w \sigma} f^* \w f$ \footnote{
For simplicity of exposition we sometimes treat $\B$ as a 2-category which is justified by the coherence theorem (see \cite{McP}) asserting
that every bicategory is biequivalent to a 2-category. Consequently, we sometimes omit brackets in the horisontal compositions and suppress the associativity constraints $\alpha$ and the unitality constraints $l$ and $r$.}.
\end{proposition}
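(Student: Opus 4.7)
The plan is to verify the two defining conditions for membership of $[(h, i_h)]$ in $\mathfrak{I}^{A,\,l}_f$: that $i_h$ is a monomorphism in $[A,A]$, and that the 2-cell $\xi^l_{i_h} : (f^* \w f) \w h \to f^* \w f$ is an isomorphism. Invertibility of $h$ is given by hypothesis, so only these two verifications remain.

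For monicity, observe that since $h$ is invertible, Remark \ref{equivalence} ensures that the whiskering functor $- \w h : [A,A] \to [A,A]$ is an equivalence of categories, hence preserves monomorphisms. Applied to the hypothesis that $\eta_f$ is monomorphic, this yields that $\eta_f \w h$ is monomorphic. The 2-cell $i_h$ is the composite of $\eta_f \w h$ with the isomorphisms $l_h^{-1}$, $\alpha_{f^*,f,h}$ and $f^* \w \sigma$; hence it is a composite of a monomorphism with isomorphisms, and is therefore a monomorphism.

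For the invertibility of $\xi^l_{i_h}$, the formula $\xi^l_{i_h} = (f^* \w \xi_{i_h}) \cdot \alpha_{f^*,f,h}$ recalled before the proposition reduces, by functoriality of $f^* \w -$ and invertibility of the associator, to checking that $\xi_{i_h} : f \w h \to f$ is itself an isomorphism. Substituting $i_h = (f^* \w \sigma) \cdot (\eta_f \w h)$ into the definition of $\xi_{i_h}$ and suppressing coherence constraints as licensed by the footnote, one finds
$$\xi_{i_h} = (\e_f \w f) \cdot (f \w f^* \w \sigma) \cdot (f \w \eta_f \w h).$$
The middle-four interchange law rewrites $(\e_f \w f) \cdot (f \w f^* \w \sigma)$ as $(1_B \w \sigma) \cdot (\e_f \w f \w h)$, and the remaining factor $(\e_f \w f \w h) \cdot (f \w \eta_f \w h)$ is the whiskering by $h$ of the triangle identity $(\e_f \w f) \cdot (f \w \eta_f) = \mathrm{id}_f$, hence equals $\mathrm{id}_{f \w h}$. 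Thus $\xi_{i_h}$ agrees with $\sigma$ up to the unit constraint $l_f$, and is therefore an isomorphism. The one mildly delicate point in executing the plan is the bookkeeping of the suppressed associators and unitors, which is exactly what the strictification convention of the footnote is intended to legitimise.
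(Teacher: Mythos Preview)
Your proof is correct and follows essentially the same route as the paper's. The only difference is one of packaging: where you verify $\xi_{i_h} = \sigma$ by an explicit interchange-and-triangle-identity computation, the paper observes in one line that $i_h = (f^* \w \sigma)\cdot(\eta_f \w h)$ is precisely the transpose of $\sigma$ under the adjunction $[A,f] = f \w - \dashv f^* \w - = [A,f^*]$ (whose unit is $\eta_f \w -$), so that transposing back gives $\xi_{i_h} = (\e_f \w f)\cdot(f \w i_h) = \sigma$ directly. Your explicit calculation is exactly what underlies that transpose identity, so the two arguments are the same in substance.
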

\begin{proof}Suppose that $h$ is invertible and that there is an isomorphism $\sigma : f\w h \to f$ in $[A,B]$.
Since $\eta_f$ is assumed to be monomorphic in $[A,A]$, it follows from Remark \ref{equivalence} that
$h \xr{\eta\!_f \w h}f^* \w f \w \,h$ is monomorphic in $[A,A]$. Then, since $\sigma$ is an isomorphism, $i_h$
must be a monomorphism too. Now, since the functor $f^* \w-:[A,B]\to [A,A]$ is right adjoint for the functor
$ f\w-:[A,A]\to [A,B]$ with $\eta_f \w -$ as unit and $\e_f \w -$ as counit, it follows that
$\sigma=(\e_f \w f) \cdot(f\w i_h)=\xi_{i_h}$. Therefore, $\xi^l_{i_h}=f^* \w \xi_{i_h}=f^* \w \,\sigma$ is an
isomorphism too and hence $[(h, i_h)]\in \mathfrak{I}^l_{[A,\,A]}(\mathcal{S}_f)$.
\end{proof}

\begin{proposition}\label{lifting.1} In the situation of Proposition \ref{lifting}, suppose that $h^*$ is a pseudo-inverse
of $h$. Then there is a monomorphic 2-cell $i_{h^*}:h^* \to f^* \w f$ such that $[(h^*, i_{h^*})]\in \mathfrak{I}^{A,r}_f$.
\end{proposition}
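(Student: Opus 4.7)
The plan is to mirror the construction from Proposition \ref{lifting}, but on the right-hand side. Since $h$ is invertible with pseudo-inverse $h^*$, the pair $h \dashv h^*$ is an adjoint equivalence; composing it with $f \dashv f^*$ gives an adjunction $f \w h \dashv h^* \w f^* : B \to A$ in $\mathbb{B}$. Taking the mate of the isomorphism $\sigma : f \w h \to f$ under this composite adjunction and $f \dashv f^*$ yields a 2-cell $\ol \sigma : f^* \to h^* \w f^*$ in $[B, A]$, which is an isomorphism by Lemma \ref{mates}. My candidate is
\begin{equation*}
i_{h^*} \,:\; h^* \xr{h^* \w \eta_f} h^* \w f^* \w f \xr{\ol \sigma^{-1} \w f} f^* \w f,
\end{equation*}
i.e., the image of $\ol \sigma^{-1}$ under the hom-bijection of the pre-composition adjunction $-\w f^* \dashv -\w f : [A, A] \rightleftarrows [B, A]$ (the right-handed analogue of the adjunction used in Proposition \ref{lifting}).

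That $i_{h^*}$ is a monomorphism is checked exactly as in Proposition \ref{lifting}: since $h^*$ is invertible, the functor $h^* \w - : [A, A] \to [A, A]$ is an equivalence and hence preserves monomorphisms (Remark \ref{equivalence}), so $h^* \w \eta_f$ is monomorphic; its composite with the isomorphism $\ol \sigma^{-1} \w f$ remains monomorphic.

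It then suffices to prove $\xi^*_{i_{h^*}} = \ol \sigma^{-1}$ (up to the unitor $r_{f^*}$), for then $\xi^r_{i_{h^*}} = (\xi^*_{i_{h^*}} \w f) \cdot \alpha^{-1}_{h^*, f^*, f}$ is an isomorphism and hence $[(h^*, i_{h^*})] \in \mathfrak{I}^{A,r}_f$. Unfolding the definition (and suppressing associators and unitors),
\begin{equation*}
\xi^*_{i_{h^*}} = (f^* \w \e_f) \cdot (\ol \sigma^{-1} \w f \w f^*) \cdot (h^* \w \eta_f \w f^*).
\end{equation*}
By the interchange law $\ol \sigma^{-1}$ slides past the 2-cells that act on the $f \w f^*$ factor, yielding
\begin{equation*}
\xi^*_{i_{h^*}} = \ol \sigma^{-1} \cdot \bigl(h^* \w \bigl[(f^* \w \e_f) \cdot (\eta_f \w f^*)\bigr]\bigr) = \ol \sigma^{-1},
\end{equation*}
since the bracketed composite equals $1_{f^*}$ by the triangle identity for $f \dashv f^*$. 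The one place that needs care is identifying the correct pair of adjunctions for the mate and verifying that it is $\ol \sigma^{-1}$ (and not $\ol \sigma$) that should appear in $i_{h^*}$, so that $\xi^*$ rather than $\xi^l$ becomes an isomorphism; once this is in place, the rest is a routine dual of the proof of Proposition \ref{lifting}.
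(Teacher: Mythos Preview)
Your argument is correct and is essentially the paper's own proof, carried out more explicitly. The paper obtains the isomorphism $\tau : h^* \w f^* \simeq f^*$ by invoking uniqueness of adjoints for $f \w h \simeq f$, which is precisely your mate $\ol\sigma^{-1}$; it then defines $i_{h^*}$ by the same formula and appeals to the right-handed adjunction $-\w f^* \dashv -\w f : [B,A]\to [A,A]$ to conclude, exactly as you do when you verify $\xi^*_{i_{h^*}} = \ol\sigma^{-1}$ via the triangle identity.
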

\begin{proof} Composing the adjunction $f \dashv f^*$ with $h \dashv h^*$  yields an adjunction $f\w h \dashv h^* \w f^*$.
Since $f \w h \simeq f$ and since adjoints are unique up to unique isomorphism, one has an isomorphism $\tau:h^* \w f^* \simeq f^*.$
Now, if we take $i_{h^*}$ to be the composite $i_{h^*}:h^* \xr{h^* \w \,\eta_f }h^* \w f^* \w f \xr{\tau \w f} f^* \w f$, then
the result is proved in exactly the same way as Proposition \ref{lifting}, but this time using the adjunction
$$-\w f^* \dashv -\w f:[B,A]\to [A,A].$$
\end{proof}

\begin{proposition}\label{conjugate} Let $\eta, \e: f \dashv f^* : B \to A$ be an adjunction and
$(\eta_h, \e_h : h \dashv h^* :A \to A)$ be an adjoint equivalence in $\B$.
Then for any 2-cell $i_h: h \to f^* \w f$,  the following are equivalent:
\begin{itemize}
  \item [(i)]$\xi_{i_h}:f \w h  \to f$ is an isomorphism;
  \item [(ii)] $\xi^l_{i_h}: (f^* \w f)\w h \to f^* \w f$ is an isomorphism;
  \item [(iii)] $\xi^*_{i_h}:h\w f^* \to f^*$ is an isomorphism;
  \item [(iv)] $\xi^r_{i_h}: h \w (f^* \w f) \to f^* \w f$ is an isomorphism.
\end{itemize}
Moreover, $i_h$ is monomorphism in $[A,A]$ provided any (and hence all) of the above conditions hold.
\end{proposition}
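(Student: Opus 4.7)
The strategy rests on two principal tools: the mate bijection of Lemma~\ref{mates}, under which a 2-cell is an isomorphism if and only if its mate is, and Remark~\ref{equivalence}, which ensures that whiskering by $h$ or $h^*$ yields an equivalence of hom-categories and therefore preserves and reflects both isomorphisms and monomorphisms.

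The implications (i)$\Rightarrow$(ii) and (iii)$\Rightarrow$(iv) follow immediately from the given identities $\xi^l_{i_h} = (f^* \w \xi_{i_h}) \cdot \alpha_{f^*,f,h}$ and $\xi^r_{i_h} = (\xi^*_{i_h} \w f) \cdot \alpha^{-1}_{h,f^*,f}$, combined with the functoriality of whiskering. For the converses (ii)$\Rightarrow$(i) and, symmetrically, (iv)$\Rightarrow$(iii), I would use the triangle identities for $f \dashv f^*$: from $\xi^l_{i_h}$ iso, applying $f \w -$ makes $f \w f^* \w \xi_{i_h}$ iso, and one then defines a candidate two-sided inverse
\[
\chi \;:=\; (\e_f \w f \w h) \cdot (f \w f^* \w \xi_{i_h})^{-1} \cdot (f \w \eta_f) \colon f \to f \w h,
\]
and checks $\xi_{i_h} \cdot \chi = 1_f$ using naturality of $\e_f$ together with the triangle identity $(\e_f \w f) \cdot (f \w \eta_f) = 1_f$, and $\chi \cdot \xi_{i_h} = 1_{f \w h}$ using the interchange law and naturality of $\eta_f$.

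For the equivalence (i)$\Leftrightarrow$(iii), I would invoke Lemma~\ref{mates}. Composing $h \dashv h^*$ with $f \dashv f^*$ produces the adjunction $f \w h \dashv h^* \w f^*$; under this composite adjunction paired with the original $f \dashv f^*$, the 2-cell $\xi_{i_h} \colon f \w h \to f$ admits a mate $\bar{\xi} \colon f^* \to h^* \w f^*$, and Lemma~\ref{mates} yields $\xi_{i_h}$ iso iff $\bar{\xi}$ iso. Independently, the equivalence adjunction $h \w - \dashv h^* \w -$ on $[B, A]$ provides an adjunction bijection under which $\xi^*_{i_h}$ corresponds to a 2-cell $f^* \to h^* \w f^*$; a direct diagrammatic comparison of the defining formulas shows that this transpose coincides with $\bar{\xi}$. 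Since $h \w -$ is an equivalence by Remark~\ref{equivalence}, it preserves and reflects iso, completing the chain $\xi_{i_h}$ iso $\Leftrightarrow$ $\bar{\xi}$ iso $\Leftrightarrow$ $\xi^*_{i_h}$ iso.

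Finally, the monomorphism assertion rests on the factorisation $i_h = \xi^l_{i_h} \cdot (\eta_f \w h)$, which follows from the left-unit law of the $A$-ring $\mathcal{S}_f = (f^* \w f, m_f, \eta_f)$: under any of (i)--(iv), $\xi^l_{i_h}$ is an isomorphism, while $\eta_f \w h$ inherits monomorphicity from $\eta_f$ via Remark~\ref{equivalence}, whence $i_h$ is monic. I expect two principal obstacles: rigorously establishing $\chi \cdot \xi_{i_h} = 1_{f \w h}$ in the converse (ii)$\Rightarrow$(i), which requires careful interchange-law manipulations, and the explicit identification of $\xi^*_{i_h}$'s adjoint transpose with the mate $\bar{\xi}$ in (i)$\Leftrightarrow$(iii), which amounts to a delicate unpacking of the composite adjunction's unit and counit against the formula from Lemma~\ref{mates}.
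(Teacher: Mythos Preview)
Your overall architecture matches the paper's: both establish (i)$\Leftrightarrow$(iii) via the mate correspondence under the composite adjunction $f\w h \dashv h^*\w f^*$ and the original $f\dashv f^*$, and both derive the monomorphicity of $i_h$ from the factorisation $i_h=\xi^l_{i_h}\cdot(\eta_f\w h)$ (the paper packages this as an appeal to Proposition~\ref{lifting}). For (i)$\Leftrightarrow$(iii) the paper is more explicit than you are: it computes the mate directly as $\overline{\xi}_{i_h}=(h^*\w\xi^*_{i_h})\cdot(\eta_h\w f^*)$, which makes the equivalence with (iii) immediate since $\eta_h$ and $h^*$ are invertible. Your route through the transpose of $\xi^*_{i_h}$ under $h\w-\dashv h^*\w-$ ends at the same formula, but the paper's direct computation avoids the extra comparison step.

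There is, however, a genuine gap in your treatment of (ii)$\Rightarrow$(i). Your candidate inverse $\chi$ does satisfy $\xi_{i_h}\cdot\chi=1_f$ exactly as you say, but the identity $\chi\cdot\xi_{i_h}=1_{f\w h}$ does \emph{not} follow from interchange and naturality. Writing $F=f\w-$ and $U=f^*\w-$, what you need is
\[
(f\w\eta_f)\cdot\xi_{i_h}\;=\;(f\w f^*\w\xi_{i_h})\cdot(f\w\eta_f\w h),
\]
i.e.\ that $\xi_{i_h}\colon F(h)\to F(1_A)$ is a morphism of cofree $FU$-coalgebras. Interchange only gives you $(f\w\eta_f)\cdot\xi_{i_h}=(\xi_{i_h}\w f^*\w f)\cdot((f\w h)\w\eta_f)$, with the whiskering on the \emph{right} of $\xi_{i_h}$, not the left; these are different 2-cells and do not cancel against $(f\w f^*\w\xi_{i_h})^{-1}$. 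Concretely, take $\mathbb{B}=\mathbf{Cat}$, $f=-\times S$ on $\mathbf{Set}$ with $|S|\ge 2$, and $h=1$: any $\xi_{i_h}=g\colon f\Rightarrow f$ corresponds to a map $\sigma\colon S\to S$, and the displayed equation forces $\sigma=1_S$. So your method proves $\xi_{i_h}$ is split epi but cannot by itself upgrade this to an isomorphism; you would still need $f^*\w-$ to reflect isomorphisms (or at least to be faithful), which is not assumed. The paper does not attempt a self-contained argument here either: it simply invokes \cite[Remark~4.2]{GM} for the equivalences (i)$\Leftrightarrow$(ii) and (iii)$\Leftrightarrow$(iv).
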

\begin{proof} Since (i) is equivalent to (ii) and (iii) is equivalent to (iv) by \cite[Remark 4.2]{GM} and its dual,
we have only to show that (i) and (iii) are equivalent.

Note first that composing the adjunction  $(\eta_f, \e_f:f \dashv f^*)$ with $(\eta_h, \e_h: h \dashv h^*)$
yields an adjunction $(\ol{\eta}, \ol{\e}:f \w \,h \dashv h^* \w f^*)$, where $\ol{\eta}$ and $\ol{\e}$  are the
composites $$1_A \xr{\eta_h}h^* \w \,h \xr{h^* \w \eta_f \w h}h^* \w f^* \w f \w  \,h$$ and
$$f \w \, h \w \,h^* \w f^* \xr{f \w \e_h \w f^*}f \w f^*  \xr{\e_f } 1_B,$$ respectively.

Consider now the composite
$$\overline{\xi}_{i_h}:f^* \xr{\ol{\eta} \w f^*} h^* \w f^* \w f \w  \,h\, \w f^* \xr{h^* \w f^* \w \xi_{i_h} \w f^*}
h^* \w f^* \w f \w f^* \xr{ h^* \w f^* \w \e_f } h^* \w f^*,$$ which is the mate of $\xi_{i_h}$ under the adjunctions
$(f \w \,h \dashv h^* \w f^*)$ and $(f \dashv f^*)$. A straightforward calculation, using the expression for $\ol{\eta}$
and $\xi_{i_h}$, shows that $\overline{\xi}_{i_h}$ is the composite
$$f^* \xr{\eta_h \w f^*}h^* \w h \w f^* \xr{h^* \w i_h \w f^*} h^* \w f^* \w f \w f^* \xr{ h^* \w f^* \w \e_f}  h^* \w f^*,$$
and therefore
$$
\overline{\xi}_{i_h}=(h^* \w \xi^*_{i_h})\cdot (\eta_h \w f^*),
$$ implying -- since both $h^*$ and $\eta_h$ are invertible 1-cells
-- that $\overline{\xi}_{i_h}$ is an isomorphism iff $\xi^*_{i_h}$ is. In the light of
Lemma \ref{mates} one now concludes that (i) and (iii) are equivalent.

Finally, each of the conditions (i)-(iv) implies that $\xi^l_{i_h}$ is an isomorphism, and then $i_h$ is a monomorphism
by Proposition \ref{lifting}. This completes the proof.

\end{proof}

 \begin{proposition} \label{left=right} $\mathfrak{I}^{A,\,l}_f=\mathfrak{I}^{A,\,r}_f$.
\end{proposition}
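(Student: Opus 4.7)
The plan is to derive this equality directly from Proposition \ref{conjugate}, using the fact recalled just before Remark \ref{equivalence} that any invertible 1-cell in $\mathbb{B}$ is part of an adjoint equivalence. The symmetric formulation of Proposition \ref{conjugate} already supplies, for any 2-cell $i_h : h \to f^* \w f$ whose source $h$ fits into an adjoint equivalence, the equivalence of conditions (ii) and (iv), which is precisely the assertion we need: $\xi^l_{i_h}$ is an isomorphism if and only if $\xi^r_{i_h}$ is. Moreover, Proposition \ref{conjugate} also tells us that under any of these equivalent conditions the 2-cell $i_h$ is automatically monomorphic, so representatives pass cleanly between the two collections of subobjects.

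Concretely, I would argue as follows. Take $[(h, i_h)] \in \mathfrak{I}^{A,l}_f$. By definition $h$ is invertible, so by the theorem cited above we may choose a pseudo-inverse $h^*$ together with a unit $\eta_h$ and counit $\e_h$ making $(\eta_h, \e_h : h \dashv h^* : A \to A)$ an adjoint equivalence. Since $i_h$ is a monomorphism with $\xi^l_{i_h}$ an isomorphism, condition (ii) of Proposition \ref{conjugate} is satisfied, hence condition (iv) holds, i.e.\ $\xi^r_{i_h}$ is an isomorphism. Thus $[(h, i_h)] \in \mathfrak{I}^r_{[A,A]}(\mathcal{S}_f)$, and since $h$ is invertible, $[(h,i_h)] \in \mathfrak{I}^{A,r}_f$. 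The reverse inclusion $\mathfrak{I}^{A,r}_f \subseteq \mathfrak{I}^{A,l}_f$ is obtained by the symmetric argument: starting from an invertible $h$ with $\xi^r_{i_h}$ an isomorphism, condition (iv) of Proposition \ref{conjugate} forces condition (ii), giving $[(h,i_h)] \in \mathfrak{I}^{A,l}_f$.

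There is no real obstacle here, since the heavy lifting has already been done in Proposition \ref{conjugate}; the only point worth emphasising is that the invertibility built into the definition of $\mathfrak{I}^{A,l}_f$ and $\mathfrak{I}^{A,r}_f$ is exactly what is needed to invoke that proposition (namely, to extend $h$ to an adjoint equivalence), and that the monomorphicity of $i_h$ required by the definitions of these sets is automatic from Proposition \ref{conjugate} once (i)--(iv) are in force.
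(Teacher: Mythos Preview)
Your proof is correct and follows essentially the same route as the paper: both arguments reduce the equality to Proposition~\ref{conjugate}, using that an invertible $1$-cell can be upgraded to an adjoint equivalence. The paper passes through condition~(i) via \cite[Remark~4.2]{GM} before invoking Proposition~\ref{conjugate}, while you appeal directly to the equivalence (ii)$\Leftrightarrow$(iv); this is a minor streamlining, not a different idea.
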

\begin{proof} By symmetry, it suffices to prove the inclusion $\mathfrak{I}^{A,\,l}_f \subseteq\mathfrak{I}^{A,\,r}_f$. To this end
consider an arbitrary element $[(h, i_h)] \in\mathfrak{I}^{A,\,l}_f$. Since $h $ is an invertible 1-cell, we need only show that
$[(h, i_h)] \in \mathfrak{I}^r_{[A,A]}(\mathcal{S}_f)$. Since $[(h, i_h)] \in \mathfrak{I}^{A,\,l}_f \subseteq\mathfrak{I}^l_{[A,A]}(\mathcal{S}_f)$,          the 2-cell $\xi^l_{i_h}:  (f^* \w f) \w h  \to f^* \w f$ is an isomorphism, and then $\xi_{i_h}:f \w h \to h$ is also an
isomorphism by Remark \cite[Remark 4.2]{GM}. Applying now Proposition \ref{conjugate}
gives that both $\xi^*_{i_h}:h \w f^* \to f^*$ and $\xi^r_{i_h}=\xi^*_{i_h}\w f:h \w (f^*\w f) \to f^*\w f$
are isomorphisms. Thus, $[(h, i_h)] \in \mathfrak{I}^r_{[\A,\A]}(\mathcal{S}_f)$, and hence $[(h, i_h)] \in\mathfrak{I}^{A,\,r}_f$.
\end{proof}

\begin{definition} We write $\mathfrak{I}^A_f$ to denote either $\mathfrak{I}^{A,\,l}_f$ or $\mathfrak{I}^{A,\,r}_f$.
\end{definition}

\section{Exact sequences of groups related to adjunctions in bicategories}\label{SucesionesExactas}
Fix an adjunction $\eta_f, \e_f :f \dashv f^* : B \to A$ in $\mathbb{B}$. In this section we suppose,
with the exception of Subsection \ref{duality}, that $\mathbb{B}$ is a bicategory such that each hom-category admits
finite limits and images\footnote{Indeed, we need this assumption only for the hom-category $[A,A]$.} and that the
2-cell $\eta_f: 1_A \to f^* \w f$ is a monomorphism in $[A,A]$. In this case, $\textbf{Sub}_{[A,A]}(f^* \w f)$ has a
monoid structure formed as in \cite[Proposition 3.2]{GM}, and  throughout this paper, when considering
$\textbf{Sub}_{[A,A]}(f^* \w f)$ as a monoid, we always mean this monoid structure.

\subsection{Automorphisms and invertible subobjects}\label{autoinvert}

One can easily verify that the assignment taking a 2-cell $s:1_A \to 1_A$ to the composite
$$f^* \xr{l_{f^*}^{-1}} 1_A \w f^* \xr {s \w f^*} 1_A \w f^* \xr{l_{f^*}}  f^*,$$
yields a monoid morphism
$$\varpi:[A,A](1_A, 1_A) \to [B,A](f^*,f^*),$$
which gives, by restriction, a homomorphism of groups
 $$\varpi_0:\text{\textbf{Aut}}_{[A,\,A]}(1_A) \to \text{\textbf{Aut}}_{[B,\,A]}(f^*).$$
 between the groups of automorphisms of the objects $1_A$ and $f^*$, respectively.

\begin{proposition}\label{mono} 
The map $\varpi_0$ is a monomorphism of  groups.
\end{proposition}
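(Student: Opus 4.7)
My plan is to prove injectivity of the already-established group homomorphism $\varpi_0$. So fix an element $s \in \textbf{Aut}_{[A,A]}(1_A)$ with $\varpi_0(s) = 1_{f^*}$; the task is to conclude $s = 1_{1_A}$.

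Unravelling the definition of $\varpi_0$, the hypothesis $\varpi_0(s) = 1_{f^*}$ is equivalent, via conjugation by the left unitor $l_{f^*}$, to the identity $s \w 1_{f^*} = 1_{1_A \w f^*}$ between 2-cells in $[B,A]$. Whiskering once more on the right with $f$ and invoking the associator, this gives $s \w 1_{f^* \w f} = 1_{1_A \w (f^* \w f)}$; transporting along the left unitor $l_{f^* \w f}$ then produces a 2-cell $\bar s : f^* \w f \to f^* \w f$ which is again the identity.

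The heart of the argument is the interchange law in $\mathbb{B}$ applied to the 2-cells $s$ and $\eta_f$, both regarded as living in $[A, A]$. Computing the horizontal composite $s \w \eta_f : 1_A \w 1_A \to 1_A \w (f^* \w f)$ in two ways---once as $(1_{1_A} \w \eta_f) \cdot (s \w 1_{1_A})$ and once as $(s \w 1_{f^* \w f}) \cdot (1_{1_A} \w \eta_f)$---and simplifying with the unitors, one obtains the key identity
\[
\eta_f \cdot s \;=\; \bar s \cdot \eta_f.
\]
This step is legitimate by the coherence theorem invoked in the footnote to Proposition \ref{lifting}. Combined with $\bar s = 1_{f^* \w f}$, the formula reduces to $\eta_f \cdot s = \eta_f$, and the standing hypothesis of Section \ref{SucesionesExactas} that $\eta_f$ is a monomorphism in $[A, A]$ then forces $s = 1_{1_A}$, as required.

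The only delicate point, and hence the ``main obstacle'', is the coherence bookkeeping in the interchange step; however, since one may model $\mathbb{B}$ by a biequivalent strict 2-category, the manipulation of associators and unitors presents no genuine difficulty.
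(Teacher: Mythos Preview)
Your proof is correct and follows essentially the same route as the paper's: from $\varpi_0(s)=1_{f^*}$ deduce $s \w f^* = 1_{1_A} \w f^*$, whisker with $f$, then use naturality/interchange to obtain $\eta_f \cdot s = \eta_f$ and conclude by the standing monomorphism hypothesis on $\eta_f$. The paper compresses your interchange computation into the phrase ``direct calculation then shows that $\eta_f \cdot 1_{1_A} = \eta_f \cdot s$'', but the content is identical.
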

\begin{proof} If $s \in \text{\textbf{Aut}}_{[A,\,A]}(1_A)$ is such that $\varpi_0(s)=1_{f^*}$,
then $1_{f^*}=l_{f^*}\cdot (s \w f^*)\cdot l_{f^*}^{-1}$ and hence $l_{f^*}= l_{f^*}\cdot (s \w f^*)$.
But since $l_{f^*}= l_{f^*}\cdot (1\!_{1\!_A} \w f^*)$ and since $l_{f^*}$ is invertible, it follows that
$1\!_{1\!_A} \w f^*=s \w f^*$ and hence $1\!_{1\!_A} \w (f^*\w f)=s \,\w (f^*\w f)$. Direct calculation then shows that
$\eta_f \cdot 1\!_{1\!_A}=\eta_f \cdot s$. Now, since $\eta_f$ is
assumed to be monomorphic, the map $$[A,A](1_A, \eta_f):[A,A](1_A,1_A)\to [A,\,A][1_A, f^*\w f]$$ is injective, implying
that $1\!_{1\!_A}=s$. Thus, $\varpi_0$ is a monomorphism of  groups.
\end{proof}

For any $\lambda \in \text{\textbf{Aut}}_{[B,\,A]}(f^*)$,
form the pullback

\begin{equation}\label{ex}
\xymatrix{ f_\lambda \ar[d]_{p_\lambda} \ar[r]^-{i_\lambda}& f^* \w
f \ar[d]^{\lambda \w f}\\
1_A \ar[r]_-{\eta_f}& f^* \w f\,.}
\end{equation}

Since, by hypothesis, $\eta_f$ is a monomorphism in $[A,\,A]$, so too is $i_\lambda$, and thus
$(f_\lambda, i_\lambda)$ represents an element of $\text{Sub}_{[A,\,A]}(f\w f^*)$, implying
$-$ since pullbacks are unique up to isomorphism $-$ that the assignment
$\lambda \longmapsto [(f_\lambda, i_\lambda)]$  yields a map
$\mathcal{D}_f:\text{\textbf{Aut}}_{[B,\,A]}(f^*) \to
\text{\textbf{Sub}}_{[A,\,A]}(f^*\w f).$

Consider now the diagram
$$
\xymatrix @C=0.3in @R=0.7in{(f^* \w f) \w(f^*\w f) \ar@{}[rd]|{(1)}
\ar[r]^-{\alpha^{-1}}\ar[d]|{(\lambda \w f)\w (f^* \w f)}& ((f^* \w
f)\w f^*)\w f  \ar@{}[rd]|{(2)} \ar[r]^-{\alpha \w f}\ar[d]|{((\lambda \w f) \w f^*)\w f}&
(f^* \w (f \w f^*))\w f \ar@{}[rrd]|{(3)} \ar[d]|{(\lambda \w (f \w f^*))\w
f}\ar[rr]^-{(f^* \w \varepsilon_f)\w f} && (f^* \w 1_A)\w f \ar@{}[rd]|{(4)}
\ar[r]^-{r_{f^*} \w
f}\ar[d]|{(\lambda \w 1_A) \w f}&f^* \w f \ar[d]|{\lambda \w f}\\
(f^* \w f) \w(f^*\w f) \ar[r]_-{\alpha^{-1}}&((f^* \w f)\w f^*)\w f
\ar[r]_-{\alpha \w f}&(f^* \w (f \w f^*))\w f \ar[rr]_-{(f^* \w
\varepsilon_f)\w f} && (f^* \w 1_A)\w f \ar[r]_-{r_{f^*} \w f}&f^* \w
f}$$ in which rectangles (1) and (2) commute by naturality of
$\alpha$, rectangle (3) commutes by naturality
of composition, while rectangle (4) commutes by naturality of $r$.
Thus the outer rectangle of the diagram is also commutative,
and using now that $$m_f=(r_f \w f)\cdot ((f^* \w \varepsilon_f)\w
f^*)\cdot (\alpha_{f^*,f,f^*} \w f) \cdot \alpha^{-1}_{f^* \w f,
f^*,f},$$ we get
\begin{equation}\label{ex1}
(\lambda \w f)\cdot m_f=m_{f^*} \cdot ((\lambda \w f)\w(f^* \w f))
\end{equation}

It then follows from (\ref{ex}) that

{\begin{align}
(\lambda \w f)\cdot m_f \cdot (i_\lambda\w(f^* \w f))
&=m_f \cdot ((\lambda \w f)\w(f^* \w f))\cdot (i_\lambda\w(f^* \w f))\notag\\
&=m_f \cdot (\eta_f\w(f^* \w f))\cdot (p_\lambda\w(f^* \w f))\label{ex2}\\
&=l_{f^* \w f}\cdot (p_\lambda\w(f^* \w f))\notag
\end{align}}

Since the morphisms $\lambda, l_{f^* \w f}$ and $p_\lambda$ all are isomorphisms, one concludes that the composite
$m_f \cdot (i_\lambda\w(f^* \w f))$ is also an isomorphism and hence we have:

\begin{proposition} \label{D} Under the hypotheses above, $\mathcal{D}_f(\lambda) \in \mathfrak{I}^r_{[A,A]}(\mathcal{S}_f)$
for all $\lambda \in \emph{\textbf{Aut}}\,_{[B,\,A]}(f^*)$.
\end{proposition}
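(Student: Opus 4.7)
The plan is to identify the composite $m_f \cdot (i_\lambda \w (f^* \w f))$ with the map $\xi^r_{i_\lambda}$ from \ref{medioinvertibles} and then deduce its invertibility from the identity (\ref{ex2}) produced just above the statement.

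First, I would verify that $p_\lambda$ is an isomorphism. Since $\lambda$ is an automorphism of $f^*$, horizontal composition yields that $\lambda \w f$ is invertible; isomorphisms being stable under pullback (in any category with finite limits) then makes the projection $p_\lambda$ in (\ref{ex}) an isomorphism as well. I would also record that $i_\lambda$ is a monomorphism, which the authors already point out from the fact that $\eta_f$ is, so that $[(f_\lambda, i_\lambda)]$ genuinely represents a subobject of $f^* \w f$ and the map $\mathcal{D}_f$ is well-defined.

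Next, I would read the identity (\ref{ex2}),
$$(\lambda \w f) \cdot m_f \cdot (i_\lambda \w (f^* \w f)) = l_{f^* \w f} \cdot (p_\lambda \w (f^* \w f)),$$
whose right-hand side is a composite of two isomorphisms (the unit constraint $l_{f^* \w f}$ and the horizontal composite $p_\lambda \w (f^* \w f)$, which is invertible because $p_\lambda$ is). The pre-composing factor $\lambda \w f$ on the left is also invertible. Cancelling it on the left forces $\xi^r_{i_\lambda} = m_f \cdot (i_\lambda \w (f^* \w f))$ to be an isomorphism, which is exactly the defining condition for $[(f_\lambda, i_\lambda)] \in \mathfrak{I}^r_{[A,A]}(\mathcal{S}_f)$.

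I do not expect a genuine obstacle: the substantive content is the large commutative rectangle establishing (\ref{ex1}) and its combination with the pullback square to yield (\ref{ex2}), and both steps are already carried out in the preparation for the statement. The only point requiring care is the identification of the displayed composite with $\xi^r_{i_\lambda}$, which is immediate from the definition of $\xi^r$ in \ref{medioinvertibles} applied to the $A$-ring $\mathcal{S}_f = (f^* \w f, m_f, \eta_f)$ of (\ref{genring}), modulo the associativity constraints that the paper has agreed to suppress.
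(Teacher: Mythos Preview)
Your proposal is correct and essentially identical to the paper's own argument: the paper likewise observes that $\lambda$, $l_{f^*\w f}$ and $p_\lambda$ are isomorphisms and concludes from (\ref{ex2}) that $m_f\cdot(i_\lambda\w(f^*\w f))$ is invertible, which is precisely the condition $\mathcal{D}_f(\lambda)\in\mathfrak{I}^r_{[A,A]}(\mathcal{S}_f)$. Your added justification that $p_\lambda$ is an isomorphism because it is the pullback of the isomorphism $\lambda\w f$ is exactly the reason the paper leaves implicit.
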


We shall need the following easy lemma:

\begin{lemma}\label{expull}In an arbitrary category, a commutative diagram $gf=yx$
with $g$ isomorphism is a pullback iff $x$ is an isomorphism.
\end{lemma}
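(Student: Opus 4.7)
Name the arrows so that the commutative square reads $f : P \to A$, $x : P \to B$, $g : A \to C$, $y : B \to C$, with $gf = yx$ and $g$ an isomorphism. The plan is to prove both implications by direct diagram chases, with the universal property of the pullback applied to carefully chosen cones.

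For the easy direction, suppose $x$ is an isomorphism. Given a test cone $(h : T \to A, k : T \to B)$ with $gh = yk$, set $u := x^{-1} k$. Then $xu = k$ holds by construction, and $fu = g^{-1}(gf)u = g^{-1}(yx)x^{-1}k = g^{-1}yk = g^{-1}gh = h$. Uniqueness of $u$ is immediate from the equation $xu = k$ since $x$ is invertible, so the square is a pullback.

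For the converse, assume the square is a pullback. Apply the universal property to the cone $(h, k) = (g^{-1}y, 1_B) : B \to A \times B$, which satisfies the compatibility $gh = g g^{-1} y = y = y \cdot 1_B$. This yields a unique $u : B \to P$ with $fu = g^{-1}y$ and $xu = 1_B$. It remains to check $ux = 1_P$. Both $ux$ and $1_P$ are morphisms $P \to P$; post-composing with $f$ gives $f(ux) = (fu)x = g^{-1}yx = g^{-1}gf = f$, and post-composing with $x$ gives $x(ux) = (xu)x = x$. Thus $ux$ and $1_P$ induce the same cone $(f,x)$ into the pullback, so uniqueness forces $ux = 1_P$. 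Hence $x$ is an isomorphism.

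The argument is purely formal with no real obstacle; the only subtle point is to remember that when verifying $x$ is a two-sided inverse, the pullback uniqueness must be invoked on the identity cone $(f, x) : P \to A \times B$, not on the test cone used to produce $u$.
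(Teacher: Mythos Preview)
Your proof is correct. The paper itself does not supply a proof of this lemma, merely introducing it as ``the following easy lemma'' and leaving the verification to the reader; your argument via the universal property (constructing the inverse of $x$ from the cone $(g^{-1}y,1_B)$ and then invoking uniqueness on the identity cone) is exactly the standard one.
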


\begin{proposition} \label{D1} The map $\mathcal{D}_f: \emph{\text{\textbf{Aut}}}\,_{[B,\,A]}(f^*) \to
\emph{\text{\textbf{Sub}}}_{[A,\,A]}(f^*\w f)$ is a homomorphism of monoids.
\end{proposition}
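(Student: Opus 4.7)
The plan is to check the two defining properties of a monoid homomorphism. The unit axiom is immediate: when $\lambda = 1_{f^*}$, the $2$-cell $\lambda \w f$ is the identity on $f^* \w f$, so the pullback square \eqref{ex} is realised by $f_{1_{f^*}} = 1_A$, $p_{1_{f^*}} = 1_{1_A}$ and $i_{1_{f^*}} = \eta_f$; hence $\mathcal{D}_f(1_{f^*}) = [\eta_f]$, which is the unit of $\textbf{Sub}_{[A,A]}(f^* \w f)$.

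For multiplicativity, given $\lambda, \mu \in \textbf{Aut}_{[B,A]}(f^*)$, the strategy is to exhibit a canonical comparison 2-cell $q : f_\mu \w f_\lambda \to f_{\lambda\mu}$ via the universal property of the pullback defining $f_{\lambda\mu}$, with $i_{\lambda\mu} \cdot q = m_f \cdot (i_\mu \w i_\lambda)$, and then show $q$ is an isomorphism; doing so identifies the image of $m_f \cdot (i_\mu \w i_\lambda)$ (which represents the monoid product $\mathcal{D}_f(\mu) \cdot \mathcal{D}_f(\lambda)$) with $\mathcal{D}_f(\lambda\mu) = [i_{\lambda\mu}]$. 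To produce $q$, I compute $((\lambda\mu) \w f) \cdot m_f \cdot (i_\mu \w i_\lambda)$ step by step: factor $(\lambda\mu) \w f = (\lambda \w f) \cdot (\mu \w f)$, push $\mu \w f$ past $m_f$ using \eqref{ex1}, substitute $(\mu \w f) \cdot i_\mu = \eta_f \cdot p_\mu$ from the pullback defining $f_\mu$, invoke the left-unit identity $m_f \cdot (\eta_f \w (f^* \w f)) = l_{f^* \w f}$ (used already in the derivation of \eqref{ex2}), apply naturality of $l$, and finally substitute $(\lambda \w f) \cdot i_\lambda = \eta_f \cdot p_\lambda$. The outcome takes the form $\eta_f \cdot \sigma$ with $\sigma = p_\lambda \cdot l_{f_\lambda} \cdot (p_\mu \w f_\lambda)$, which yields $q$ satisfying $p_{\lambda\mu} \cdot q = \sigma$ and $i_{\lambda\mu} \cdot q = m_f \cdot (i_\mu \w i_\lambda)$.

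It remains to show that $q$ is invertible, whereupon $m_f \cdot (i_\mu \w i_\lambda) = i_{\lambda\mu} \cdot q$ is a monomorphism representing the same subobject as $i_{\lambda\mu}$, completing the argument. Here Lemma \ref{expull} is decisive: because $\lambda$, $\mu$ and $\lambda\mu$ are automorphisms, each of the horizontal composites $\lambda \w f$, $\mu \w f$ and $(\lambda\mu) \w f$ is an isomorphism in $[A,A]$, and the lemma forces $p_\lambda$, $p_\mu$, $p_{\lambda\mu}$ to be isomorphisms. Consequently $\sigma$ is a composite of isomorphisms, and the equation $p_{\lambda\mu} \cdot q = \sigma$ combined with the invertibility of $p_{\lambda\mu}$ exhibits $q$ as an isomorphism. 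The main subtlety I anticipate is keeping the tensor orders consistent so that $\mathcal{D}_f$ comes out as a covariant (rather than anti-) homomorphism, matching the convention used for the product of subobjects of an algebra; once that bookkeeping is settled, the whole argument reduces cleanly, via Lemma \ref{expull}, to the observation that one is pulling $\eta_f$ back along an isomorphism.
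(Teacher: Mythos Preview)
Your proposal is correct and follows essentially the same route as the paper: both verify the unit by inspection, then establish multiplicativity by showing that the square with top $m_f\cdot(i_\lambda \w i_{\lambda'})$, right $(\lambda\lambda')\w f$, bottom $\eta_f$, and an invertible left vertical commutes, invoking Lemma~\ref{expull}. The only cosmetic difference is that the paper applies Lemma~\ref{expull} once to that final square to conclude it is itself a pullback (hence directly a model for $f_{\lambda\lambda'}$), whereas you apply the lemma to the three defining pullbacks to see that $p_\lambda,p_\mu,p_{\lambda\mu}$ are isomorphisms and then deduce that the comparison map $q$ is invertible; the underlying computation and the ingredients used are identical.
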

\begin{proof}Quite obviously, the diagram
$$
\xymatrix{ 1_A \ar[d]_{1} \ar[r]^-{\eta_f}& f^* \w
f \ar[d]^{1_{f^*} \w f=1_{f^* \w f}}\\
1_A \ar[r]_-{\eta_f}& f^* \w f}
$$ is a pullback, showing that $\mathcal{D}_f(1_{f^*})=[(1_A, \eta_f)]=1.$

Next, for any two elements $\lambda, \lambda' \in
\text{\textbf{Aut}}_{[B,\,A]}(f^*)$, consider the diagram

$$
\xymatrix@R10pt{f_\lambda \w f_{\lambda'}\ar@{}[rrd]|{(I)}
\ar[rr]^-{f_{\lambda} \w p_{\lambda'}} \ar[dd]_{f_{\lambda} \w
i_{\lambda'}}&& f_{\lambda} \w 1_A \ar[d]_{f_{\lambda}\w
\eta_f}\ar[rr]^-{p_{\lambda}\w 1_A}&& 1_A \w 1_A \ar[dd]^{1_A \w \eta_f}\\
&& f_{\lambda} \w (f^* \w f)  \ar@{}[rru]_{(IV)} \ar@{}[d]_{(V)}\ar[rrd]^{p_{\lambda}\w (f^* \w f)}&&\\
f_{\lambda} \w (f^* \w f)\ar@{}[rrdd]|{(II)}\ar[dd]_{i_{\lambda}\w
(f^* \w f)} \ar[rru]^{f_{\lambda} \w (\lambda' \w
f)}\ar[rr]_{p_{\lambda}\w (f^* \w f)}&& 1_A \w (f^* \w f) \ar@/^7pc/ [dddd]^{l_{f^* \w f}}
\ar[dd]^{\eta_f \w (f^* \w f)}
\ar[rr]_-{1_A \w (\lambda' \w f)}&& 1_A \w (f^* \w f)\ar[dddd]^{l_{f^* \w f}}\\\\
(f^* \w f)\w (f^* \w f)\ar[dd]_{m_f}\ar[rr]_{(\lambda \w f)\w (f^*
\w f)}&& (f^*
\w f)\w (f^* \w f)\ar[dd]^{m_f} \ar@{}[rr]|{(VI)}  \ar@{}[rruu]_(0.75){(VII)}&& \\\\
f^* \w f \ar@{}[rruu]|{(III)} \ar[rr]_{\lambda \w f}&& f^* \w f \ar[rr]_-{\lambda' \w
f}&& f^* \w f}$$ in which Diagrams (I) and (II) commute by (\ref{ex}),
Diagram (III) commutes by (\ref{ex1}), Diagrams (IV), (V) commute by naturality of composition,
Diagram (VII) commutes by naturality of $l$, and Diagram (VI) commutes since
$\eta_f : 1_A \to f^* \w f$ is the unit for the multiplication $m_f$.
Thus the outer diagram, which by naturality of $l$ can be
rewritten as
$$
\xymatrix @C=0.3in @R=0.4in{f_\lambda \w f_{\lambda'} \ar[r]^-{i_\lambda \w
i_{\lambda'}} \ar[d]_{p_\lambda \w p_{\lambda'}} & (f^*
\w f)\w (f^* \w f)\ar[r]^-{m_f}& f^* \w f \ar[dd]^{(\lambda
 \lambda')\w f}\\
 1_A \w 1_A \ar[d]_{{l_{1\!\!_{_A}}}={r_{1\!\!_{_A}}}}&&\\
1_A \ar[rr]_{\eta_f}&& f^* \w f \,,}$$ commutes, and since all the
2-cells $l_{1\!_A}$, $\la$, $\la'$, $p_\lambda$ and $p_{\lambda'}$ (and hence also $_{l_{1\!_A}}\!\cdot (p_\lambda \w p_{\lambda'})$
and $(\lambda \lambda')\w f$) are isomorphisms, it follows from Lemma
\ref{expull} that the diagram is a pullback. Then, in particular, the composite $m_f \cdot (i_\lambda \w
 i_{\lambda'})$ is a monomorphism, and thus
$$\mathcal{D}_f(\lambda
 \lambda')=[(f_\lambda \w f_{\lambda'}, m_f \cdot (i_\lambda \w
 i_{\lambda'}))].$$
 Moreover,
 $$[(f_{\lambda},i_{\lambda})]\cdot[(f_{\lambda'},i_{\lambda'})]=
 [(f_\lambda \w f_{\lambda'}, m_f \cdot (i_\lambda \w
 i_{\lambda'}))]$$ in $\text{\textbf{Sub}}_{[\A,\,\A]}(f^* \w f)$ by \cite[Remark 3.3]{GM}.
 Thus $$\mathcal{D}_f(\lambda
 \lambda')=[(f_{\lambda},i_{\lambda})]\cdot[(f_{\lambda'},i_{\lambda'})]=\mathcal{D}_f(\lambda) \cdot \mathcal{D}_f(\lambda'),$$
 and hence $\mathcal{D}_f$ is a homomorphism of monoids.
\end{proof}

\begin{remark} \label{D2}  Putting $\la'=\la^{-1}$ in the proof of Proposition \ref{D1}, gives that for any $\lambda \in \mathbf{Aut}\,_{[B,\,A]}(f^*)$, the $1$--cell $f_\lambda$ defined in \eqref{ex} is invertible.
\end{remark}

\begin{proposition} \label{group} The monoid structure on $\emph{\textbf{Sub}}_{[A,\,A]}(f^* \w f)$ restricts to
a group structure on $\mathfrak{I}^A_f$. Moreover,  $\mathcal{D}_f$ induces a group homomorphism $$\overline{\mathcal{D}}_f :
\emph{\text{\textbf{Aut}}}_{[B,\,A]}(f^*) \to \mathfrak{I}^A_f.$$
\end{proposition}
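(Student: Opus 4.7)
The plan is to first show that $\mathfrak{I}^A_f$ inherits a group structure from the monoid $\textbf{Sub}_{[A,A]}(f^* \w f)$, after which the group homomorphism $\overline{\mathcal{D}}_f$ arises as the corestriction of $\mathcal{D}_f$. For the corestriction, Proposition \ref{D} places $\mathcal{D}_f(\lambda)$ in $\mathfrak{I}^r_{[A,A]}(\mathcal{S}_f)$, and Remark \ref{D2} asserts that $f_\lambda$ is invertible, so $\mathcal{D}_f(\lambda) \in \mathfrak{I}^{A,r}_f = \mathfrak{I}^A_f$ by Proposition \ref{left=right}. Combined with the fact that $\mathcal{D}_f$ is a monoid homomorphism (Proposition \ref{D1}), and since any monoid homomorphism between groups is automatically a group homomorphism, the group map $\overline{\mathcal{D}}_f$ is immediate from the group structure on $\mathfrak{I}^A_f$.

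The submonoid axioms are verified as follows. The identity $[(1_A, \eta_f)]$ lies in $\mathfrak{I}^A_f$ because $1_A$ is trivially invertible and the triangle identity \eqref{adj1} yields $\xi_{\eta_f} = r_f$, an isomorphism. For closure, given $[(h, i_h)], [(h', i_{h'})] \in \mathfrak{I}^A_f$, the composite $m_f \cdot (i_h \w i_{h'})$ factors as $\xi^r_{i_h} \cdot (h \w i_{h'})$; the first factor is an isomorphism and the second a monomorphism (since $h \w -$ is an equivalence by Remark \ref{equivalence}), so the composite is monic and by \cite[Remark 3.3]{GM} represents the product as $[(h \w h', m_f \cdot (i_h \w i_{h'}))]$. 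The 1-cell $h \w h'$ is invertible, and a standard bicategorical calculation using the associativity of $m_f$ and the triangle identities exhibits $\xi_{m_f \cdot (i_h \w i_{h'})}$, up to associators, as the composite of $\xi_{i_{h'}}$ with $\xi_{i_h} \w h'$, hence as an isomorphism; so the product lies in $\mathfrak{I}^A_f$.

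For inverses, take $[(h, i_h)] \in \mathfrak{I}^A_f$ with pseudo-inverse $h^*$. Proposition \ref{conjugate} supplies the isomorphism $\xi_{i_h}: f \w h \to f$, placing us in the setting of Proposition \ref{lifting.1}, which produces a monomorphism $i_{h^*}: h^* \to f^* \w f$ with $[(h^*, i_{h^*})] \in \mathfrak{I}^A_f$. I claim $[(h^*, i_{h^*})]$ is the inverse of $[(h, i_h)]$: by the closure calculation the product is represented by $(h \w h^*, m_f \cdot (i_h \w i_{h^*}))$, so to identify it with $[(1_A, \eta_f)]$ it suffices to exhibit an isomorphism $\phi: h \w h^* \simeq 1_A$ satisfying $m_f \cdot (i_h \w i_{h^*}) = \eta_f \cdot \phi$. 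Unwinding the construction $i_{h^*} = (\tau \w f) \cdot (h^* \w \eta_f)$, where $\tau: h^* \w f^* \simeq f^*$ is the mate-induced isomorphism coming from $f \w h \simeq f$, and invoking the triangle identities of both $f \dashv f^*$ and $h \dashv h^*$, one verifies the identity with $\phi = \e_h$; the other side follows by a symmetric argument (or by applying the same construction to $[(h^*, i_{h^*})]$ and using Proposition \ref{left=right}). This final diagram chase, tracking $i_{h^*}$ and the mate $\tau$ through the composite adjunction $f \w h \dashv h^* \w f^*$ and matching the result against $\eta_f \cdot \e_h$, is the principal obstacle; everything else is formal.
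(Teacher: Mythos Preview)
Your proof is correct and follows essentially the same strategy as the paper: verify that $\mathfrak{I}^A_f$ is a submonoid (identity, closure via the factorization of $m_f\cdot(i_h\w i_{h'})$ and \cite[Remark 3.3]{GM}), produce the inverse $[(h^*,i_{h^*})]$ via Proposition~\ref{lifting.1}, and then corestrict $\mathcal{D}_f$ using Propositions~\ref{D}, \ref{D1}, \ref{left=right} and Remark~\ref{D2}. The only cosmetic differences are that you use the right-hand factorization $\xi^r_{i_h}\cdot(h\w i_{h'})$ where the paper uses the left-hand one $\xi^l_{i_{h'}}\cdot(i_h\w h')$, and you spell out the inverse verification (the diagram chase with $\phi=\e_h$) whereas the paper simply asserts it is ``easy to see'' and defers the closure-in-$\mathfrak{I}^l$ step to \cite[Proposition 3.5]{GM}.
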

\begin{proof} The 2-cell $\eta_f$ is monomorphic by assumption. Since, quite obviously, $1_A$ is invertible, and
$[(1_A, \eta_f)]\in \mathfrak{I}^l_{[A,A]}(\mathcal{S}_f)$, it follows that $[(1_A, \eta_f)]\in \mathfrak{I}^A_f$.

Next, if $[(h, i_{h})], [(g, i_{g})] \in \mathfrak{I}^A_f$, then clearly $h \w g$ is invertible.  Observe that
  $\xi^l_{i_{g}}:\mathcal{S}_f \w g \to \mathcal{S}_f$ is an
  isomorphism as $[(g, i_g)] \in \mathfrak{I}^A_f \subseteq \mathfrak{I}^l_{[A,A]}(\mathcal{S}_f)$.
   Since $i_h$ is a monomorphism, we get from Remark \ref{equivalence}  the 2-cell $i_{h}\w g : h \w g \to \mathcal{S}_f \w g$ is
  a monomorphism.
 On the other hand,  $m_f \cdot (i_{h}\w i_{g})=\xi^l_{i_{g}} \cdot (i_{h}\w g),$ and it follows that the 2-cell $i_{h \w g} : =m_f \cdot (i_{h}\w i_{g})$ is monomorphic. Thus, by \cite[Remark 3.3]{GM}, ${[(h, i_{h})]\cdot [(g,i_{g})]=}[(h \w g, i_{h \w g})]$ in $\textbf{Sub}_{[A,\,A]}(f^* \w f)$ .
Moreover, $[(h \w g, i_{h \w g})]$ lies in $\mathfrak{I}^l_{[A,A]}(\mathcal{S}_f)$ by exactly the same argument as in
the proof of \cite[Proposition 3.5]{GM}. Thus $[(h \w g, i_{h \w g})]\in \mathfrak{I}^A_f$, and hence
$\mathfrak{I}^A_f$ inherits the structure of a monoid from $\textbf{Sub}_{[A,\,A]}(f^* \w f)$.  In view of Proposition \ref{lifting},
it is easy to see that if $[(h, i_{h})] \in\mathfrak{I}^A_f$,
then its two-sided inverse is $[(h^*, i_{h^*})]$, where $h^*$ the pseudo-inverse of $h$. Therefore, $\mathfrak{I}^A_f$ is in fact a group.

In light of Proposition \ref{D} and Remark \ref{D2}, it follows from Proposition \ref{left=right} that $\mathcal{D}_f(\lambda)
\in \mathfrak{I}^A_f$, for any $\lambda \in \mathbf{Aut}\,_{[B,\,A]}(f^*)$. Proposition \ref{D1} guarantees then that $\mathcal{D}_f(\lambda)$ induces a homomorphism of groups $\overline{\mathcal{D}}_f :
\mathbf{Aut}_{[B,\,A]}(f^*) \to \mathfrak{I}^A_f$.
\end{proof}

\begin{theorem}\label{exact.1} The following sequence of groups
$$1 \xr{}\emph{\text{\textbf{Aut}}}_{[A,\,A]}(1_A)
\xr{\ol{\omega}_0}\emph{\text{\textbf{Aut}}}_{[B,\,\A]}(f^*)
\xr{\ol{\mathcal{D}}_f}\mathfrak{I}^A_f$$ is exact.
\end{theorem}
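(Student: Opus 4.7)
By Proposition~\ref{mono}, the map $\ol{\omega}_0 = \varpi_0$ is already injective, so exactness at the leftmost group is automatic; only exactness at $\mathbf{Aut}_{[B,A]}(f^*)$ requires work, i.e.\ the equality $\mathrm{Im}(\ol{\omega}_0) = \mathrm{Ker}(\ol{\mathcal{D}}_f)$. The whole argument will hinge on the identity
\[
(\varpi_0(s)\w f)\cdot \eta_f \;=\; \eta_f \cdot s,
\]
which, reading $\B$ as a strict 2-category (justified by the coherence theorem invoked after Proposition~\ref{lifting}), is simply the interchange law expressing the horizontal composite $s \w \eta_f$ in two orders.

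To show $\mathrm{Im}(\ol{\omega}_0) \subseteq \mathrm{Ker}(\ol{\mathcal{D}}_f)$, I would fix $s \in \mathbf{Aut}_{[A,A]}(1_A)$, set $\lambda = \varpi_0(s)$, and use the identity above to recognize $(\eta_f, s)$ as a cone over the defining pullback \eqref{ex} of $f_\lambda$. This produces a comparison 2-cell $\phi : 1_A \to f_\lambda$ with $i_\lambda \cdot \phi = \eta_f$ and $p_\lambda \cdot \phi = s$. Since $\lambda \w f$ is an iso, the pullback of $\eta_f$ along it is isomorphic to $\eta_f$ itself; in particular $p_\lambda$ is an iso, and together with the invertibility of $s$ this makes $\phi$ an iso. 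Thus $[(f_\lambda, i_\lambda)] = [(1_A, \eta_f)]$ in $\mathfrak{I}^A_f$, i.e.\ $\ol{\mathcal{D}}_f(\lambda) = 1$.

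Conversely, assuming $\ol{\mathcal{D}}_f(\lambda) = 1$, there is an iso $\phi : 1_A \to f_\lambda$ with $i_\lambda \cdot \phi = \eta_f$. Setting $t := p_\lambda \cdot \phi$, the commutativity of \eqref{ex} gives $(\lambda\w f)\cdot\eta_f = \eta_f \cdot t$. Rerunning the same construction with $\lambda^{-1}$ yields $t'$ with $(\lambda^{-1}\w f)\cdot\eta_f = \eta_f\cdot t'$, and since $\eta_f$ is monic, composing the two relations forces $t' = t^{-1}$, so $t \in \mathbf{Aut}_{[A,A]}(1_A)$. By the opening identity, $(\varpi_0(t)\w f)\cdot\eta_f = \eta_f\cdot t = (\lambda\w f)\cdot\eta_f$, reducing the problem to showing $\lambda = \varpi_0(t)$.

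The main obstacle is to pass from $(\lambda \w f)\cdot\eta_f = (\varpi_0(t)\w f)\cdot\eta_f$ to $\lambda = \varpi_0(t)$. My plan is to establish that the map $\Phi : \mu \mapsto (\mu\w f)\cdot\eta_f$ from $[B,A](f^*, f^*)$ to $[A,A](1_A, f^*\w f)$ is a bijection, by exhibiting it as the composite of the mate bijection $[B,A](f^*, f^*) \cong [A,B](f, f)$ (from the Mates subsection) with the standard hom-adjunction bijection $[A,B](f,f) \cong [A,A](1_A, f^*\w f)$, $\sigma \mapsto (f^*\w\sigma)\cdot\eta_f$. This reduces to the identity $(\ol{\sigma}\w f)\cdot\eta_f = (f^*\w\sigma)\cdot\eta_f$ whenever $\sigma : f \to f$ has mate $\ol{\sigma} : f^* \to f^*$, which unfolds from the explicit formula for $\ol{\sigma}$ by the interchange law and the triangle identity $(f^*\w\e_f)\cdot(\eta_f\w f^*) = 1_{f^*}$. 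Injectivity of $\Phi$ then yields $\lambda = \varpi_0(t)$, finishing the proof.
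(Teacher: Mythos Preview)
Your proof is correct and follows essentially the same route as the paper. For $\mathrm{Im}(\ol{\omega}_0)\subseteq\mathrm{Ker}(\ol{\mathcal{D}}_f)$, both you and the paper recognize the square with vertical arrows $s$ and $\varpi_0(s)\w f$ as a pullback---you via the universal comparison map, the paper via Lemma~\ref{expull}. For the reverse inclusion, the paper computes the mate of $\lambda$ directly and identifies it with the mate of $\ol{\omega}_0(s)$; your injectivity of $\Phi$ is the same bijection repackaged. In fact $\Phi$ is exactly the hom-set bijection of the adjunction $-\w f^*\dashv -\w f:[A,A]\to[B,A]$ evaluated at $(1_A,f^*)$, so you could invoke that directly rather than factoring through mates and the other hom-bijection. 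One small redundancy: the detour through $\lambda^{-1}$ to prove $t$ invertible is unnecessary, since $p_\lambda$ is already an isomorphism (it is the pullback of $\eta_f$ along the iso $\lambda\w f$), and hence $t=p_\lambda\cdot\phi$ is a composite of isomorphisms.
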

\begin{proof} To say that the sequence is exact at $\text{\textbf{Aut}}_{[A,\,A]}(1_A)$ is to say that
$\overline{\omega}_0$ is injective, which is indeed the case by Proposition \ref{mono}.

To prove exactness at $\text{\textbf{Aut}}_{[B,\,A]}(f^*)$, we have to show that
$\text{Ker}(\overline{\mathcal{D}}_f)=\text{Im}(\overline{\omega}_0)$.
For any $s \in \text{\textbf{Aut}}_{[A,\,A]}(1_A)$, the diagram
\begin{equation}\label{d.exact.1}
\xymatrix{1_A \ar[rr]^{\eta_f} \ar[d]_{(l_{1\!_A})^{-1}}&& f^* \w f \ar[d]^{(l_{f^*})^{-1} \w f}\\
1_A \w 1_A \ar[d]_{s \w 1\!_A}&& (1_A \w f^*)\w f \ar[d]^{(s \w f^*)\w f}\\
1_A \w 1_A \ar[d]_{l_{1\!_A}}&& (1_A \w f^*) \w f \ar[d]^{l_{f^*} \w f}\\
1_A \ar[rr]_{\eta_f} && f^* \w f}
\end{equation}
is commutative, as can be seen easily using the naturality  of $l$ and of $\alpha$ and the fact that
$$
\xymatrix@R10pt{(1_A \w u) \w v \ar[rr]^-{\alpha_{1\!_A, u,v}} \ar[rrdd]_{l_u \w v}&& 1_A \w (u \w v) \ar[dd]^{l_{u \w v}}\\\\
&& u \w v}
$$
is a commutative diagram for all 1-cells $u,v : A \to A$ (e.g. \cite[Proposition 1.1]{JS}). Since
$s=l_{1\!_A}\cdot(s \w 1\!_A)\cdot(l_{1\!_A})^{-1}$ by naturality of $l$ and $\overline{\omega}_0(s)=l_{f^*} \w (s \w f^*) \w (l_{f^*})^{-1}$,
it follows that (\ref{d.exact.1}) may be rewritten in the form
$$
\xymatrix{1_A \ar[rr]^{\eta_f} \ar[d]_{s}&& f^* \w f \ar[d]^{\overline{\omega}_0(s) \w f}\\
1_A \ar[rr]_{\eta_f} && f^* \w f.}
$$
Since both $s$ and $\overline{\omega}_0(s)$ are invertible 2-cells, it follows from Lemma \ref{expull} that
the diagram above is a pullback, implying that
$\overline{\mathcal{D}}_f(\overline{\omega}_0(s))=[\eta_f]=1$ in $\mathfrak{I}^A_f$.
Since $s \in \text{\textbf{Aut}}_{[A,\,A]}(1_\A)$ was arbitrary,
$\text{Im}(\overline{\omega}_0) \subseteq \text{Ker}(\overline{\mathcal{D}}_f)$.

Next, if $\lambda \in \text{\textbf{Aut}}_{[B,\,A]}(f^*)$ is such that $\overline{\mathcal{D}}_f(\lambda)=1$, then there
is an automorphism $s: 1_A \to 1_A$ such that the diagram
$$
\xymatrix{1_A \ar[r]^{\eta_f} \ar[d]_{s}& f^* \w f \ar[d]^{\lambda \w f}\\
1_A \ar[r]_{\eta_f}& f^* \w f}
$$ is a pullback, implying that in the diagram
$$
\xymatrix @C=0.3in @R=0.5in{f \ar[r]^-{r^{-1}_f} & f \w 1_A \ar[r]^-{f \w \eta_f} \ar[d]_{f \w s} & f\w (f^* \w f) \ar[r]^-{f \w ( \la \w f)}& f\w (f^* \w f)\ar[r]^-{\alpha^{-1}_{f,f^*\!\!,\, f}}& (f \w f^*) \w f \ar[d]^{\e_f \w f }\\
&f \w 1_A \ar[rru]_{f \w \eta_f}\ar[r]_-{r_f}&f \ar[rr]_-{l_f^{-1}}&& 1_A \w f}
$$ the triangle commutes, while the trapezoid commutes by (\ref{adj1}).
It then follows that the mate of $\la$ under the adjunction $f \dashv f^*$, which is the composite
$$l_f \cdot (\varepsilon_f \w f ) \cdot \alpha^{-1}_{f,f^*\!\!,\, f} \cdot (f \w ( \la \w f)) \cdot (f \w \eta_f) \cdot r^{-1}_f,$$
is in fact equal to the composite $r_f \cdot (f \w s)\cdot r^{-1}_f$. Direct inspection using
the fact that the diagram
$$
\xymatrix @C=0.3in @R=0.3in{(f\w 1_A)\w f^* \ar[rr]^-{\alpha_{f, 1_A, f^*}}\ar[dr]_{r_f \w f^*}&& f\w (1_A\w f^*)\ar[dl]^{f \w l_{f^*}}\\
&f\w f^*}
$$ commutes, shows that the mate of the last  composite
under the adjunction $f \dashv f^*$ is just $\overline{\omega}_0(s)=l_{f^*} \cdot (s \w f^*) \cdot (l_{f^*})^{-1} .$ This proves that $\overline{\omega}_0(s)=\la$.
Thus $\text{Ker}(\overline{\mathcal{D}}_f) \subseteq \text{Im}(\overline{\omega}_0)$, and hence $\text{Ker}(\overline{\mathcal{D}}_f) = \text{Im}(\overline{\omega}_0)$.
\end{proof}

\subsection{An exact sequence involving the Picard group}\label{exactPic}

For any object $A$ of $\mathbb{B}$, define the \emph{Picard Group} of $A$, denoted $\text{\textbf{Pic}}(A)$, to
be the collection of isomorphism-classes $[h]$ of invertible 1-cells $h:A \to A$ with product and inverses defined by $$[h]\cdot[g]=[h
\w g]\,\,\text{and}\,\, [h]^{-1}=[h^{*}],$$ where $h^*$ is a pseudo-inverse of $h$. As easily seen, $\text{\textbf{Pic}}(A)$
is a well-defined group with identity element $[1_A]$.

\begin{proposition}\label{picar} The assignment that takes
$[(h,i_h)]\in \mathfrak{I}^A_f$ to $[h]$ defines a group homomorphism
$$\Omega_f :\mathfrak{I}^A_f \to \emph{\text{\textbf{Pic}}}(A).$$
\end{proposition}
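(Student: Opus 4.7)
The plan is to verify the three defining properties of a group homomorphism. The argument is quite direct, as the nontrivial work has already been done in Proposition \ref{group}.

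First, I would check well-definedness. The target $[h] \in \textbf{Pic}(A)$ depends only on the isomorphism class of $h$ as a $1$-cell in $[A,A]$. If two representatives $(h, i_h)$ and $(h', i_{h'})$ determine the same element of $\textbf{Sub}_{[A,A]}(f^* \w f)$, then by the definition of subobject there is an isomorphism $\phi : h \to h'$ in $[A,A]$ with $i_{h'} \cdot \phi = i_h$, which in particular gives $[h] = [h']$ in $\textbf{Pic}(A)$. Moreover, by the very definition of $\mathfrak{I}^A_f$, the $1$-cell $h$ is invertible, so $[h]$ is a bona fide element of $\textbf{Pic}(A)$.

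Next, I would record that the identity element of $\mathfrak{I}^A_f$ is $[(1_A, \eta_f)]$ (as identified in the proof of Proposition \ref{group}), and $\Omega_f([(1_A, \eta_f)]) = [1_A]$, which is the identity of $\textbf{Pic}(A)$.

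Finally, for multiplicativity, I would invoke the formula established in the proof of Proposition \ref{group}: the product in $\mathfrak{I}^A_f$ is
$$[(h, i_h)] \cdot [(g, i_g)] = [(h \w g, i_{h \w g})].$$
Applying $\Omega_f$ therefore yields $[h \w g] = [h] \cdot [g] = \Omega_f([(h, i_h)]) \cdot \Omega_f([(g, i_g)])$. There is no substantive obstacle here: all of the work lies in Proposition \ref{group}, where the multiplication in $\mathfrak{I}^A_f$ was shown to be inherited from $\textbf{Sub}_{[A,A]}(f^* \w f)$ and computed explicitly on representatives by horizontal composition of the underlying $1$-cells.
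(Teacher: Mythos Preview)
Your proof is correct and follows essentially the same approach as the paper: both arguments reduce to the observation that $[h]\in\textbf{Pic}(A)$ by the definition of $\mathfrak{I}^A_f$, and that multiplicativity is immediate from the product formula $[(h,i_h)]\cdot[(g,i_g)]=[(h\w g,i_{h\w g})]$ established in the proof of Proposition~\ref{group}. Your version is slightly more explicit about well-definedness and the identity element, but otherwise there is no meaningful difference.
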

\begin{proof} For any $[(h,i_h)]\in \mathfrak{I}^A_f$, $[h] \in \text{\textbf{Pic}}(A)$ by the very definition
of $\mathfrak{I}^A_f$. The product $[(h,i_{h})]\cdot [(h', i_{h'})]$ of $[(h,i_{h})], \, [(h',i_{h'})] \in \mathfrak{I}^A_f$
is the pair $([h\w h'],i_{h\w h'})$, where $i_{h\w h'}$ is the composite
$$i_{h\w h'}: h\w h' \xr{i_{h}\w i_{h'}}   (f^*\w f) \w (f^*\w f)\xr{m_f} f^* \w f$$ (see the proof of Proposition \ref{group}).  Therefore,  $\Omega_f$ preserves
the product, and hence is a group homomorphism.
\end{proof}

\begin{theorem}\label{exact} The sequence of groups
$$1 \xr{}\emph{\text{\textbf{Aut}}}_{[A,\,A]}(1_A)\xr{\ol{\omega}_0}\emph{\textbf{Aut}}_{[B,\,A]}(f^*)
\xr{\ol{\mathcal{D}}_f} \mathfrak{I}^A_f \xr{\Omega_f} \emph{\textbf{Pic}}(A)$$ is exact.
\end{theorem}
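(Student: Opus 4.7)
The first two segments of the sequence are exact by Theorem \ref{exact.1}, so it suffices to prove exactness at $\mathfrak{I}^A_f$, i.e.\ $\mathrm{Im}(\ol{\mathcal{D}}_f)=\mathrm{Ker}(\Omega_f)$. Throughout, I follow the convention of the footnote to Proposition \ref{lifting} and invoke the coherence theorem to suppress associators and unitors, treating $\mathbb{B}$ as a strict $2$-category.

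For $\mathrm{Im}(\ol{\mathcal{D}}_f)\subseteq\mathrm{Ker}(\Omega_f)$: given $\la\in\textbf{Aut}_{[B,A]}(f^*)$, the $2$-cell $\la\w f$ appearing in the defining pullback \eqref{ex} is invertible. By Lemma \ref{expull} the projection $p_\la:f_\la\to 1_A$ is then an isomorphism, so $[f_\la]=[1_A]=1$ in $\textbf{Pic}(A)$, giving $\Omega_f(\ol{\mathcal{D}}_f(\la))=1$.

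For the reverse inclusion, let $[(h,i_h)]\in\mathfrak{I}^A_f$ with $[h]=1$ in $\textbf{Pic}(A)$, and fix an isomorphism $\varphi:h\to 1_A$. The monomorphism $j:=i_h\cdot\varphi^{-1}:1_A\to f^*\w f$ represents the same subobject, so $[(1_A,j)]=[(h,i_h)]$ in $\mathfrak{I}^A_f$. By Proposition \ref{left=right} combined with Proposition \ref{conjugate} (applied with the trivial adjoint equivalence $1_A\dashv 1_A$), the $2$-cell $\xi^*_j:f^*\to f^*$ is invertible. Put $\la:=(\xi^*_j)^{-1}$. To establish $\ol{\mathcal{D}}_f(\la)=[(1_A,j)]$ it is enough---thanks to Lemma \ref{expull}, since $\la\w f$ is invertible---to check that the square
\[
\xymatrix{1_A \ar[d]_{1_{1_A}} \ar[r]^{j} & f^* \w f \ar[d]^{\la \w f} \\ 1_A \ar[r]_{\eta_f} & f^* \w f}
\]
commutes, i.e.\ $(\la\w f)\cdot j=\eta_f$, or equivalently $(\xi^*_j\w f)\cdot\eta_f=j$.

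This identity, which is the main computational point, follows from the unit axiom of the $A$-ring $\mathcal{S}_f=(f^*\w f,m_f,\eta_f)$. Indeed, using the expression $\xi^r_j=(\xi^*_j\w f)\cdot\alpha^{-1}_{1_A,f^*\!,f}=m_f\cdot(j\w(f^*\w f))$ recorded after \eqref{genring}, one rewrites $(\xi^*_j\w f)\cdot\eta_f$ as $m_f\cdot(j\w(f^*\w f))\cdot\eta_f$; by bifunctoriality of the horizontal composition (interchange) this equals $m_f\cdot((f^*\w f)\w\eta_f)\cdot j$, which collapses to $j$ by the right-unit axiom $m_f\cdot((f^*\w f)\w\eta_f)=1_{f^*\w f}$. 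The principal burden in a rigorous write-up is thus only the bookkeeping of whiskerings and coherence cells; no additional structural hypothesis on $\mathbb{B}$ is needed.
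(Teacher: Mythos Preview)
Your proof is correct and follows essentially the same strategy as the paper: both arguments reduce to verifying, via the right-unit axiom $m_f\cdot((f^*\w f)\w\eta_f)=r_{f^*\w f}$ of the $A$-ring $\mathcal{S}_f$, that a suitable square is commutative and hence (by Lemma \ref{expull}) a pullback. The only cosmetic difference is that you first replace $(h,i_h)$ by the equivalent subobject $(1_A,j)$ and set $\la=(\xi^*_j)^{-1}$, whereas the paper keeps $(h,i_h)$ and defines $\la=l_{f^*}\cdot(\tau\w f^*)\cdot(\xi^*_{i_h})^{-1}$; unwinding shows these choices of $\la$ agree.
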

\begin{proof} By Theorem \ref{exact.1}, it suffices to show that the sequence is exact at $\mathfrak{I}^A_f$.
So, suppose $[(h,i_h)]\in \mathfrak{I}^A_f$ is such that
$\Omega_f([(h,i_h)])=[h]=[1_A]$. Then there exists an isomorphism
$\tau: h \to 1_A$ in $[A,A]$. Define $\lambda$ to be the
composite
$$f^* \xr{(\xi^*_{i_h})^{-1}} h \w f^* \xr{\tau \w f^*} 1_A \w f^*
\xr{l_{f^*}}f^*.$$ It is clear that $\lambda \in
\text{\textbf{Aut}}_{[B,\,A]}(f^*)$. We claim that
$\ol{\mathcal{D}}_f(\lambda)=[(h,i_h)]$. Indeed, we know that
the diagram
$$\xymatrix@R10pt{(h \w f^*)\w f \ar[rr]^-{\alpha_{h,f^*\!\!,\,f}}\ar[dd]_{(\tau \w f^*)\w
f}&& h \w (f^* \w f)\ar[dd]^{\tau \w (f^* \w f)}\\\\
(1_A \w f^*)\w f \ar[rr]_-{\alpha_{1\!_A, f^*\!\!,\,f}}&& 1_A \w (f^* \w
f)}$$ commutes by naturality of $\alpha$, and
$l_{f^* \w f}\cdot \alpha_{1\!_A, f^*\!\!,\,f}=l_{f^*} \w f$ by
  one of the two coherence axioms (see \cite[Proposition 1.1]{JS}). Since
 $(\xi^r_{i_h})^{-1}=\alpha_{h,f^*\!\!,\,f}\cdot ((\xi^*_{i_h})^{-1} \w f)$ by the dual of \cite[Remark 4.2]{GM}),
the 2-cell $\lambda \w f$ can be rewritten as follows
$$f^* \w f \xr{(\xi^r_{i_h})^{-1}}h \w (f^* \w f)\xr
{\tau \w (f^* \w f)} 1_A \w (f^* \w f)\xr{l_{f^* \w f}}f^*
\w f.$$
In the following diagram
$$\xymatrix@R10pt{h \ar@{}[rrdd]|{(1)}\ar[rr]^-{(r_h)^{-1}}\ar[dd]_{i_h}&& h \w 1_A
\ar@{}[rrdd]|{(2)}\ar[rr]^-{\tau \w 1_A}\ar[dd]^{h \w \eta_f}&&
1_A \w 1_A \ar@{}[rrdd]|{(3)} \ar[dd]^{1_A \w
\eta_f}\ar[rr]^-{{l_{1\!\!_{_A}}}={r_{_1\!\!_{_A}}}}&&1_A \ar[dd]^{\eta_f}\\\\
f^* \w f \ar[rr]_{(\xi^r_{i_h})^{-1}}&&h \w (f^* \w f)\ar[rr]_{\tau
\w (f^* \w f)}&&1_A \w (f^* \w f) \ar[rr]_{l_{f^* \w f}}&& f^* \w
f},$$ Square (2) commutes by naturality of composition, while Square
(3) commutes by naturality of $l$. We claim that Square (1) is also
commutative. Indeed, using that
\begin{equation}\label{unit}
m_f \cdot ((f^* \w f)\w \eta_f)=r_{f^* \w f}
\end{equation} since $m_f$ is the multiplication for the $A$-ring
$S_f$, we have:
\begin{align*}
&\xi^r_{i_h}\cdot (h \w \eta_f)\cdot (r_h)^{-1}=&\text{since}\,\,\xi^r_{i_h}= m_f \cdot (i_h \w (f^* \w f))\\
&=m_f \cdot (i_h \w (f^* \w f))\cdot (h \w \eta_f)\cdot (r_h)^{-1}
&\text{by naturality of composition}\\
&=m_f \cdot ((f^* \w f)\w \eta_f)\cdot (i_h \w 1_A)\cdot (r_h)^{-1}
&\text{by (\ref{unit})}\\
&=r_{f^* \w f}\cdot (i_h \w 1_A)\cdot (r_h)^{-1} &\text{by naturality of} \,\,r\\
&=i_h \cdot r_h \cdot (r_h)^{-1}\\
&=i_h.
\end{align*} Thus the diagram
$$
\xymatrix{ h \ar[r]^-{i_h} \ar[d]_{{r_{1\!\!_{_A}}}\cdot (\tau \w 1_A)
\cdot(r_h)^{-1}}& f^* \w f \ar[d]^{\lambda \w
f}\\
1_A \ar[r]_-{\eta_f}& f^* \w f }$$ commutes, and since the
composite ${r_{1\!\!_{_A}}}\cdot (\tau \w 1_A) \cdot(r_h)^{-1}$ is an
isomorphism, it follows from Lemma \ref{expull} that the diagram is
a pullback. Hence $\ol{\mathcal{D}}_f(\lambda)=[(h,i_h)]$, and thus
$\text{Ker}(\Omega_f)\subseteq \text{Im}(\ol{\mathcal{D}}_f).$

Now, if $i_h: h \to f^* \w f$ is such that there are an automorphism
$\lambda \in \text{\textbf{Aut}}_{[B,\,A]}(f^*)$ and a pullback
$$
\xymatrix{ h \ar[r]^-{i_h} \ar[d]_{p_h}& f^* \w f \ar[d]^{\lambda \w
f}\\
1_A \ar[r]_-{\eta_f}& f^* \w f \,,}$$ then clearly the 2-cell $p_h
: h \to 1_A$ is an isomorphism and thus $\Omega_f([(h,
i_h)])=[h]=[1_A]$. Thus $\text{Im}(\ol{\mathcal{D}}_f)\subseteq
\text{Ker}(\Omega_f)$ and hence $\text{Ker}(\Omega_f)=
\text{Im}(\overline{\mathcal{D}}_f).$ This completes the proof.
\end{proof}

\begin{example}\label{firmfirst}
Let $\varphi : R \to S$ be a homomorphism of firm rings as in Example \ref{Firm}. If $\varphi$ is injective, then we can apply Theorem \ref{exact} to the adjunction ${}_SS_R \dashv {}_RS_S$ in $\mathbf{Firm}$, and we get the exact sequence of groups
\begin{equation}\label{first}
\xymatrix{1 \ar[r] & \mathbf{Aut}({}_RR_R) \ar[r] & \mathbf{Aut}({}_SS_R) \ar[r]& \mathbf{Inv}_R(S) \ar[r] & \mathbf{Pic}(R),}
\end{equation}
where $\mathbf{Aut}({}_RR_R)$, (resp. $\mathbf{Aut}({}_SS_R)$) denote the group of $(R,R)$--bimodule (resp. $(S,R)$--bimodule) automorphisms of $R$ (resp. $S$),  $\mathbf{Inv}_R(S)$ is the group of invertible $R$--subbimodules of $S$, and $\mathbf{Pic}(R)$ is the Picard group of the ring $R$. The exact sequence \eqref{first} was obtained, as a generalization of the unital case \cite{Miyashita:1973}, in \cite[Proposition 1.4]{ElKaoutit/Gomez:2010} for any extension of rings with the same set of local units. Every such an extension is clearly an injective homomorphism of firm rings.
\end{example}

Given an arbitrary category $\mathbf{C}$, we write $\pi_0 (\mathbf{C})$ for the collection of the isomorphism
classes of objects of $\mathbf{C}$. For any $C \in \mathbf{C}$, $[C]$ denotes the class of $C$.  Clearly, for
any functor $S: \mathbf{C} \to \mathbf{D}$, the assignment $[C] \to [S(C)]$ yields a map
$\pi_0(S) : \pi_0(\mathbf{C}) \to \pi_0(\mathbf{D})$.

Quite obviously, the assignment $[h] \to [f\w h]$ yields a map $\text{\textbf{Pic}}(A) \xr{[f \w-]} \pi_0([A,B])$,
where $\pi_0([A,B])$ denotes the pointed set of the isomorphism
classes $[g]$ of 2-cells $g: A \to B$ with a distinguished class $[f]$.
Since $f \w 1_A \simeq f$, $[f \w-]$ is morphism of pointed sets.

\begin{theorem}\label{ex.pic} The following sequence of pointed sets
$$\mathfrak{I}^A_f\xr{\Omega_f} \emph{\textbf{Pic}}(A) \xr{[f \w-]} \pi_0([A,B])$$ is exact.
\end{theorem}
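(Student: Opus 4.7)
My plan is to verify the two inclusions $\text{Im}(\Omega_f) \subseteq \text{Ker}([f \w -])$ and $\text{Ker}([f \w -]) \subseteq \text{Im}(\Omega_f)$, where the ``kernel'' is understood as the preimage of the distinguished element $[f]$ in the pointed set $\pi_0([A,B])$. Both inclusions will follow essentially from results already established in the preceding subsections, so there is no substantial new calculation to do.

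For the first inclusion, pick $[(h,i_h)] \in \mathfrak{I}^A_f$; by definition $\Omega_f([(h,i_h)]) = [h]$, and I need to show $[f \w h] = [f]$ in $\pi_0([A,B])$. Since $\mathfrak{I}^A_f = \mathfrak{I}^{A,\,l}_f$, the 2-cell $\xi^l_{i_h}$ is an isomorphism, so Proposition \ref{conjugate} (equivalence of (i) and (ii)) yields that $\xi_{i_h} : f \w h \to f$ is an isomorphism. Hence $[f \w -]([h]) = [f \w h] = [f]$ in $\pi_0([A,B])$, which is exactly the statement that $[h]$ lies in the kernel.

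For the reverse inclusion, suppose $[h] \in \textbf{Pic}(A)$ satisfies $[f \w h] = [f]$. Then $h$ is invertible and there exists an isomorphism $\sigma : f \w h \to f$ in $[A,B]$. Proposition \ref{lifting} applies verbatim and produces a monomorphic 2-cell $i_h : h \to f^* \w f$ (namely the composite $h \xr{\eta_f \w h} f^* \w f \w h \xr{f^* \w \sigma} f^* \w f$) such that $[(h,i_h)] \in \mathfrak{I}^{A,\,l}_f = \mathfrak{I}^A_f$. Then $\Omega_f([(h,i_h)]) = [h]$, proving $[h] \in \text{Im}(\Omega_f)$.

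Since both inclusions follow immediately from Propositions \ref{conjugate} and \ref{lifting}, there is no serious obstacle; the only care needed is to interpret exactness correctly in the pointed-set setting (the target is only a pointed set, not a group, so the ``kernel'' means the fibre over the base point $[f]$, and a priori $\Omega_f$ need not have image equal to a subgroup of anything — it just lands in the fibre of $[f]$).
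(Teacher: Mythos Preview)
Your proof is correct and follows essentially the same route as the paper's. For the first inclusion the paper simply remarks that $\mathfrak{I}^A_f \subseteq \mathfrak{I}^l_{[A,A]}(\mathcal{S}_f)$ and declares the conclusion clear (implicitly using \cite[Remark 4.2]{GM}, which already gives the equivalence of (i) and (ii) in Proposition \ref{conjugate} without the invertibility hypothesis on $h$), while you invoke Proposition \ref{conjugate} explicitly; for the second inclusion both you and the paper appeal to Proposition \ref{lifting} in exactly the same way.
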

\begin{proof}Since $\mathfrak{I}^\A_f \subseteq \mathfrak{I}^l_{[A,A]}(S_f)$, it is clear that
$([f \w-]\cdot \Omega_f)([(h,i_{h})])=[f]$ for all $[(h,i_{h})]\in \mathfrak{I}^A_f$. So it remains to show
that if $[g] \in \textbf{Pic}(A)$ is such that $$[f \w-]([g])=[f\w g]= [f],$$ then there exists
$[(h,i_{h})]\in \mathfrak{I}^A_f$ with $[g]=\Omega_f([(h,i_{h})])=[h].$ Since $[f\w g]= [f]$, there is an
isomorphism $\sigma : f\w g \to f$ in $[A,B]$. It then follows from Proposition
\ref{lifting} that $[(g, i_g)]\in \mathfrak{I}^A_f$, where $i_g$ is the composite
$g \xr{\eta_f \w g}f^* \w f \w g \xr{f^* \w \sigma} f^* \w f.$ Then clearly $\Omega_f([(g,i_{g})])=[g]$.
\end{proof}

\subsection{Comonadicity}\label{comonadicity}

Recall from \cite[p. 172]{GM} that there is a map $$\Gamma_{f}:\mathfrak{I}^l_{[A,A]}(S_f) \to \textbf{End}_{B\texttt{-cor}}(\mathfrak{C}_f)$$
that takes $[(h, i_h)]\in \mathfrak{I}^l_{[A,A]}(S_f)$ to the composite
$$f \w f^* \xr{\xi^{-1}_{i\!_h}\w f^*} (f \w h) \w f^* \xr{\alpha_{f,h,f^*}}f \w (h \w f^*)
\xr{f \w \xi^*_{i\!_h}} f \w f^*.$$

\begin{proposition} \label{comonadic} Suppose that the functor $[A,f]=f \w -:[A,A]\to [A,B]$ is comonadic. Then
$\Gamma_{f}$ restricts to an isomorphism of groups
$$\overline{\Gamma}_{f}: \mathfrak{I}^A_f\to \emph{\textbf{Aut}}_{B\texttt{-cor}}(\mathfrak{C}_f).$$
\end{proposition}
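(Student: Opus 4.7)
The plan is to first verify that $\overline{\Gamma}_f$ is well-defined into $\mathbf{Aut}_{B\texttt{-cor}}(\mathfrak{C}_f)$ and is a group homomorphism, and then to construct its inverse using the comonadic descent equivalence. For well-definedness, for any $[(h,i_h)] \in \mathfrak{I}^A_f$ the 2-cell $\xi_{i_h}$ is an isomorphism by the definition of $\mathfrak{I}^l_{[A,A]}(\mathcal{S}_f)$ together with \cite[Remark 4.2]{GM}, and $\xi^*_{i_h}$ is an isomorphism by Proposition \ref{conjugate} since $h$ is invertible. Hence the formula
$$\Gamma_f([h,i_h]) = (f \w \xi^*_{i_h})\cdot \alpha \cdot (\xi_{i_h}^{-1} \w f^*)$$
produces an invertible 2-cell, so $\overline{\Gamma}_f$ really does land in $\mathbf{Aut}_{B\texttt{-cor}}(\mathfrak{C}_f)$; the multiplicativity of $\Gamma_f$ recorded in \cite{GM} makes $\overline{\Gamma}_f$ a group homomorphism.

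For bijectivity, I use the comonadicity assumption. Write $\mathbf{G}$ for the comonad $f \w f^* \w - : [A,B]\to [A,B]$ associated to the adjunction $f\w - \dashv f^*\w -$; its underlying $B$-coring is exactly $\mathfrak{C}_f$. Comonadicity of $f\w -$ means that the comparison $K : [A,A]\to [A,B]^{\mathbf{G}}$, $Z \mapsto (f \w Z,\, f \w \eta_f \w Z)$, is an equivalence of categories; its inverse assigns to a coalgebra $(X, \beta:X\to \mathbf{G} X)$ the equalizer in $[A,A]$ (which exists by the standing hypothesis on hom-categories) of the two 2-cells $f^*\w \beta,\ \eta_f \w f^*\w X : f^*\w X \to f^*\w f \w f^*\w X$. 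Given $\phi \in \mathbf{Aut}_{B\texttt{-cor}}(\mathfrak{C}_f)$, the coring axioms guarantee that $\beta_\phi := (\phi \w f)\cdot (f \w \eta_f)$ is a $\mathbf{G}$-coalgebra structure on $f$. Define $h_\phi := K^{-1}(f, \beta_\phi)$, equipped with the universal equalizing 2-cell $i_{h_\phi}:h_\phi \to f^* \w f$. The isomorphism $f\w h_\phi \simeq f$ provided by $K$ is identified, via the counit equation $\e_f\cdot \phi = \e_f$ together with the triangle identity, with $\xi_{i_{h_\phi}}$, so $[(h_\phi, i_{h_\phi})]\in \mathfrak{I}^l_{[A,A]}(\mathcal{S}_f)$; a careful bookkeeping of the equalizer data together with the definition of $\Gamma_f$ then yields $\Gamma_f([h_\phi, i_{h_\phi}]) = \phi$.

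Two checks remain: the invertibility of $h_\phi$ as a 1-cell in $\mathbb{B}$, and the injectivity of $\overline{\Gamma}_f$. For the former, apply the same construction to $\phi^{-1}$ to produce $h_{\phi^{-1}}$; multiplicativity of $\Gamma_f$ yields $\Gamma_f([h_\phi,i_{h_\phi}]\cdot[h_{\phi^{-1}},i_{h_{\phi^{-1}}}])=\phi\phi^{-1}=1=\Gamma_f([1_A,\eta_f])$, and the injectivity of $\Gamma_f$ on $\mathfrak{I}^l_{[A,A]}(\mathcal{S}_f)$ --- which is immediate from $K$ being an equivalence, because two subobject classes with the same image under $\Gamma_f$ give rise to isomorphic twisted $\mathbf{G}$-coalgebra structures on $f$ and hence to isomorphic objects in $[A,A]$ over $f^*\w f$ --- forces this product to equal $[1_A,\eta_f]$. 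In particular $h_\phi \w h_{\phi^{-1}} \cong 1_A$, and symmetrically $h_{\phi^{-1}}\w h_\phi \cong 1_A$, so $h_\phi$ is invertible and $[(h_\phi,i_{h_\phi})]\in \mathfrak{I}^A_f$; injectivity of $\overline{\Gamma}_f$ falls out of the same injectivity of $\Gamma_f$. The main obstacle is the diagram chase establishing $\Gamma_f([h_\phi, i_{h_\phi}]) = \phi$, which must interlock the universal property of the equalizer defining $h_\phi$, the triangle identities of the adjunction $f\dashv f^*$, and the coring axioms of $\phi$.
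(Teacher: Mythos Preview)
Your argument is correct, and the overall architecture---show $\overline{\Gamma}_f$ lands in automorphisms, then produce an inverse via the comparison equivalence $K$, then deduce invertibility of $h_\phi$ from multiplicativity and injectivity---is sound. However, you are taking a different and longer route than the paper.

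The paper's proof does not rebuild the descent correspondence by hand. It invokes \cite[Theorem 4.9]{GM} directly: under comonadicity of $f\w -$, the map $\Gamma_f:\mathfrak{I}^l_{[A,A]}(\mathcal{S}_f)\to \mathbf{End}_{B\texttt{-cor}}(\mathfrak{C}_f)$ is already known to be an \emph{isomorphism of monoids}. With this in hand, the only work left is to show that the group of units on each side match, i.e.\ that $\Gamma_f^{-1}(\mathbf{Aut}_{B\texttt{-cor}}(\mathfrak{C}_f))=\mathfrak{I}^A_f$. The inclusion $\mathfrak{I}^A_f\subseteq\Gamma_f^{-1}(\mathbf{Aut})$ is your well-definedness step. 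For the reverse inclusion the paper uses two short observations: first, if $\Gamma_f([(h,i_h)])$ is invertible then $f\w\xi^*_{i_h}$ is invertible, and conservativity of the comonadic functor $f\w-$ forces $\xi^*_{i_h}$ itself to be invertible, so $[(h,i_h)]\in\mathfrak{I}^r_{[A,A]}(\mathcal{S}_f)$; second, the same multiplicativity-and-inverse trick you use (now applied to the monoid isomorphism $\Gamma_f^{-1}$) shows $h\w h'\simeq 1_A\simeq h'\w h$, hence $h$ is invertible. This avoids constructing $h_\phi$ explicitly as an equalizer and sidesteps the ``main obstacle'' diagram chase you flag, since the identity $\Gamma_f(\Gamma_f^{-1}(\phi))=\phi$ is free once \cite[Theorem 4.9]{GM} is granted.

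In short: your proof is a partial re-derivation of \cite[Theorem 4.9]{GM} in situ, which buys self-containment at the cost of the bookkeeping you yourself identify as delicate; the paper simply quotes that result and reduces the proposition to a two-line check on units, using conservativity of $f\w-$ as the extra ingredient you did not explicitly isolate.
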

\begin{proof}  The functor $[A,f]$ is precomonadic if and only if the unit of the adjunction $[A,f]\dashv [A,f^*]$ is a
componentwise monomorphism. So $\eta_f : 1 \to f^*\circ f$ is right  pure in the monoidal category $[A,A]$
(meaning that $\eta_f \w h: 1\w h \to (f^*\circ f)\w h$ is monomorphic for all 1-cells $h:A \to A$), provided the functor
$[A,f]$ is (pre)comonadic. Consequently, according to \cite[Proposition 4.4]{GM}, $\mathfrak{I}^l_{[A,A]}(S_f)$ inherits
the structure of a monoid from $\text{\textbf{Sub}}_{[A,\,A]}(f^* \w f)$. Moreover, the map $\Gamma_{f}:\mathfrak{I}^l_{[A,A]}(S_f) \to
\textbf{End}_{B\texttt{-cor}}(\mathfrak{C}_f)$ is an isomorphism of monoids by
\cite[Theorem 4.9]{GM}. If $[(h, i_h)]\in \mathfrak{I}^A_f$, then $\xi^*_{i\!_h}$ is an isomorphism and hence is so $\Gamma_{f}([(h, i_h)])$.
Thus, $\Gamma_{f}$ restricts to a monomorphism $\overline{\Gamma}_{f}: \mathfrak{I}^A_f\to \textbf{Aut}_{B\texttt{-cor}}(\mathfrak{C}_f)$
of groups. To show that $\overline{\Gamma}_{f}$ is surjective, note first that if $[(h, i_h)] \in \mathfrak{I}^l_{[A,A]}(S_f)$
is such that $\Gamma_{f}([(h,i_h)])\in \textbf{Aut}_{B\texttt{-cor}}(\mathfrak{C}_f)$, then $[(h, i_h)]\in \mathfrak{I}^r_{[A,A]}(S_f)$.
Indeed, if the composite
$$\Gamma_{f}([(h,i_h)]):f \w f^* \xr{\xi^{-1}_{i\!_h}\w f^*} (f \w h) \w f^* \xr{\alpha_{f,h,f^*}} f \w (h \w f^*)
\xr{f \w \xi^*_{i\!_h}} f \w f^*$$ is an isomorphism, then $f \w\xi^*_{i\!_h}$ is also an isomorphism. But by hypothesis
the functor $[A,f]=f \w -$ is comonadic, and in particular conservative. Hence $\xi^*_{i\!_h}$ is an isomorphism too. Thus
$[(h, i_h)]\in \mathfrak{I}^r_{[A,A]}(S_f)$. Consider now any $\alpha \in \textbf{Aut}_{B\texttt{-cor}}(\mathfrak{C}_f)$.
Then, since $\Gamma^{-1}_{f}$ is a morphism of monoids, one has the following equalities in $\mathfrak{I}^l_{[A,A]}(S_f)$:
$$\Gamma^{-1}_{f}(\alpha) \cdot\Gamma^{-1}_{f}(\alpha^{-1})=\Gamma^{-1}_{f}(\alpha \cdot \alpha^{-1})=\Gamma^{-1}_{f}(1_{\mathcal{S}_f})=[(1_A, \eta_f)].$$ Similarly,  $\Gamma^{-1}_{f}(\alpha^{-1})\cdot \Gamma^{-1}_{f}(\alpha)=[(1_A, \eta_f)].$
If now $\Gamma^{-1}_{f}(\alpha)=[(h,i_{h})]$ and $\Gamma^{-1}_{f}(\alpha^{-1})=[(h',i_{h'})]$, then since by \cite[Proposition 4.4]{GM}, the product
$[(h,i_{h})]\cdot [(h', i_{h'})]$ in $\mathfrak{I}^l_{[A,A]}(S_f)$
is the pair $([h\w h'],i_{h\w h'})$, where $i_{h\w h'}$ is the composite
$$i_{h\w h'}: h\w h' \xr{i_{h}\w i_{h'}}   (f^*\w f) \w (f^*\w f)\xr{m_f} f^* \w f,$$ it follows that
$h \w h'\simeq 1_A$ and $h' \w h \simeq 1_A$. Hence $[h] \in \textbf{Pic}(A).$ Since $\Gamma_{f}(\Gamma^{-1}_{f}(\alpha))=\alpha$ is an isomorphism,
$\Gamma^{-1}_{f}(\alpha)\in \mathfrak{I}^r_{[A,A]}(S_f)$, as we have shown above. Thus, $\Gamma^{-1}_{f}(\alpha)\in \mathfrak{I}^A_f$,
and hence $\overline{\Gamma}_{f}$ is surjective. This completes the proof.
\end{proof}

\begin{remark} \label{intersection} We have proved in passing that, when the functor $$[A,f]=f \w -:[A,A]\to [A,B]$$
is comonadic, then
$$\mathfrak{I}^A_f=\mathfrak{I}^l_{[A,A]}(\mathcal{S}_f)\cap \mathfrak{I}^r_{[A,A]}(\mathcal{S}_f).$$
\end{remark}

As a corollary, we get:
\begin{proposition} \label{equality} Whenever the functor $$[A,f]=f \w -:[A,A]\to [A,B]$$ is comonadic,
we have an equality of groups $$\mathfrak{I}^A_f=(\mathfrak{I}^l_{[A,A]}(\mathcal{S}_f))^\times,$$
where $(-)^\times$ is the functor taking a monoid to its group of invertible elements.
\end{proposition}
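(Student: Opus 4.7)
The plan is to exploit the fact that every monoid isomorphism restricts to an isomorphism between the groups of units on each side, and then apply this observation to the isomorphism $\Gamma_f : \mathfrak{I}^l_{[A,A]}(\mathcal{S}_f) \to \textbf{End}_{B\texttt{-cor}}(\mathfrak{C}_f)$ of monoids recalled in the proof of Proposition \ref{comonadic}. This isomorphism is available because $[A,f] = f \w -$ is assumed to be comonadic, which by \cite[Proposition 4.4]{GM} endows $\mathfrak{I}^l_{[A,A]}(\mathcal{S}_f)$ with the monoid structure inherited from $\textbf{Sub}_{[A,A]}(f^* \w f)$, and by \cite[Theorem 4.9]{GM} makes $\Gamma_f$ an isomorphism of monoids.

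First I would verify the inclusion $\mathfrak{I}^A_f \subseteq (\mathfrak{I}^l_{[A,A]}(\mathcal{S}_f))^\times$. By Proposition \ref{group}, $\mathfrak{I}^A_f$ is a subgroup of $\mathfrak{I}^l_{[A,A]}(\mathcal{S}_f)$ equipped with the same monoid operation, so each of its elements already admits a two-sided inverse inside $\mathfrak{I}^l_{[A,A]}(\mathcal{S}_f)$, and thus is a unit of that monoid.

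For the reverse inclusion, let $[(h,i_h)] \in (\mathfrak{I}^l_{[A,A]}(\mathcal{S}_f))^\times$. Since $\Gamma_f$ is a monoid isomorphism, $\Gamma_f([(h,i_h)])$ is a unit of $\textbf{End}_{B\texttt{-cor}}(\mathfrak{C}_f)$ and hence belongs to $\textbf{Aut}_{B\texttt{-cor}}(\mathfrak{C}_f)$. By the surjectivity part of Proposition \ref{comonadic}, there exists $[(h',i_{h'})] \in \mathfrak{I}^A_f$ with $\overline{\Gamma}_f([(h',i_{h'})]) = \Gamma_f([(h,i_h)])$. As $\overline{\Gamma}_f$ is the restriction of the (injective) isomorphism $\Gamma_f$, this forces $[(h,i_h)] = [(h',i_{h'})]$, so $[(h,i_h)] \in \mathfrak{I}^A_f$.

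There is no genuine obstacle here; the whole argument is a two-line consequence of Proposition \ref{comonadic} together with the general principle that a monoid isomorphism identifies units with units. The only minor point to be careful about is keeping track of which monoid operation is in play on $\mathfrak{I}^l_{[A,A]}(\mathcal{S}_f)$ (namely the one from \cite[Proposition 4.4]{GM}, which agrees with the restriction from $\textbf{Sub}_{[A,A]}(f^* \w f)$ used in Proposition \ref{group}).
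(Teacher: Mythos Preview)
Your argument is correct and essentially coincides with the paper's own (implicit) proof: the paper presents Proposition \ref{equality} as an immediate corollary of Proposition \ref{comonadic}, and your reasoning via the monoid isomorphism $\Gamma_f$ and its restriction $\overline{\Gamma}_f$ is exactly the way that corollary is meant to be read. The only cosmetic difference is that the paper's proof of Proposition \ref{comonadic} already contains the explicit computation showing that any unit of $\mathfrak{I}^l_{[A,A]}(\mathcal{S}_f)$ has invertible underlying 1-cell and lies in $\mathfrak{I}^r_{[A,A]}(\mathcal{S}_f)$, so one could cite that step directly rather than passing through $\textbf{Aut}_{B\texttt{-cor}}(\mathfrak{C}_f)$; but this amounts to the same thing.
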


\begin{remark}\label{sufficientcomonadicity}
In \cite[Section 5]{GM} some sufficient conditions for the comonadicity of the functor $[A,f]=f \w -:[A,A]\to [A,B]$ are investigated. Concretely, if $[A,f]$ preserves equalizers and $\eta_f$ is right regular $A$--pure (see \cite[Definition 5.1]{GM}), then $[A,f]$ is comonadic. This generalizes the ``faithfully flat'' classical situation. The functor $[A,f]$ also becomes comonadic if $f$ is a separable $1$--cell (that is, if $\eta_f$ is a split monomorphism in the category $[A,A]$) (see \cite[Proposition 5.5]{GM}).
\end{remark}

\subsection{Duality}\label{duality}
Let $\mathbb{B}$ be a bicategory whose
hom-categories admit finite colimits and coimages and let $\eta_f, \e_f
: f \dashv f^* : B \to A$ be an adjunction in $\mathbb{B}$ such that $\e_f:f\w f^* \to 1_B$ is epimorphic in $[B,B]$.
 Let $\mathfrak{C}_f$ be the corresponding
$B$-coring.  Write $\mathcal{Q}^{B,\,l}_f$ (resp. $\mathcal{Q}^{B,r}_f$) for the subset of $\mathcal{Q}^l_{[B,B]}(\mathfrak{C}_f)$ (resp. $\mathcal{Q}^l_{[B,B]}(\mathfrak{C}_f)$) determined the elements $[(h, i_h)]$ with $h \in \text{\textbf{Pic}}(B)$.
Then $\mathcal{Q}^{B,\,l}_f=\mathcal{Q}^{B,r}_f$ and we write $\mathcal{Q}^B_f$ to denote either $\mathcal{Q}^{B,\,l}_f$
or $\mathcal{Q}^{B,\,r}_f$.

Recall that for any bicategory  $\mathbb{B}$, $\mathbb{B}^\text{co}$ is a bicategory
obtained from $\mathbb{B}$ by reversing 2-cells, i.e., $\mathbb{B}^\text{co}(A,B)=\mathbb{B}(A,B)^\text{op}$.
Applying now Theorems \ref{exact} and \ref{ex.pic} to the bicategory $\mathbb{B}^\text{co}$ gives:

\begin{theorem} \label{exact.d.}  We have an exact sequence of groups
$$1 \xr{}\emph{\text{\textbf{Aut}}}_{[B,\,B]}(1_B)\xr{\widehat{\omega}_0}\emph{\textbf{Aut}}_{[A,\,B]}(f)
\xr{\ol{\mathcal{D}}\!_{f^*}} \mathcal{Q}^B_{f} \xr{\Omega_{f^*}} \emph{\textbf{Pic}}(B),$$
and an exact sequence of pointed sets
$$\mathcal{Q}^B_f\xr{\Omega_{f^*}} \emph{\textbf{Pic}}(B) \xr{[f^* \w -]} \pi_0([B,A]).$$
Here,
\begin{itemize}
  \item $\widehat{\omega}_0(1_B \xr{s} 1_B)=f \xr{(l\!_f)^{-1}}1_B \w f \xr{s \w f} 1_B \w f \xr{l_f}f$,
  \item $\ol{\mathcal{D}}\!_{f^*}(f \xr{\sigma} f)=[(P,\pi_P)]$, where $\xymatrix{f \w f^* \ar[r]^{\e_f}\ar[d]_{\sigma \w f^*}
  &1_B \ar[d]\\ f\w f^* \ar[r]_{\pi_P}& P}$ is a pushout, and
  \item $\Omega_{f^*}([P,\pi_P])=[P]$.
\end{itemize}
\end{theorem}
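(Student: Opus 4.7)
The plan is to deduce both exact sequences by applying Theorems \ref{exact} and \ref{ex.pic} to the bicategory $\mathbb{B}^\text{co}$, and then to translate all of the data back to $\mathbb{B}$.

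First I would verify the hypotheses for invoking those theorems inside $\mathbb{B}^\text{co}$. The adjunction $(\eta_f, \e_f : f \dashv f^*)$ in $\mathbb{B}$ dualizes to an adjunction $(\e_f, \eta_f : f^* \dashv f)$ in $\mathbb{B}^\text{co}$, with $\e_f$ now playing the role of the unit and $\eta_f$ of the counit. A monomorphism in $[B,B]^\text{op}$ is exactly an epimorphism in $[B,B]$, so the standing assumption that $\e_f$ is epimorphic in $[B,B]$ is precisely the monomorphy of the unit of the new adjunction, as required by Theorem \ref{exact}. Similarly, the hom-categories of $\mathbb{B}^\text{co}$ admit finite limits and images precisely because those of $\mathbb{B}$ admit finite colimits and coimages.

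Next I would identify all the objects appearing in the two exact sequences produced by Theorems \ref{exact} and \ref{ex.pic} in $\mathbb{B}^\text{co}$ with the corresponding data in $\mathbb{B}$. The carrier of the ring $\mathcal{S}_{f^*}$ associated with the adjunction $f^* \dashv f$ in $\mathbb{B}^\text{co}$ is $f \w f^*$, and the whole ring object, viewed back in $[B,B]$, is nothing but the coring $\mathfrak{C}_f=(f\w f^*,\delta_f,\e_f)$ of \eqref{genring}. Accordingly, monomorphic subobjects of $\mathcal{S}_{f^*}$ computed in $[B,B]^\text{op}$ correspond bijectively to epimorphic quotient objects of $\mathfrak{C}_f$ computed in $[B,B]$, yielding the identification $\mathfrak{I}^{B}_{f^*}$ (taken in $\mathbb{B}^\text{co}$) $=\mathcal{Q}^{B}_f$ (as defined in Subsection \ref{duality}). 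Automorphism groups and Picard groups are invariant under 2-cell-reversal, so $\mathbf{Aut}_{[B,B]^\text{op}}(1_B)=\mathbf{Aut}_{[B,B]}(1_B)$, $\mathbf{Aut}_{[A,B]^\text{op}}(f)=\mathbf{Aut}_{[A,B]}(f)$, and $\mathbf{Pic}^\text{co}(B)=\mathbf{Pic}(B)$; likewise $\pi_0$ of a category equals $\pi_0$ of its opposite, so $\pi_0([B,A]^\text{op})=\pi_0([B,A])$.

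With these identifications in hand, Theorems \ref{exact} and \ref{ex.pic} deliver the two stated sequences. The explicit descriptions of $\widehat{\omega}_0$, $\ol{\mathcal{D}}_{f^*}$ and $\Omega_{f^*}$ are then obtained by transcribing the formulas for $\varpi_0$, $\ol{\mathcal{D}}_f$ and $\Omega_f$ from Subsections \ref{autoinvert} and \ref{exactPic} under the dualization: the 1-cell $f^*$ is replaced by $f$ (since $f$ now plays the role of the right adjoint, and $l_{f^*}$ becomes $l_f$), and, crucially, the pullback square defining $\mathcal{D}_f(\lambda)$ in $\mathbb{B}^\text{co}$ is by definition a pushout square in $\mathbb{B}$, which gives exactly the pushout displayed in the statement for $\ol{\mathcal{D}}_{f^*}$. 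The main obstacle is purely bookkeeping: making sure that every construction (left versus right action on $\mathcal{S}_f$, orientation of pullbacks versus pushouts, which object plays the role of $A$ versus $B$, and so on) is translated consistently between $\mathbb{B}$ and $\mathbb{B}^\text{co}$. No fresh categorical content is required beyond what has already been established.
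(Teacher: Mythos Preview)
Your proposal is correct and follows exactly the approach taken in the paper: the theorem is obtained by applying Theorems \ref{exact} and \ref{ex.pic} to $\mathbb{B}^{\text{co}}$ and translating the data back to $\mathbb{B}$. Your more detailed account of the dualization bookkeeping (hypotheses, identification of $\mathfrak{I}^B_{f^*}$ with $\mathcal{Q}^B_f$, pullbacks becoming pushouts) simply spells out what the paper leaves implicit.
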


When the functor $[B,f^*]$ is monadic, we have, by \cite[Theorem 4.11]{GM}, that the map
\[\overline{\Gamma}_{f^*}:\mathcal{Q}^l_{[B,B]}(\mathfrak{C}_f)\to
\text{End}_{A-ring}(\mathcal{S}_f),
\]
given by
\[
[(p, \pi_p)]\longrightarrow (f^* \w f \xrightarrow{f^* \w \xi_{\pi_p}}f^* \w (p \w f)
\xrightarrow{\alpha^{-1}_{f^*,p,f}}(f^* \w p)\w f
\xrightarrow{(\xi^*_{\pi_p})^{-1}\w f}f^* \w f
\]
is an isomorphism of monoids.

Now, the dual version of Proposition \ref{comonadic} yields

\begin{proposition}\label{monadic}
 Suppose that the functor $[B,f^*]=f^* \w -:[B,B]\to [B,A]$ is monadic. Then
$\Gamma_{f^*}$ restricts to an isomorphism of groups
$$\overline{\Gamma}_{f^*}: \mathcal{Q}^{B}_f\to \emph{\textbf{Aut}}_{A\texttt{-ring}}(\mathcal{S}_f).$$
\end{proposition}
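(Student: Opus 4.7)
The plan is to obtain the statement by applying Proposition \ref{comonadic} inside the dual bicategory $\mathbb{B}^\text{co}$, exactly as Theorem \ref{exact.d.} was deduced from Theorems \ref{exact} and \ref{ex.pic}. In $\mathbb{B}^\text{co}$ the adjunction $\eta_f,\varepsilon_f:f \dashv f^*$ becomes an adjunction $f^*\dashv f:A\to B$ whose unit is $\varepsilon_f$ (a monomorphism in $[B,B]^{\mathrm{op}}$ by the standing hypothesis of Subsection \ref{duality}) and whose counit is $\eta_f$. Since algebras in $[X,X]^{\mathrm{op}}$ are precisely coalgebras in $[X,X]$, the $B$-ring associated in $\mathbb{B}^\text{co}$ to this dual adjunction is nothing but the $B$-coring $\mathfrak{C}_f$ of $\mathbb{B}$, and the corresponding $A$-coring is the $A$-ring $\mathcal{S}_f$. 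Under this $\mathrm{co}$-duality subobjects in a hom-category become quotient objects, so $\mathfrak{I}^B_{f^*}$ computed inside $\mathbb{B}^\text{co}$ coincides with $\mathcal{Q}^B_f$ of $\mathbb{B}$, while $\mathbf{Aut}_{A\texttt{-cor}}$ inside $\mathbb{B}^\text{co}$ coincides with $\mathbf{Aut}_{A\texttt{-ring}}$ in $\mathbb{B}$. Finally, comonadicity of $f^*\w -$ inside $\mathbb{B}^\text{co}$ is, by definition, monadicity of that functor in $\mathbb{B}$, which is precisely our hypothesis.

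Next I would verify that the formula for $\Gamma_{f^*}$ displayed immediately before the statement is exactly the $\mathrm{co}$-dual of the map $\Gamma_f$ from Proposition \ref{comonadic}: under the dictionary above, the composite built from $\xi_{i_h}$ and $\xi^*_{i_h}$ that defines $\Gamma_f$ in $\mathbb{B}^\text{co}$ becomes, after reversing $2$-cells, exactly the composite built from $\xi_{\pi_p}$ and $\xi^*_{\pi_p}$ recalled for $\Gamma_{f^*}$. Granting this identification, Proposition \ref{comonadic} applied inside $\mathbb{B}^\text{co}$ immediately supplies an isomorphism of groups $\mathfrak{I}^B_{f^*}\to \mathbf{Aut}_{A\texttt{-cor}}(\mathfrak{C}_{f^*})$ there, which via the dictionary is the asserted isomorphism $\overline{\Gamma}_{f^*}:\mathcal{Q}^B_f\to \mathbf{Aut}_{A\texttt{-ring}}(\mathcal{S}_f)$.

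The main obstacle is purely bookkeeping: checking that the standing assumptions of Section \ref{SucesionesExactas} (finite limits and images in each hom-category, together with $\eta_f$ monic) transport correctly to their $\mathrm{co}$-versions (finite colimits and coimages, together with $\varepsilon_f$ epic), which are precisely what Subsection \ref{duality} assumes, and that the coherence constraints $\alpha$, $l$, $r$ interact correctly with the $2$-cell reversal. Once these routine verifications are made, no further argument is needed.
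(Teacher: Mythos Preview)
Your proposal is correct and follows exactly the approach the paper takes: the statement is introduced as ``the dual version of Proposition \ref{comonadic}'', i.e., Proposition \ref{comonadic} applied inside $\mathbb{B}^{\text{co}}$, with the dictionary you describe. Your additional bookkeeping remarks simply make explicit what the paper leaves implicit.
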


\begin{example}\label{firmsecond}
Let $\varphi : R \to S$ be a homomorphism of firm rings as in Example \ref{Firm}. Now, the adjunction ${}_SS_R \dashv {}_RS_S$ in $\mathbf{Firm}$ leads to the functor $S \otimes_S -: \mathbf{Firm}(S,S) \to \mathbf{Firm}(S,R)$ which is monadic according to Beck's Theorem. Moreover, the isomorphism  $S \otimes_S S \cong S$ becomes an isomorphism of $R$--rings, so that, by Proposition \ref{monadic}, we get an isomorphism of groups $\mathcal{Q}^S_S \cong \textbf{Aut}_{R-\texttt{ring}}(S)$.  We thus get from Theorem \ref{exact.d.} an exact sequence of groups
\[
\xymatrix{1 \ar[r] & \mathbf{Aut}({}_SS_S) \ar[r] &  \mathbf{Aut}({}_SS_R) \ar[r] & \mathbf{Aut}_{R-ring}(S) \ar[r] & \mathbf{Pic}(S),}
\]
which generalizes \cite[Proposition 2.3]{ElKaoutit/Gomez:2010}.
\end{example}

\section{Applications}

In this section, we apply the results from Section \ref{SucesionesExactas} to an adjoint pair in a bicategory of bimodules. This bicategory is built over an abstract monoidal category subject to some requirements which, of course, are fulfilled by the category of abelian groups, recovering the usual bicategory of bimodules.  With this tool at hand, we treat the case of a homomorphism of commutative algebras. In particular, the group isomorphism involving first Amitsur cohomology and the Picard group is proved.

\subsection{The bicategory of bimodules}\label{bimodules}  Suppose that $\V=(\V, \otimes, I)$ is
a  monoidal category such that the category $\V$ admits reflexive coequalizers, and that the latter are
preserved, as in the biclosed case, for instance,  by the functors $M \otimes - , - \otimes M : \V \to \V$, for all $M \in \V$. We will briefly recall basic notions and  results about (commutative) monoids and modules over them in
monoidal categories; all can be found in \cite{M}.

 For simplicity of exposition we treat $\otimes$ as strictly
associative and $I$ as a strict unit, which is justified by Mac Lane's coherence theorem \cite{M}.

Recall that, for a $\V$--algebra $\mathbf{A} = (A,m_A,e_A)$,   a \emph{left} $\mathbf{A}$-\emph{module}
is a pair $(M, \rho_M)$, where $M$ is an object of $\V$ and $\rho_M: A \otimes M \to M$ is a morphism in $\V$,
called the \emph{action} (or the $\mathbf{A}$-\emph{action}) on $M$, such that $\rho_M(m_A \otimes M)=
\rho_M(A \otimes \rho_M)$ and $\rho_M (e_A \otimes M)=1$.

The left $\mathbf{A}$-modules are the objects of a category ${_{\mathbf{A}}\!\!\V}$.
A morphism of left $\mathbf{A}$-modules is a morphism in $\V$ of the underlying $\V$-objects that
commutes with the actions of $\mathbf{A}$. In a similar manner, one defines the category $\V_{\mathbf{A}}$ of right $\mathbf{A}$-modules.

If $\textbf{A}$ and $\textbf{B}$ are algebras in $\V$, then an $(\textbf{A}, \textbf{B})$-bimodule $M$ in $\V$ is an object of $\V$
with commuting left $\textbf{A}$-module and right $\textbf{B}$-module structures.  The category of $(\textbf{A}, \textbf{B})$-bimodules
is denoted $_\textbf{A}\!\V_\textbf{B}$.

If  $(M,\rho_M) \in \V_{\mathbf{A}}$
and $(N,\rho_N) \in {_{\mathbf{A}}\!\!\V}$, then the \emph{tensor product of} $(M,\varrho_M)$ \emph{and} $(N,\rho_N)$
\emph{over} $\mathbf{A}$ is the object part of the following (reflexive) coequalizer
$${\xymatrix{ M \!\otimes \! A \otimes \! N \ar@{->}@<0.5ex>[rr]^-{\varrho_M \otimes N} \ar@
{->}@<-0.5ex> [rr]_-{M \otimes \rho_N}&& M \! \otimes \! N \ar[r]^{q_{M,N}}& M \! \otimes_\textbf{A} \!N.}}$$

\noindent Moreover, if $M \in {_{\textbf{B}} \!\V}\!_\textbf{A} $ and
$N \in {_\textbf{A} \!\!\V}_{\textbf{C}}$, then $ M\ww_\textbf{A} N\in {_{\textbf{B}} \!\V}_{\textbf{C}}$.
It then follows, in particular, that for a fixed $\V$-algebra $\textbf{A}$, the category
${_{\textbf{A}} \!\!\V}\!_{\textbf{A}}$ of $(\textbf{A},\textbf{A})$-bimodules in $\V$ is a
(non-symmetric) monoidal category with tensor product of two $(\textbf{A},\textbf{A})$-bimodules
being their tensor product over $\mathbf{A}$ and the unit for this tensor product being the
$(\textbf{A},\textbf{A})$-bimodule  $A$.

This allows us (see, for example, \cite{BS}) to construct the bicategory $\textbf{Bim}(\V)$ in which:
\begin{itemize}
  \item Objects are $\V$-algebras,
  \item $\textbf{Bim}(\V)(\textbf{A},\textbf{B})={_{\textbf{B}} \!\V}\!_{\textbf{A}}$;
  \item 2-cells are bimodule morphisms.
\end{itemize} Although the 1-cells in a bicategory  are usually denoted using the
arrow symbols, we sometimes, as here,  find it convenient to write
$\textbf{A} \nr \textbf{B}$ instead of $\textbf{A} \to \textbf{B}$. Thus, $M:
\textbf{A}\nr \textbf{B}$ means that $M$ is a $(\textbf{B},\textbf{A})$-bimodule.

The horizontal composite $N \w M$ of $M: \textbf{A} \nr \textbf{B}$ and $N :
\textbf{B}\nr \textbf{C} $ is the $(\textbf{C},\textbf{A})$-bimodule $N \ww\,_{\textbf{B}} M$, while
the vertical composition of two 2-cells is the ordinary composition of bimodule morphisms.

We write $\textbf{I}$ for the trivial $\V$-algebra $(I, r_I=l_I: I\ww I \to I, 1_I : I \to I)$. Then, for any $\V$-algebra
$\textbf{A}$, the category $\textbf{Bim}(\V)(\textbf{I}, \,\textbf{A})$ is (isomorphic to) the category of left
$\textbf{A}$-modules  $_\textbf{A}\!\!\V$, while the category $\textbf{Bim}(\V)(\textbf{A}, \, \textbf{I})$ is
(isomorphic to) the category of right $\textbf{A}$-modules $\V\!_\textbf{A}$. Moreover,
if $M: \textbf{A} \nr \textbf{B}$ is an $(\textbf{B},\textbf{A})$-bimodule, then  the diagrams
$$\xymatrix{\textbf{Bim}(\V)(\textbf{I}, \, \textbf{A}) \ar@{=}[d]\ar[r]^-{M
\w-}& \textbf{Bim}(\V)(\textbf{I}, \, \textbf{B}) \ar@{=}[d]\\
_\textbf{A}\!\!\V \ar[r]_{M \otimes_\textbf{A} -}& _{\textbf{B}}\!\V}$$ and
$$\xymatrix{\textbf{Bim}(\V)(\textbf{B}, \, \textbf{I})
\ar@{=}[d]\ar[r]^-{- \w M}& \text{\textbf{Bim}}(\V)(\textbf{A}, \,\textbf{I}) \ar@{=}[d]\\
\V_\textbf{B} \ar[r]_{-\otimes_{\textbf{B}} M}& \V_\textbf{A}\,,}$$ where the vertical morphisms
are the isomorphisms, are both commutative.

We henceforth suppose in addition that the category $\V$ admits, besides reflexive coequalizers,
all finite limits and image factorizations. Then, for any two $\V$-algebras $\textbf{A}$ and $\textbf{B}$,
the category ${_{\textbf{A}} \!\!\V}_{\textbf{B}} $, being the Eilenberg-Moore category for the monad
$$A \ww  -\ww \,B : \V \to\V,$$ also admits finite limits (see, for example, \cite{FB}) and image
factorizations (see \cite{A}). So we are in a position to apply Theorems \ref{exact} and \ref{ex.pic} to obtain
the following result.

\begin{theorem}\label{main.alg}Let $\V=(V, \ww, I)$ be a monoidal category with $\V$
admitting finite limits, image factorizations and reflexive coequalizers. Assume that the latter are preserved by the tensor product,
and let $\textbf{A},\textbf{B}$ be two $\V$-algebras. Then for any adjunction $\eta_M, \e_M:M \dashv M^*:\textbf{B}\nr \textbf{A}$
in $\emph{\textbf{Bim}}(\V)$ with monomorphic $\eta_M:A \to M^*\ww_\textbf{B} M=M^*\w M$, the
following sequence of groups
$$1 \xr{} \emph{\textbf{Aut}}_{{_{\textbf{A}}\!\V}\!_\textbf{A}}(A) \xr{\varpi_0}
\emph{\textbf{Aut}}_{{_{\textbf{A}}\!\V}_\textbf{B}}(M^*)\xr{\ol{\mathcal{D}}_M}
\mathfrak{I}^\textbf{A}_M\xr{\Omega_M}
\emph{\textbf{Pic}}(\textbf{A})$$ is exact. Moreover, the following sequence of pointed sets
$$\mathfrak{I}^A_M\xr{\Omega_M}\emph{\textbf{Pic}}(\textbf{A}) \xr{[M \ww_\textbf{A}-]} \pi_0({_{\textbf{B}}\!\V}\!_\textbf{A})$$ is exact.
\end{theorem}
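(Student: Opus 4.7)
The plan is to view Theorem \ref{main.alg} as an immediate specialization of Theorems \ref{exact} and \ref{ex.pic} to the bicategory $\mathbb{B} = \textbf{Bim}(\V)$, with the adjunction $M \dashv M^* : \textbf{B} \to \textbf{A}$ playing the role of $f \dashv f^*$. Once the standing hypotheses of Section \ref{SucesionesExactas} are verified for this $\mathbb{B}$, the two exact sequences in the statement will be obtained by simply reading off the general ones under the identifications of hom-categories supplied in subsection \ref{bimodules}.

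The main verification is that each hom-category $\textbf{Bim}(\V)(\textbf{A},\textbf{B}) = {}_{\textbf{B}}\V_{\textbf{A}}$ admits finite limits and image factorizations. As already recorded in the paragraph preceding the theorem statement, this category is the Eilenberg--Moore category of the monad $B \otimes - \otimes A : \V \to \V$, so finite limits lift from $\V$ to it by standard arguments (cf.\ \cite{FB}), while image factorizations lift by \cite{A}. The remaining hypothesis of Theorem \ref{exact}, namely that $\eta_f$ be monomorphic in $[A,A]$, is built into the statement as the monomorphicity of $\eta_M : A \to M^*\otimes_\textbf{B} M$ in ${}_\textbf{A}\V_\textbf{A}$.

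With the hypotheses in place, I would translate the abstract notation. Under the conventions of subsection \ref{bimodules}, $[\textbf{A},\textbf{A}] = {}_\textbf{A}\V_\textbf{A}$ with identity $1$-cell $1_\textbf{A}$ corresponding to $A$ viewed as an $(\textbf{A},\textbf{A})$-bimodule via multiplication, $[\textbf{B},\textbf{A}] = {}_\textbf{A}\V_\textbf{B}$, and $[\textbf{A},\textbf{B}] = {}_\textbf{B}\V_\textbf{A}$. Substituting these identifications into Theorem \ref{exact} yields
\[
1 \xr{} \textbf{Aut}_{{}_\textbf{A}\V_\textbf{A}}(A) \xr{\varpi_0} \textbf{Aut}_{{}_\textbf{A}\V_\textbf{B}}(M^*) \xr{\ol{\mathcal{D}}_M} \mathfrak{I}^\textbf{A}_M \xr{\Omega_M} \textbf{Pic}(\textbf{A}),
\]
while substituting into Theorem \ref{ex.pic} yields the pointed-set exact sequence terminating in $\pi_0({}_\textbf{B}\V_\textbf{A})$.

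I do not foresee any substantive obstacle: the genuine content lies entirely in the bicategorical machinery developed in Section \ref{SucesionesExactas}, and here it is merely being transported. The only real point deserving care is the orientation convention $\textbf{Bim}(\V)(\textbf{A},\textbf{B}) = {}_\textbf{B}\V_\textbf{A}$ (source algebra on the right, target on the left) when reading off the correct codomain hom-categories; once that bookkeeping is kept straight, the proof is a direct substitution into the two general theorems.
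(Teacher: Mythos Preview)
Your proposal is correct and follows essentially the same approach as the paper: the paper likewise observes that each ${}_{\textbf{A}}\V_{\textbf{B}}$ is the Eilenberg--Moore category of the monad $A \otimes - \otimes B$ on $\V$, hence inherits finite limits and image factorizations, and then states Theorem \ref{main.alg} as a direct application of Theorems \ref{exact} and \ref{ex.pic}. Your attention to the orientation convention is warranted but does not add anything beyond what the paper already handles implicitly.
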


\begin{example}\label{AB}
Each  morphism of $\V$--algebras $\iota:\textbf{A} \to \textbf{B}$ leads to two bimodules
$B_\iota:\textbf{A}\nr \textbf{B}$ and  $B^\iota:\textbf{B}\nr \textbf{A}$
which are both equal to $B$ as objects of $\V$ but with the bimodule structures defined by
$$(B\otimes B \xr{m_B}B, B\ww A \xr{B \ww \iota}B\otimes B \xr{m_B}B)$$ and
$$(A\ww B \xr{\iota \ww B}B\otimes B \xr{m_B}B,B\otimes B \xr{m_B}B).$$
In fact $B^\iota$ is right adjoint for $B_\iota$ in $\textbf{Bim}(\V)$ with
$$A \xr{\iota}B \simeq B\ww_\textbf{B} B=B^\iota \w B_\iota$$
as unit and $$B_\iota \w B^\iota=B\ww_\textbf{A}B \xr{\overline{m}_B}B$$ as counit.
Here $B\ww_\textbf{A}B \xr{\ol{m}_B}B$ is the unique morphism making the triangle
\[\xymatrix{B \ww B \ar[r]^-{q_{B,B}}\ar[rd]_{m_B}& B\ww_\textbf{A}B \ar[d]^{\ol{m}_B}\\
&B}\] commute.

It therefore follows that every morphism $\iota: \mathbf{A} \to \mathbf{B}$ of $\V$-algebras gives rise to two functors:
\begin{itemize}
  \item the \emph{forgetful functor} $\iota_*=B^\iota \w-: {_{\mathbf{B}}\!\V} \to {_{\mathbf{A}}\!\!\V},$ where
  for any left $\textbf{B}$-module $(M, \varrho_M)$,  $\iota_*(M, \varrho_M)$ is a left $\textbf{A}$-module via the action
$$A \otimes M \xr{\iota \otimes M} B \otimes M \xr{\varrho_M} M;$$
  \item the \emph{change-of-base functor} $\iota^*=B_\iota\w-: {_{\mathbf{A}}\!\V} \to {_{\mathbf{B}}\!\V},$ where for any
  (left) $\textbf{A}$-module $(N, \rho_N)$, $\iota^*(N, \rho_N)=B \otimes_A N$ and $B \otimes_A N$ is a (left) $\textbf{B}$-module
  via the action $$B \otimes B \otimes_A N \xr{m_B \otimes_A N} B \otimes_A N.$$
\end{itemize}
Since $B^\iota$ is right adjoint to $B_\iota$ in $\textbf{Bim}(\V)$, it follows that the forgetful functor $\iota_*$ is
right adjoint to the change-of-base functor functor $\iota^*$.

If we specialize Theorem \ref{main.alg} to the adjunction $B_\iota \dashv B^\iota$ in $\textbf{Bim}(\V)$,
we obtain the following exact sequence of groups:
\begin{equation}\label{seq.0}1 \xr{} \textbf{Aut}_{{_{\textbf{A}}\!\V}\!_\textbf{A}}(A) \xr{\varpi_0}
\textbf{Aut}_{{_{\textbf{A}}\!\V}_\textbf{B}}(B^\iota)\xr{\ol{\mathcal{D}}_{B_\iota}}
\mathfrak{I}^\textbf{A}_{B_\iota}\xr{\Omega_{B_\iota}} \textbf{Pic}(\textbf{A})\end{equation}
and the following exact sequence of pointed sets:
\begin{equation}\label{seq.1}
\mathfrak{I}^\textbf{A}_{B_\iota}\xr{\Omega_M}\textbf{Pic}(\textbf{A}) \xr{[B_\iota \ww_\textbf{A}-]} \pi_0({_{\textbf{B}}\!\V}\!_\textbf{A})
\end{equation}
\end{example}

Let now assume that $\V$ is a symmetric monoidal category with symmetry $\tau$. Recall that a $\V$-algebra is called \emph{commutative} if the multiplication map is unchanged when composed with the symmetry.

Given a morphism $\iota:\textbf{A} \to \textbf{B}$ of commutative $\V$-algebras, consider the
associated adjunction $B_\iota \dashv B^\iota$ in $\textbf{Bim}(\V)$. Write $\mathcal{S}_\iota$ for $\mathcal{S}_{\textbf{B}_\iota}$.
Then  $\mathcal{S}_\iota=B^\iota \ww_\textbf{B} B_\iota \simeq \textbf{B}$, where the left and right actions of
$\textbf{A}$ on $\textbf{B}$ are given by the compositions
$$\rho_l:A\ww B \xr{\iota \ww B}B \otimes B \xr{m_B}B \,\,\, \text{and}\,\,\, \rho_r:B\ww A \xr{B \ww \iota}B \otimes B \xr{m_B}B$$
respectively. Since $\iota$ is a morphism of commutative $\V$-algebras, these actions coincide (in the sense that
$\rho_r=\rho_l \cdot \tau_{B,A}$), and one concludes that
$\textbf{Sub}_{{_{\textbf{A}}\!\V}\!_\textbf{A}}(\mathcal{S}_{\iota})=\textbf{Sub}_{{_{\textbf{A}}\!\V}}(\mathcal{S}_{\iota})=
\textbf{Sub}_{\V\!_\textbf{A}}(\mathcal{S}_{\iota})$. Therefore
$\mathfrak{I}^l_{{_{\textbf{A}}\!\V}\!_\textbf{A}}(\mathcal{S}_\iota)=\mathfrak{I}^l_{{_{\textbf{A}}\!\V}}(\mathcal{S}_\iota)$.

Similarly, write $\mathfrak{C}_\iota$ for $\mathfrak{C}_{\textbf{B}_\iota}$. Then $\mathfrak{C}_\iota$ is the $(\textbf{B},\textbf{B})$-
bimodule $(B\ww_\textbf{A}B, m_B\ww_\textbf{A}B, B\ww_\textbf{A}m_B)$ equipped with
the coproduct $$B\ww_\textbf{A}e_\textbf{B}\ww_\textbf{A}B:B\ww_\textbf{A}B \to (B\ww_\textbf{A}B)\ww_\textbf{B}(B\ww_\textbf{A}B)\simeq B\ww_\textbf{A}B\ww_\textbf{A}B$$ and counit $m_B:B\ww_\textbf{A}B \to B$.

The unit  $e$ of $\textbf{A}$ can be seen as a morphism of commutative
$\V$-algebras $\textbf{I} \to \textbf{A}$. If $e$ is a monomorphism, using that ${_{\textbf{I}}\!\V}\!_\textbf{A}={\V}\!_\textbf{A}$
and that ${_{\textbf{A}}\!\!\V}_\textbf{I}={_{\textbf{A}}\!\!\V}$, we get from (\ref{seq.0}) and (\ref{seq.1})
the following exact sequences of groups
\begin{equation}\label{seq.2}
1 \xr{} \textbf{Aut}_{\V}(I) \xr{\varpi_0}
\textbf{Aut}_{\V_\textbf{A}}(A)\xr{\ol{\mathcal{D}}\!_A}
\mathfrak{I}^\textbf{I}_{A}\xr{\Omega_I} \textbf{Pic}(\textbf{I})
\end{equation}
and of pointed sets
\begin{equation}\label{seq.3}
\mathfrak{I}^\textbf{I}_{A}\xr{\Omega_A}\textbf{Pic}(\textbf{I}) \xr{[A \ww -]} \pi_0(_\textbf{A}\!\!\V).
\end{equation}

It is easy to see that $\mathfrak{I}^l_{\V}(\mathcal{S}_e)=\mathfrak{I}^l_{\V}(A)$ and that
$\mathfrak{I}^r_{\V}(\mathcal{S}_e)=\mathfrak{I}^r_{\V}(A)$.

\begin{proposition}\label{mon.ab} Let $\textbf{A}$ be a commutative $\V$-algebra with monomorphic unit $e:I \to A.$
Then $\mathfrak{I}^l_{\V}(A)$ is a commutative monoid, while  $\mathfrak{I}^\textbf{I}_{A}$ is an abelian group.
\end{proposition}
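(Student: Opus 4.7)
The plan is to apply the general framework of Section~\ref{SucesionesExactas} to the adjunction $A_e \dashv A^e$ in $\textbf{Bim}(\V)$ coming from the unit $e:\textbf{I}\to\textbf{A}$, which, because $\V$ is symmetric and $\textbf{A}$ is commutative, is a morphism of commutative $\V$-algebras. Since $\textbf{Bim}(\V)(\textbf{I},\textbf{I})=\V$, the ring $\mathcal{S}_e$ reduces to $A$ itself in $\V$, $\mathfrak{I}^l_\V(\mathcal{S}_e)=\mathfrak{I}^l_\V(A)$, and $\mathfrak{I}^{\textbf{I}}_{A_e}=\mathfrak{I}^{\textbf{I}}_A$; in particular, the group structure on $\mathfrak{I}^{\textbf{I}}_A$ is already supplied by Proposition~\ref{group}. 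Proving the proposition thus reduces to two points: (i) the monoid operation on $\textbf{Sub}_\V(A)$ restricts to $\mathfrak{I}^l_\V(A)$, and (ii) both the resulting monoid and its subgroup $\mathfrak{I}^{\textbf{I}}_A$ are commutative.

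For (i), given $(J_1,i_1),(J_2,i_2)\in\mathfrak{I}^l_\V(A)$, I form the image factorisation $J_1\ww J_2\xrightarrow{p_{12}} J_1 J_2\xrightarrow{i_{12}} A$ of $m_A\circ(i_1\ww i_2)$. A routine use of the associativity of $m_A$ gives
\[
\xi^l_{i_{12}}\circ (A\ww p_{12})\;=\;\xi^l_{i_2}\circ(\xi^l_{i_1}\ww J_2),
\]
which is an isomorphism because $\xi^l_{i_1}$ and $\xi^l_{i_2}$ are. Since $A\ww-$ preserves reflexive coequalisers it preserves regular epimorphisms, so $A\ww p_{12}$ is epic. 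The displayed identity also exhibits $A\ww p_{12}$ as split monic, with retraction $(\xi^l_{i_2}\circ(\xi^l_{i_1}\ww J_2))^{-1}\circ\xi^l_{i_{12}}$. A morphism that is both epic and split monic is invertible; hence $A\ww p_{12}$ is an isomorphism, and then so is $\xi^l_{i_{12}}$, i.e. $(J_1 J_2,i_{12})\in\mathfrak{I}^l_\V(A)$.

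For (ii), the symmetry isomorphism $\tau_{J_1,J_2}:J_1\ww J_2\to J_2\ww J_1$, together with the naturality of $\tau$ and the commutativity of $\textbf{A}$ (so that $m_A\circ\tau_{A,A}=m_A$), yields
\[
m_A\circ(i_2\ww i_1)\circ\tau_{J_1,J_2}\;=\;m_A\circ\tau_{A,A}\circ(i_1\ww i_2)\;=\;m_A\circ(i_1\ww i_2),
\]
so the images of the two morphisms $m_A\circ(i_1\ww i_2)$ and $m_A\circ(i_2\ww i_1)$ coincide as subobjects of $A$. This exhibits the product of $\mathfrak{I}^l_\V(A)$ as commutative. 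The group $\mathfrak{I}^{\textbf{I}}_A$ inherits this commutativity, since its product from Proposition~\ref{group} is represented by the very same morphism $m_A\circ(i_1\ww i_2)$ (already monic in the invertible case, so the image construction is vacuous there).

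The step I expect to be the main obstacle is (i): the proof of Proposition~\ref{group} relied on the invertibility of the $1$-cells to secure the monicity of the tensored $2$-cell, and that argument is unavailable here. Instead one must exploit the preservation of regular epimorphisms by $A\ww-$ together with the ``split monic $+$ epic $\Rightarrow$ iso'' principle to establish closure of $\mathfrak{I}^l_\V(A)$ under the product. Once (i) is in place, (ii) is a clean consequence of the symmetric monoidal structure and the commutativity of $\textbf{A}$.
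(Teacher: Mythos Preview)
Your proof is correct. The paper's own argument is much shorter: it simply takes the monoid structure on $\mathfrak{I}^l_\V(A)$ as already established (closure under products being inherited from the results of \cite{GM} cited at the opening of Section~\ref{SucesionesExactas} and in the proof of Proposition~\ref{group}), observes that the symmetry of $\V$ together with the commutativity of $\mathbf{A}$ makes this monoid commutative, and then invokes Proposition~\ref{group} to conclude that the subgroup $\mathfrak{I}^{\mathbf{I}}_A$ is abelian. Your part~(ii) is exactly that argument.

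Your part~(i), however, is genuine additional content not present in the paper's proof. You supply a self-contained closure argument: the identity $\xi^l_{i_{12}}\circ(A\ww p_{12})=\xi^l_{i_2}\circ(\xi^l_{i_1}\ww J_2)$ exhibits $A\ww p_{12}$ as split monic, while preservation of reflexive coequalizers by $A\ww-$ makes it (regular) epic---this step is legitimate because $\V$ has finite limits, hence kernel pairs, so every regular epi is the coequalizer of a reflexive pair. Your route therefore works under the bare hypothesis that $e$ is monomorphic, without appealing to the right-purity condition that the \cite[Proposition~4.4]{GM} argument (as invoked in the proof of Proposition~\ref{comonadic}) uses. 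The trade-off is a paragraph of work the paper simply delegates to its companion reference.
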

\begin{proof} Since $\V$ is symmetric, the monoid structure on $\mathfrak{I}^l_{\V}(A)$ is easily seen to be
commutative. This implies -- since by Proposition \ref{group} the monoid structure on $\mathfrak{I}^l_{\V}(A)$
restricts to the group structure on $\mathfrak{I}^\textbf{I}_{A}$ -- that the group $\mathfrak{I}^\textbf{I}_{A}$
is abelian.
\end{proof}

\begin{lemma}\label{l=r} For
any commutative $\V$-algebra $\textbf{A}$,  $\mathfrak{I}^l_{\V}(A)=\mathfrak{I}^r_{\V}(A)$.
\end{lemma}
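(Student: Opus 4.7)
The plan is to show that for any subobject $[(J, i_J)]$ of $A$, the two morphisms $\xi^l_{i_J}$ and $\xi^r_{i_J}$ differ only by composition with the symmetry isomorphism, so one is an isomorphism iff the other is.

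First I would unpack the definitions: $\xi^l_{i_J}$ is the composite $A \ww J \xr{A \ww i_J} A \ww A \xr{m_A} A$ and $\xi^r_{i_J}$ is the composite $J \ww A \xr{i_J \ww A} A \ww A \xr{m_A} A$. The key geometric fact is naturality of the symmetry $\tau$, which gives the commutative square
\begin{equation*}
\xymatrix{J \ww A \ar[r]^-{i_J \ww A}\ar[d]_{\tau_{J,A}} & A \ww A \ar[d]^{\tau_{A,A}} \\ A \ww J \ar[r]_-{A \ww i_J} & A \ww A\,.}
\end{equation*}

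Composing this square with $m_A$ on the right and invoking commutativity of $\textbf{A}$ (i.e.\ $m_A \cdot \tau_{A,A} = m_A$) yields the identity
\begin{equation*}
\xi^l_{i_J} \cdot \tau_{J,A} \;=\; m_A \cdot (A \ww i_J) \cdot \tau_{J,A} \;=\; m_A \cdot \tau_{A,A} \cdot (i_J \ww A) \;=\; m_A \cdot (i_J \ww A) \;=\; \xi^r_{i_J}.
\end{equation*}
Since $\tau_{J,A}$ is always an isomorphism, $\xi^l_{i_J}$ is an isomorphism if and only if $\xi^r_{i_J}$ is. Hence a representative $(J, i_J)$ lies in $\mathfrak{I}^l_\V(A)$ iff it lies in $\mathfrak{I}^r_\V(A)$, giving the desired equality of subsets of $\textbf{Sub}_\V(A)$.

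There is no real obstacle here; the only thing to double-check is that the naturality of $\tau$ is being applied to the correct morphism $i_J \otimes A$ (rather than some other variant), and that the hypothesis of commutativity of $\textbf{A}$ is used only in its standard form $m_A \cdot \tau_{A,A} = m_A$. Both are routine once the symmetry square is in place.
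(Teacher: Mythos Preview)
Your proof is correct and essentially identical to the paper's: both use naturality of the symmetry together with $m_A \cdot \tau_{A,A} = m_A$ to show that $\xi^l_{i_J}$ and $\xi^r_{i_J}$ differ by composition with the invertible $\tau$. The only cosmetic difference is that the paper composes with $\tau_{A,J}$ to obtain $\xi^r_{i_J}\cdot \tau_{A,J}=\xi^l_{i_J}$, whereas you use $\tau_{J,A}$ to get $\xi^l_{i_J}\cdot \tau_{J,A}=\xi^r_{i_J}$.
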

\begin{proof}For any subject  $i\!_J: J\to A$ of $A$, consider the diagram
\[\xymatrix @C=0.7in @R=0.3in{ A \ww J \ar[d]_{\tau_{A,J}} \ar[r]^-{A\ww i\!_J} & A\ww A \ar[d]^{\tau_{A,A}}\\
J \ww A \ar@{-->}[rd]_{\xi^r_{i\!_J}}\ar[r]^-{i_J\ww A} & A\ww A \ar[d]^{m_A}\\
&A}\] in which the rectangle commutes by naturality of $\tau$. Since $\textbf{A}$ is commutative, $m_A \cdot \tau_{A,A}=m_A$,
and hence $\xi^r_{i\!_J}\cdot \tau_{A,J}=m_A \cdot \tau_{A,A} \cdot (A\ww i_J)=m_A  \cdot (A\ww i_J)=\xi^l_{i\!_J}$.
Thus $\xi^r_{i\!_J}\cdot \tau_{A,J}=\xi^l_{i\!_J}$ and hence $\xi^r_{i\!_J}$ is an isomorphism
(i.e. $[(i\!_J, J)]\in \mathfrak{I}^r_{\V}(A)$)) iff $\xi^l_{i\!_J}$ is so (i.e. $[(i\!_J, J)]\in \mathfrak{I}^l_{\V}(A)$).
Therefore, $\mathfrak{I}^l_{\V}(A)=\mathfrak{I}^r_{\V}(A)$.
\end{proof}

\begin{proposition}\label{end=aut} Let $\mathbf{A}$ be a commutative $\V$-algebra such that the functor
$A \ww -:\V \to {_{\mathbf{A}}\!\V}$ is comonadic. Then
$$\mathbf{End}_{\mathbf{A}\texttt{-cor}}(\mathfrak{C}_e)=\mathbf{Aut}_{\mathbf{A}\texttt{-cor}}(\mathfrak{C}_e).$$
\end{proposition}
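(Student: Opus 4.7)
The plan is to reduce the statement to a combination of Proposition \ref{comonadic}, Remark \ref{intersection}, and Lemma \ref{l=r}, exploiting commutativity to collapse the ``left-invertible'' and ``right-invertible'' conditions.

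First, I would set up the relevant adjunction. Viewing the unit $e:\textbf{I}\to \textbf{A}$ as a morphism of commutative $\V$-algebras, it gives rise (as in Example \ref{AB}) to the adjunction $A_e\dashv A^e:\textbf{A}\nr \textbf{I}$ in $\textbf{Bim}(\V)$, for which the functor $[\textbf{I},A_e]=A_e\ww_{\textbf{I}} -$ coincides with $A\ww -:\V\to {_{\mathbf{A}}\!\V}$, and this is comonadic by hypothesis. The associated $\textbf{A}$-coring is $\mathfrak{C}_e$, and the associated $\textbf{I}$-ring is $\mathcal{S}_e \simeq A$ as a $\V$-algebra, so that $\mathfrak{I}^l_{[\textbf{I},\textbf{I}]}(\mathcal{S}_e)=\mathfrak{I}^l_{\V}(A)$ and $\mathfrak{I}^r_{[\textbf{I},\textbf{I}]}(\mathcal{S}_e)=\mathfrak{I}^r_{\V}(A)$.

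Next, I would combine the relevant results about $\Gamma_e$ and its restriction. By the monoid isomorphism \cite[Theorem 4.9]{GM} used in the proof of Proposition \ref{comonadic}, the map $\Gamma_e$ gives an isomorphism of monoids
\[
\Gamma_e:\mathfrak{I}^l_{\V}(A)\;\xrightarrow{\;\simeq\;}\;\mathbf{End}_{\mathbf{A}\texttt{-cor}}(\mathfrak{C}_e),
\]
while by Proposition \ref{comonadic} its restriction $\overline{\Gamma}_e$ is an isomorphism of groups
\[
\overline{\Gamma}_e:\mathfrak{I}^{\textbf{I}}_{A_e}\;\xrightarrow{\;\simeq\;}\;\mathbf{Aut}_{\mathbf{A}\texttt{-cor}}(\mathfrak{C}_e).
\]
Since $\overline{\Gamma}_e$ is literally the restriction of $\Gamma_e$ to the subset $\mathfrak{I}^{\textbf{I}}_{A_e}\subseteq \mathfrak{I}^l_{\V}(A)$, to conclude the desired equality $\mathbf{End}_{\mathbf{A}\texttt{-cor}}(\mathfrak{C}_e)=\mathbf{Aut}_{\mathbf{A}\texttt{-cor}}(\mathfrak{C}_e)$ it suffices to show
\[
\mathfrak{I}^{\textbf{I}}_{A_e}\;=\;\mathfrak{I}^l_{\V}(A).
\]

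The key point, where commutativity enters, is Remark \ref{intersection} together with Lemma \ref{l=r}. By Remark \ref{intersection} (applicable because $A\ww -$ is comonadic), one has
\[
\mathfrak{I}^{\textbf{I}}_{A_e}\;=\;\mathfrak{I}^l_{\V}(A)\,\cap\,\mathfrak{I}^r_{\V}(A).
\]
Because $\textbf{A}$ is commutative, Lemma \ref{l=r} gives $\mathfrak{I}^l_{\V}(A)=\mathfrak{I}^r_{\V}(A)$, so the intersection above equals $\mathfrak{I}^l_{\V}(A)$. This yields the required equality $\mathfrak{I}^{\textbf{I}}_{A_e}=\mathfrak{I}^l_{\V}(A)$, and transporting along $\Gamma_e$ produces $\mathbf{End}_{\mathbf{A}\texttt{-cor}}(\mathfrak{C}_e)=\mathbf{Aut}_{\mathbf{A}\texttt{-cor}}(\mathfrak{C}_e)$.

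There is no substantial technical obstacle here; the proof is essentially a bookkeeping argument that assembles previously established results. The one thing worth checking carefully is the identification of $\mathcal{S}_e$ and $\mathfrak{C}_e$ in Bim$(\V)$ with the familiar objects ($A$ as a $\V$-algebra and the Sweedler-type coring $A\ww A$ as an $\mathbf{A}$-coring), so that the subobjects appearing in Remark \ref{intersection} really are the $\mathfrak{I}^{l}_{\V}(A)$ and $\mathfrak{I}^{r}_{\V}(A)$ to which Lemma \ref{l=r} applies.
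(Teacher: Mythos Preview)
Your proof is correct and follows essentially the same approach as the paper: both use \cite[Theorem 4.9]{GM} to identify $\mathbf{End}_{\mathbf{A}\texttt{-cor}}(\mathfrak{C}_e)$ with $\mathfrak{I}^l_{\V}(A)$, Proposition \ref{comonadic} to identify $\mathbf{Aut}_{\mathbf{A}\texttt{-cor}}(\mathfrak{C}_e)$ with $\mathfrak{I}^{\textbf{I}}_{A}$, and then Remark \ref{intersection} together with Lemma \ref{l=r} to conclude that these two sets coincide. The only differences are notational (you write $\mathfrak{I}^{\textbf{I}}_{A_e}$ where the paper writes $\mathfrak{I}^{\textbf{I}}_{A}$) and expository.
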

\begin{proof} Since the functor $A \ww -:\V \to {_{\mathbf{A}}\!\V}$ is assumed to be  comonadic,
the map $$\Gamma\!_{A}:\mathfrak{I}^l_{\V}(A)=\mathfrak{I}^l_{\V}(\mathcal{S}_e) \to \mathbf{End}_{\mathbf{A}\texttt{-cor}}(\mathfrak{C}_e)$$
is an isomorphism of monoids by \cite[Theorem 4.9]{GM}. But since
\begin{itemize}
  \item the monoid isomorphism $\Gamma\!_{A}:\mathfrak{I}^l_{\V}(A) \to
\mathbf{End}_{\mathbf{A}-\texttt{cor}}(\mathfrak{C}_e)$ restricts to an isomorphism
$\ol{\Gamma}\!_{A}: \mathfrak{I}_A^\textbf{I}\to \mathbf{Aut}_{\mathbf{A}-\texttt{cor}}(\mathfrak{C}_e)$
of groups by Proposition \ref{comonadic}, and
  \item $\mathfrak{I}^\mathbf{I}_A=\mathfrak{I}^l_{\V}(A)\cap \mathfrak{I}^r_{\V}(A)=\mathfrak{I}^l_{\V}(A)$
  by Remark\ref{intersection} and by Lemma \ref{l=r},
\end{itemize} it follows that $\Gamma\!_{A}=\ol{\Gamma}\!_{A}$, and hence $\mathbf{Aut}_{\mathbf{A}\texttt{-cor}}(\mathcal{S}_e)=\mathbf{End}_{\mathbf{A}\texttt{-cor}}(\mathfrak{C}_e)$.
\end{proof}

\begin{remark} \label{commutative.1} Since for any commutative $\V$-algebra $\textbf{A}$, ${_{\mathbf A} \!\V}$
is a symmetric monoidal category with tensor product $-\ww_\textbf{A}-$ and monoidal unit $(A,m_A)$, and since the monoid of endomorphisms
of the monoidal unit of any monoidal category is commutative (e.g., (\cite[ 1.3.3.1]{SR})), it follows that
$\textbf{End}_{_\textbf{A}\!\V}(A,m_A)$ is a commutative monoid, and $\textbf{Aut}_{_\textbf{A}\!\V}(A)$
is an abelian group.
Since $\textbf{A}=\textbf{A}^{\text{op}}$ for any commutative $\V$-algebra $\textbf{A}$,
it follows that ${_{\textbf{A}}\!\!\V}\!_\textbf{A}={_{\textbf{A}\ww\textbf{A}}}\!\!\V$; and since $\textbf{A}\ww\textbf{A}$ is
again a commutative $\V$-algebra, we get that $\textbf{End}_{{_{\textbf{A}}\!\V}\!_\textbf{A}}(A\ww A)$
is a commutative monoid. Then the inclusions
$$\textbf{Aut}_{\textbf{A}\texttt{-cor}}(\mathfrak{C}_e)\subseteq \textbf{End}_{\textbf{A}\texttt{-cor}}(\mathfrak{C}_e)\subseteq  \textbf{End}_{{_{\textbf{A}}\!\!\V}\!_\textbf{A}}(A\ww A)$$ imply that $\textbf{End}_{\textbf{A}\texttt{-cor}}(\mathfrak{C}_e)$
is a commutative monoid, and that $\textbf{Aut}_{\textbf{A}\texttt{-cor}}(\mathfrak{C}_e)$ is an abelian group.
\end{remark}

\subsection{Amitsur cohomology and Picard group}\label{Picard} We still assume that $\V$ is symmetric with symmetry $\tau$, and also that $\V$ admits reflexive coequalizers that are preserved by the tensor product, and all finite limits and image factorizations.

For a commutative algebra $\mathbf{A} = (A,m,e)$ in $\V$, write $\textbf{Pic}^{c}(\textbf{A})$ for the subgroup of $\textbf{Pic}(\textbf{A})$ consisting of all classes of invertible
$(\textbf{A},\textbf{A})$-bimodules $(M, \rho_l:A\ww M \to M, \rho_r:M\ww A \to M)$ such that
$\rho_r=\tau_{A,M}\cdot \rho_l$. Then $\textbf{Pic}^{c}(\textbf{A})$ is easily seen to be an abelian group. Moreover,
given a morphism $\iota : \textbf{A}\to \textbf{B}$ of commutative $\V$-algebras,
$$\textbf{Pic}^{c}(\iota):\textbf{Pic}^{c}(\textbf{A})\to \textbf{Pic}^{c}(\textbf{B})$$
defined  by $\textbf{Pic}^{c}(\iota)([P])=[B\ww_\textbf{A} P]$, is a homomorphism of abelian groups.

It is clear that $\textbf{Pic}^{c}(\textbf{I})=\textbf{Pic}(\textbf{I})$.
It is also clear that $[A \ww -]$  factors through $\textbf{Pic}^{c}(e):\textbf{Pic}^{c}(\textbf{I}) \to \textbf{Pic}^{c}(\textbf{A})$,
i.e. the following diagram
\[\xymatrix{ \textbf{Pic}^{c}(\textbf{I}) \ar[d]_{\textbf{Pic}^{c}(e)} \ar[r]^-{[A \ww -]}& \pi_0(_\textbf{A}\!\!\V)\\
\textbf{Pic}^{c}(\textbf{A}) \ar@{^{(}->}[ru]&}\,,\] where the unlabeled
morphism is the canonical embedding, is commutative. It then follows -- since (\ref{seq.3}) is an exact sequence of pointed sets -- that
\begin{equation}\label{seq.4}
\mathfrak{I}^\textbf{I}_{A}\xr{\Omega_A}\textbf{Pic}^{c}(\textbf{I}) \xr{\textbf{Pic}^{c}(e)} \textbf{Pic}^{c}(\textbf{A})
\end{equation} is an exact sequence of abelian groups, provided $e:I \to A$ is monomorphic.

\begin{theorem}
If  $\textbf{A}$ is such that the functor $A \ww -:\V \to {_{\mathbf{A}}\!\V}$
is comonadic, then there exists an exact sequence of abelian groups
\begin{equation}\label{seq.5}
0 \xr{} \mathbf{Aut}_{\V}(\mathbf I) \xr{\varpi_0} \mathbf{Aut}_{\V_\mathbf{A}}(A)\xr{\kappa_A}
\mathbf{Aut}_{\mathbf{A}-\texttt{cor}}(\mathfrak{C}_e) \xr{o_A} \mathbf{Pic}^{c}(\mathbf{I}) \xr{\mathbf{Pic}^{c}(e)} \mathbf{Pic}^{c}(\mathbf{A}).
\end{equation}
\end{theorem}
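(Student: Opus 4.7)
The plan is to splice the two previously established exact sequences \eqref{seq.2} and \eqref{seq.4} at the common middle term $\mathfrak{I}^{\mathbf{I}}_A$, and then transport that middle term into $\mathbf{Aut}_{\mathbf{A}-\texttt{cor}}(\mathfrak{C}_e)$ via the group isomorphism provided by Proposition \ref{comonadic}. First I note that the hypothesis ``$A\ww -\colon\V\to {_{\mathbf{A}}\!\V}$ is comonadic'' is in particular precomonadic, so the unit of the corresponding adjunction $A\ww -\dashv U$ (whose right adjoint is the forgetful functor) is a componentwise monomorphism. Evaluating this unit at $I$ recovers the morphism $e\colon I\to A$, so $e$ is monomorphic. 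This validates the hypotheses needed to invoke \eqref{seq.2}, \eqref{seq.4} and Proposition \ref{mon.ab} for the adjunction $A\dashv A^{*}$ in $\mathbf{Bim}(\V)$ associated with $e$.

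Next, Proposition \ref{comonadic} applied to $A\dashv A^{*}$ (with the roles $A\leftrightarrow \mathbf{I}$, $B\leftrightarrow \mathbf{A}$, $f\leftrightarrow A$) yields a group isomorphism $\overline{\Gamma}_A\colon \mathfrak{I}^{\mathbf{I}}_A \xrightarrow{\ \sim\ }\mathbf{Aut}_{\mathbf{A}-\texttt{cor}}(\mathfrak{C}_e)$. I then define
$$\kappa_A:=\overline{\Gamma}_A\circ\overline{\mathcal{D}}_A \quad\text{and}\quad o_A:=\Omega_A\circ\overline{\Gamma}_A^{-1}.$$
Exactness of \eqref{seq.5} at each interior position now follows formally. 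At $\mathbf{Aut}_{\V}(\mathbf{I})$ and at $\mathbf{Aut}_{\V_\mathbf{A}}(A)$ it is inherited verbatim from \eqref{seq.2}, because post-composition with the isomorphism $\overline{\Gamma}_A$ preserves kernels and images. At $\mathbf{Aut}_{\mathbf{A}-\texttt{cor}}(\mathfrak{C}_e)$ I compute
$$\ker(o_A)=\overline{\Gamma}_A\bigl(\ker\Omega_A\bigr)=\overline{\Gamma}_A\bigl(\operatorname{Im}\overline{\mathcal{D}}_A\bigr)=\operatorname{Im}\kappa_A,$$
where the middle equality is exactness of \eqref{seq.2} at $\mathfrak{I}^{\mathbf{I}}_A$, and the outer ones use that $\overline{\Gamma}_A$ is a bijective group homomorphism. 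At $\mathbf{Pic}^{c}(\mathbf{I})$ it follows from \eqref{seq.4}, together with the identity $\operatorname{Im}(o_A)=\operatorname{Im}\Omega_A$, which holds because $\overline{\Gamma}_A^{-1}$ is surjective.

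Finally, I verify that all five groups are abelian so that \eqref{seq.5} really is an exact sequence of abelian groups: $\mathbf{Aut}_\V(\mathbf{I})$ is the group of units of the endomorphism monoid of the monoidal unit, which is commutative by the standard Eckmann--Hilton argument; $\mathbf{Aut}_{\V_\mathbf{A}}(A)$ and $\mathbf{Aut}_{\mathbf{A}-\texttt{cor}}(\mathfrak{C}_e)$ are abelian by Remark \ref{commutative.1}; and $\mathbf{Pic}^{c}(\mathbf{I})$ and $\mathbf{Pic}^{c}(\mathbf{A})$ are abelian by construction. I expect no substantive obstacle: the entire content of the argument is the comonadicity hypothesis, which simultaneously delivers the monomorphicity of $e$ (needed to apply the general machinery of Section \ref{SucesionesExactas}) and the isomorphism $\overline{\Gamma}_A$ (needed to identify $\mathfrak{I}^{\mathbf{I}}_A$ with $\mathbf{Aut}_{\mathbf{A}-\texttt{cor}}(\mathfrak{C}_e)$); once these are in place, the rest is bookkeeping.
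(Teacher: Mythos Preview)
Your proposal is correct and follows essentially the same approach as the paper: define $\kappa_A=\overline{\Gamma}_A\circ\overline{\mathcal{D}}_A$ and $o_A=\Omega_A\circ\overline{\Gamma}_A^{-1}$ using the isomorphism from Proposition~\ref{comonadic}, then splice \eqref{seq.2} and \eqref{seq.4}. You are in fact more careful than the paper in one respect: you explicitly observe that comonadicity of $A\ww-$ forces $e$ to be monomorphic (via precomonadicity), which is needed to invoke \eqref{seq.2} and \eqref{seq.4}, whereas the paper's proof tacitly relies on this.
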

\begin{proof}
 By Proposition \ref{comonadic}, the isomorphism $\Gamma_{A}:\mathfrak{I}^l_{\V}(A) \to
\textbf{End}_{\textbf{A}\texttt{-cor}}(\mathfrak{C}_e)$ of monoids restricts to an isomorphism
$$\ol{\Gamma}\!_{A}: \mathfrak{I}_A^\textbf{I}\to \textbf{Aut}_{\textbf{A}\texttt{-cor}}(\mathfrak{C}_e).$$
Write $\kappa_A$ for $\ol{\Gamma}\!\!_{A} \ol{\mathcal{D}}\!_A$ and write
$o_A$ for $\Omega\!_A (\ol{\Gamma}\!\!_{A})^{-1}$. Then by combining (\ref{seq.2}) with (\ref{seq.3}), one obtains
the exact sequence \eqref{seq.5}.
\end{proof}

As an immediate consequence we deduce:

\begin{proposition}\label{ker.pic} Suppose that $\textbf{A}=(A,m,e)$ is a commutative $\V$-algebra such that that the functor
$A \ww -:\V \to {_{\textbf{A}}\!\V}$ is comonadic. Then one has an isomorphism of groups
$$\emph{\textbf{Coker}}(\kappa_A) \simeq \emph{\textbf{Ker}}(\emph{\textbf{Pic}}^{c}(e)),$$
where $\emph{\textbf{Coker}}(\kappa_A)$ is the cokernel of the homomorphism $\kappa_A$.
\end{proposition}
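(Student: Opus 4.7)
The plan is to deduce this isomorphism as a purely formal consequence of the exact sequence \eqref{seq.5} just established, together with the first isomorphism theorem for abelian groups. All five terms of \eqref{seq.5} are abelian groups (by Proposition \ref{mon.ab} and Remark \ref{commutative.1}) and all four maps are group homomorphisms, so cokernels and quotients in the usual sense are available.

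First I would verify that the hypotheses of the preceding theorem are in force, so that \eqref{seq.5} actually applies. The only nontrivial point is that $e : I \to A$ must be monomorphic (this was used to guarantee that \eqref{seq.2} and \eqref{seq.3}, and hence \eqref{seq.5}, exist). But comonadic functors are precomonadic, so the unit of the adjunction $A \otimes - \dashv $ forgetful is componentwise a monomorphism; evaluated at $I \in \V$, this unit is (isomorphic to) $e : I \to A$, so $e$ is indeed monomorphic.

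Next, exactness of \eqref{seq.5} at $\mathbf{Aut}_{\mathbf{A}\texttt{-cor}}(\mathfrak{C}_e)$ gives $\mathrm{Im}(\kappa_A) = \mathrm{Ker}(o_A)$. By the first isomorphism theorem applied to the homomorphism $o_A : \mathbf{Aut}_{\mathbf{A}\texttt{-cor}}(\mathfrak{C}_e) \to \mathbf{Pic}^{c}(\mathbf{I})$, we obtain
$$\mathbf{Coker}(\kappa_A) \;=\; \mathbf{Aut}_{\mathbf{A}\texttt{-cor}}(\mathfrak{C}_e)/\mathrm{Im}(\kappa_A) \;=\; \mathbf{Aut}_{\mathbf{A}\texttt{-cor}}(\mathfrak{C}_e)/\mathrm{Ker}(o_A) \;\simeq\; \mathrm{Im}(o_A).$$
Finally, exactness of \eqref{seq.5} at $\mathbf{Pic}^{c}(\mathbf{I})$ yields $\mathrm{Im}(o_A) = \mathbf{Ker}(\mathbf{Pic}^{c}(e))$, and combining these two identifications gives the desired isomorphism of groups.

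There is essentially no obstacle here beyond bookkeeping; the content has all been invested in constructing \eqref{seq.5} and in checking that the objects and maps involved are genuinely abelian groups and group homomorphisms, both of which are already available. The result is therefore a direct corollary rather than a theorem requiring new input.
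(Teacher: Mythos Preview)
Your argument is correct and matches the paper's approach: the paper states this proposition as an ``immediate consequence'' of the exact sequence \eqref{seq.5}, and what you have written is precisely the standard two-step extraction (first isomorphism theorem plus exactness at the two middle terms) that makes that immediacy explicit. Your added remark that comonadicity of $A\otimes -$ forces $e$ to be monomorphic is a useful piece of bookkeeping that the paper leaves implicit.
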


We will need the following descrpition of $\kappa_A$:

\begin{proposition}\label{lambda} In the circumstances above, $\kappa_A(\lambda)=(A \ww \lambda^{-1})\cdot
(\lambda \ww A)\,\,\, \text{for every } \lambda \in \emph{\textbf{Aut}}_{\V_\mathbf{A}}(A).$
\end{proposition}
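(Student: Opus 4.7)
The plan is to unpack the definition of $\kappa_A=\ol{\Gamma}\!_A\,\ol{\mathcal{D}}\!_A$ step by step, exploiting the fact that $\lambda$ is already an isomorphism. First, apply $\ol{\mathcal{D}}\!_A$ to $\lambda$: after identifying $f^*\w f = A^e\ww_{\mathbf{A}}A_e$ with $A$ via the multiplication $m_A$ (so that $\lambda\w f$ becomes $\lambda$ itself and $\eta_f$ becomes $e$), the defining pullback \eqref{ex} reads
\[
\xymatrix{f_\lambda \ar[d]_{p_\lambda} \ar[r]^{i_\lambda}& A \ar[d]^{\lambda}\\ I \ar[r]_{e}& A.}
\]
Since $\lambda$ is invertible, Lemma \ref{expull} forces $p_\lambda$ to be an isomorphism, so we may choose the representative $f_\lambda=I$, $p_\lambda = 1_I$, and then $i_\lambda = \lambda^{-1}\cdot e: I\to A$.

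Next, I would compute the two 2-cells $\xi_{i_\lambda}$ and $\xi^*_{i_\lambda}$ that enter the definition of $\Gamma_e([f_\lambda,i_\lambda])$, namely the composite $(A\ww \xi^*_{i_\lambda})\cdot \alpha \cdot (\xi_{i_\lambda}^{-1}\ww A)$. From the explicit form of $\xi_{i_\lambda}$ and $\xi^*_{i_\lambda}$ as $m_A\cdot (A\ww i_\lambda)$ and $m_A\cdot (i_\lambda\ww A)$, and using that $\lambda^{-1}$ is right $\mathbf{A}$-linear (hence, by commutativity of $\mathbf{A}$, also left $\mathbf{A}$-linear), I get
\[
\xi_{i_\lambda} \;=\; m_A\cdot(A\ww \lambda^{-1})\cdot(A\ww e) \;=\; \lambda^{-1}\cdot m_A\cdot(A\ww e) \;=\; \lambda^{-1}\cdot r_A,
\]
and symmetrically $\xi^*_{i_\lambda} = \lambda^{-1}\cdot l_A$, using the unit axioms $m_A\cdot(A\ww e)=r_A$ and $m_A\cdot(e\ww A)=l_A$.

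Substituting these into the formula for $\kappa_A(\lambda)=\Gamma_e([f_\lambda,i_\lambda])$ yields
\[
\kappa_A(\lambda) \;=\; (A\ww \lambda^{-1})\cdot\bigl[(A\ww l_A)\cdot \alpha_{A,I,A}\cdot (r_A^{-1}\ww A)\bigr]\cdot (\lambda \ww A),
\]
and the bracketed middle term collapses to the identity on $A\ww A$ by the monoidal triangle coherence axiom $(A\ww l_A)\cdot\alpha_{A,I,A}=r_A\ww A$. This gives the claimed formula. The only delicate point is bookkeeping: making sure the identifications $f^*\w f\simeq A$ and $f\w f^*=A\ww A$ are handled consistently so that $\lambda\w f$ really corresponds to $\lambda$, and that the $A$-bilinearity of $\lambda$ combined with the commutativity of $\mathbf{A}$ is used where needed; the rest is standard monoidal coherence.
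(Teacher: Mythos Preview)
Your proof is correct and follows essentially the same approach as the paper: compute the pullback defining $\ol{\mathcal{D}}_A(\lambda)$, read off $\xi_{i_\lambda}$ and $\xi^*_{i_\lambda}$ (which in this setting coincide with $\xi^l_{i_\lambda}$ and $\xi^r_{i_\lambda}$), and substitute into the formula for $\ol{\Gamma}_A$. The only cosmetic difference is that the paper keeps the pullback object $A_\lambda$ abstract and carries the isomorphism $p_\lambda$ through the computation (using the identity \eqref{ex2} to get $\xi^r_{i_\lambda}=\lambda^{-1}\cdot(p_\lambda\ww A)$, then the symmetry to get $\xi^l_{i_\lambda}=\lambda^{-1}\cdot(A\ww p_\lambda)$), whereas you fix the representative $f_\lambda=I$, $p_\lambda=1_I$, $i_\lambda=\lambda^{-1}\cdot e$ at the outset and then invoke the triangle coherence to cancel the unit constraints; both routes arrive at the same formula by the same mechanism.
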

\begin{proof} Recall first that for any $[(J, i_{J})]\in \mathfrak{I}_A^\textbf{I}$, $\ol{\Gamma}\!_{A}([(J, i_{J})])$
is the composite
$$ A\ww A \xr{(\xi^l_{i\!_{J}})^{-1}\ww A} A\ww J \ww A \xr{A\ww \xi^r_{i\!_{J}}}A\ww A.$$

Now, take any $\lambda \in \textbf{Aut}_{\V_\textbf{A}}(A)$ and form the pullback
\[\xymatrix{ A_\lambda \ar[d]_{p_\lambda} \ar[r]^-{i_\lambda}& A \ar[d]^{\lambda}\\
I \ar[r]_-{e}& A\,.}\] Then  $\ol{\mathcal{D}}\!_A(\lambda)=[(A_\lambda, i_\lambda)]$.
Moreover, $m \cdot (i_\lambda \ww A)=\lambda^{-1} \cdot (p_\lambda \ww A)$ by (\ref{ex2}). Thus
\begin{equation}\label{left}
\xi^r_{i_\lambda}=\lambda^{-1} \cdot (p_\lambda \ww A).
\end{equation}
Then, since $\xi^l_{i_\lambda}=\xi^r_{i_\lambda}\cdot \tau_{A,\,A_\lambda}$, it follows that
\begin{equation}\label{right}
\xi^l_{i_\lambda}=\lambda^{-1} \cdot(A \ww p_\lambda).
\end{equation}

Considering now the diagram
\[\xymatrix @C=0.6in @R=0.5in{A \ww A \ar@/^3pc/ [rr]^{(\xi^l_{i\!_{\lambda}})^{-1}\ww A} \ar[r]^-{A\w p^{-1}_\lambda
\ww A}\ar@{=}[rd]& A \ww J \ww A \ar[r]^-{\lambda \ww J \ww A}
\ar[d]|{A\ww p_\lambda \ww A}& A \ww J \ww A\ar[d]|{A\ww p_\lambda \ww A} \ar@/^3pc/ [dd]^{A\ww \xi^r_{i\!_{\lambda}}}\\
& A \ww A \ar[r]_{\lambda \ww A} & A \ww A \ar[d]|{A \ww \lambda^{-1}}\\
&& A \ww A}\] in which the rectangle commutes by naturality of composition, and using (\ref{left}) and (\ref{right}),
one concludes that $\kappa_A(\lambda)=(A \ww \lambda^{-1})\cdot (\lambda \ww A).$
\end{proof}

Our next objective is to prove that the group $\textbf{Ker}(\textbf{Pic}^{c}(e))$ is isomorphic to a suitable Amitsur cohomology group $\mathcal{H}^1(e\,, \ul{\textbf{Aut}}\,_I^{\mathfrak {Alg}})$. In order to describe this cohomology group, and to prove the existence of the aforementioned isomorphism, we need to use some classical results which, for the convenience of the reader, are recalled in the Appendix. Let us first describe the functor $\ul{\mathbf{Aut}}_I^{\mathfrak{Alg}}$, which is a particular case of the given at the beginning of the Appendix.

Let
$\mathcal{E}$ be the opposite of the category of commutative $\V$-algebras, $\verb"CAlg"(\V)$.
It is well-known (e.g., \cite[Corollary C.1.1.9]{Jon}) that, under our assumptions on $\V$,  $\mathcal{E}$ has pullbacks and they are constructed
as tensor products. It is routine to check that the assignments
$$\textbf{A}\longmapsto {_{\mathbf{A}}\!\V} \quad \text{and}\quad  \textbf{A} \xr{\iota} \textbf{B}\longmapsto {_{\mathbf{A}}\!\V} \xr{\iota^*} {_{\mathbf{B}}\!\V},$$ where $\iota^*:{_{\mathbf A}\!\V} \to {_{\mathbf B}\!\V}$ is the \emph{change--of--base} functor
induced by $\iota$ (see Example \ref{AB}), give rise to an $\mathcal{E}$-indexed category (see the Appendix)
$$\mathfrak {Alg}: \mathcal{E}^{\text{op}} \to \mathbf{CAT}.$$

Since for any morphism $\iota: \textbf{B} \to \textbf{A}$ in $\mathcal{E}$ (i.e., a morphism
$\iota: \textbf{A} \to \textbf{B}$ of commutative $\V$-algebras), the functor
$\iota^*:{_{\mathbf A}\!\V} \to {_{\mathbf B}\!\V}$ admits as a right adjoint the
forgetful functor $\iota_*: {_{\mathbf B}\!\V} \to {_{\mathbf A}\!\V}$ (see Example \ref{AB}),
the $\mathcal{E}$-indexed category $\mathfrak {Alg}$ has products (in the sense of Definition \ref{havingproducts}) if and only if,
for any morphism $\kappa:\textbf{C} \to \textbf{A}$ in  $\mathcal{E}$, there is
an isomorphism $q_*p^* \to \iota^*\kappa_*$ of functors, where $ p=\iota \ww_\textbf{A} C  : C \to B \ww_\textbf{A} C$
and $ q=B \ww_\textbf{A} \kappa  : B \to B \ww_\textbf{A} C$. It is easy to see that this condition
is equivalent to saying that for any $V \in {_{\mathbf{C}}\!\V}$,
one has an isomorphism $$(B \ww_A C)\ww_{\textbf{C}}V \simeq B  \ww_\textbf{A}\!V,$$ and this is certainly the case,
since the tensor product in $\V$ preserves reflexive coequalizers
by our assumption on $\V$. Thus, $\mathfrak {Alg}$ admits products.

\begin{lemma}\label{autalg} The functor $\ul{\mathbf{Aut}}_A^{\VV} : (\mathcal{E}\!\downarrow \!\mathbf A)^{\text{op}}=\mathbf A \!\downarrow \!\verb"CAlg"(\V) \to \mathbf{Group}$ is described on objects as
\begin{equation}\label{a.1}
\ul{\mathbf{Aut}}_{\,A}^{\VV}(\mathbf{A}\xr{\iota} \mathbf{B})=\mathbf{Aut}_{{_{\mathbf B} \!\V}}(B)
\end{equation}
and on morphisms as
\begin{equation}\label{a.2}
\ul{\mathbf{Aut}}_{\,A}^{\VV}(f:(\mathbf{B}',\iota') \to (\mathbf{B},\iota))(\sigma)=m_{B'}\cdot(B' \ww f) \cdot(B' \ww \sigma)\cdot(B' \ww \,e_B)
\end{equation}
for all $\sigma \in \mathbf{Aut}_{{_{\mathbf B} \!\V}}(B)$.
\end{lemma}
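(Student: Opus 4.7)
The plan is to unwind the general definition of $\ul{\mathbf{Aut}}_A^{\mathfrak{F}}$ recalled at the beginning of the Appendix and specialise it to $\mathfrak{F}=\VV$. For any $\mathcal{E}$-indexed category with products and any chosen object $A\in \mathfrak{F}(\mathbf{A})$, that definition assigns to an object $(\mathbf{B},\iota)$ of the slice the group $\mathbf{Aut}_{\mathfrak{F}(\mathbf{B})}(\iota^*A)$, and to each slice morphism the image of the reindexing functor modulo canonical identifications. In our situation $\mathfrak{F}(\mathbf{B})={_\mathbf{B}\!\V}$, and $\iota^*=B\ww_\mathbf{A}-$ is the change-of-base functor from Example \ref{AB}; so the task reduces to making both values explicit.

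First I would establish \eqref{a.1}. I would exhibit the canonical $\mathbf{B}$-linear isomorphism $\iota^*(A)=B\ww_\mathbf{A} A\cong B$ induced by the factorisation, through the coequalizer $q_{B,A}$, of $m_B\cdot(B\ww \iota):B\ww A\to B$; this map does coequalise the defining parallel pair of $B\ww_\mathbf{A} A$ thanks to associativity of $m_B$ together with $\iota$ being a $\V$-algebra morphism, and its inverse is induced by $B\ww e_A:B\cong B\ww I\to B\ww A$ followed by $q_{B,A}$. Passing to the automorphism groups yields \eqref{a.1}.

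Next I would verify \eqref{a.2}. A morphism $f:(\mathbf{B}',\iota')\to(\mathbf{B},\iota)$ in $(\mathcal{E}\!\downarrow\!\mathbf A)^{\text{op}}$ is precisely an arrow $f:\mathbf{B}\to\mathbf{B}'$ of $\verb"CAlg"(\V)$ with $f\iota=\iota'$, and under the functor it corresponds to the reindexing $f^*=B'\ww_\mathbf{B}-:{_\mathbf{B}\!\V}\to {_{\mathbf{B}'}\!\V}$. Starting with $\sigma\in\mathbf{Aut}_{{_\mathbf{B}\!\V}}(B)$, the image $f^*(\sigma)=B'\ww_\mathbf{B}\sigma$ must be transported along the isomorphism $B'\ww_\mathbf{B} B\cong B'$ from the previous step (applied to $f$ in place of $\iota$). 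Pre-composing with the inverse isomorphism, applying $f^*(\sigma)$, and post-composing with the forward isomorphism gives, by naturality $q_{B',B}\cdot(B'\ww \sigma)=(B'\ww_\mathbf{B}\sigma)\cdot q_{B',B}$, the composite
\[
B'\xrightarrow{B'\ww e_B}B'\ww B\xrightarrow{B'\ww \sigma}B'\ww B\xrightarrow{B'\ww f}B'\ww B'\xrightarrow{m_{B'}}B',
\]
which is exactly \eqref{a.2}.

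There is no deep step in this proof; the only real obstacle is the careful bookkeeping of the coequalizer quotients $q_{B,A}$, $q_{B',B}$ required in the last diagram chase. Once the identifications $\iota^*A\cong B$ and $f^*B\cong B'$ are in hand, functoriality of $-\ww_\mathbf{B}-$ and of $f^*$ delivers both formulas at once.
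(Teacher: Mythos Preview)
Your proposal is correct and follows essentially the same approach as the paper: both establish the canonical isomorphism $B\ww_{\mathbf A}A\cong B$ via the coequalizer (the paper displays it explicitly as a split coequalizer with section $B\ww e_A$), and both transport $f^*(\sigma)=B'\ww_{\mathbf B}\sigma$ along the analogous isomorphism $B'\ww_{\mathbf B}B\cong B'$ to obtain the composite $m_{B'}\cdot(B'\ww f)\cdot(B'\ww\sigma)\cdot(B'\ww e_B)$. The only cosmetic difference is that the paper first works out the formula for a general left $\mathbf A$-module $M_0$ and then specialises to $M_0=A$, whereas you go directly to the case $M_0=A$; this shortcut loses nothing, since the general $M_0$ formula is not used elsewhere.
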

\begin{proof}
Fix a commutative $\V$-algebra $\textbf{A}$, and a left $\mathbf A$-module $M_0$. We have the functor
$$\ul{\textbf{Aut}}^{\VV}_{\,M_0}:(\mathcal{E}\!\downarrow \!\mathbf A)^{\text{op}}=\mathbf A \!\downarrow \!\verb"CAlg"(\V) \to \textbf{Group}$$
sending  each object $\iota: \textbf{B}\to \textbf{A}$ of $\mathcal{E}\!\downarrow \!\mathbf A$ (i.e. a morphism
$\iota: \textbf{A}\to \textbf{B}$ of commutative $\V$-algebras) to the group
$$\ul{\textbf{Aut}}_{\,M_0}^{\VV}(\iota)=\textbf{Aut}_{{_{\mathbf B} \!\V}}(\iota^*(M_0)).$$
Note that since $\iota^*(M_0)=(B \ww_\textbf{A}M_0, m\ww_\textbf{A}M_0), $ $\ul{\textbf{Aut}}_{M_0}^{\mathfrak {Alg}}(\iota)=
\textbf{Aut}_{{_{\mathbf B} \!\V}}(B \ww_\textbf{A}M_0)$, where $B \ww_\textbf{A}M_0$ is a left $\textbf{B}$-module
via $m_B \ww_\textbf{A}M_0:B \ww B \ww_\textbf{A} M_0 \to B \ww_\textbf{A}M_0$.

Let us describe explicitly the action of $\ul{\textbf{Aut}}_{\,M_0}^{\VV}$ on morphisms. If  $f:(\textbf{B}',\iota') \to (\textbf{B},\iota)$ is a morphism  in
$(\mathcal{E}\!\downarrow \!\mathbf A)^{\text{op}}$
making the triangle \[\xymatrix{& \textbf{A} \ar[ld]_\iota \ar[rd]^{\iota'}&\\ \textbf{B} \ar[rr]_f&& \textbf{B}'  }\]  commute,
then $$\ul{\textbf{Aut}}_{M_0}^{\mathfrak {Alg}}(f):\ul{\textbf{Aut}}_{M_0}^{\mathfrak {Alg}}(\iota)\to \ul{\textbf{Aut}}_{M_0}^{\mathfrak {Alg}}(\iota')$$ takes
$\sigma \in \ul{\textbf{Aut}}_{M_0}^{\VV}(\iota)$ to the composite
$$B' \ww_\textbf{A}M_0\simeq B'\ww_\textbf{B}(B\ww_\textbf{A}M_0) \xr{B'\ww_\textbf{B} \sigma}
B'\ww_\textbf{B}(B\ww_\textbf{A}M_0)\simeq B' \ww_\textbf{A}M_0.$$

Since the following split coequalizer diagram
\[\xymatrix{ B' \ww  B \ww  B \ww_\textbf{A}M_0 \ar@{->}@<0.5ex>[rr]^-{B'\ww m_B \ww_\textbf{A}M_0} \ar@
{->}@<-0.5ex> [rr]_-{a}&& B'  \ww  B \ww_\textbf{A}M_0 \ar@/_3pc/
[ll]_{B' \ww B \ww e\!_B \ww_\textbf{A}M_0}
\ar[rr]_{B' \ww f \ww_\textbf{A}M_0}&& B'\ww B'\ww_\textbf{A}M_0 \ar[rr]_{m_{B'}\ww_\textbf{A}M_0} &&B' \ww_\textbf{A}M_0\ar@/_3pc/ [llll]_{B' \ww e\!_B\ww_\textbf{A}M_0},}\] where $a$ is the composite $(m_{B'} \ww B \ww_\textbf{A}M_0)\cdot (B' \ww f \ww B \ww_\textbf{A}M_0)$,
is the defining coequalizer for  $B'\ww_\textbf{B} (B\ww_\textbf{A}M_0)$, it follows that
$\ul{\textbf{Aut}}_{M_0}^{\mathfrak {Alg}}(f)(\sigma)$ is the unique (iso)morphism $B'\ww_\textbf{A}M_0  \to B'\ww_\textbf{A}M_0 $ making the diagram
\[\xymatrix @C=0.4in @R=0.3in{ B' \ww B  \ww_\textbf{A}M_0\ar[d]_{B' \ww f  \ww_\textbf{A}M_0}\ar[rr]^{B'\ww \sigma} &&
B' \ww B  \ww_\textbf{A}M_0\ar[d]^{B' \ww f  \ww_\textbf{A}M_0}\\
B' \ww B'  \ww_\textbf{A}M_0 \ar[d]_{m_{B'}\ww_\textbf{A}M_0}&&  B' \ww B'  \ww_\textbf{A}M_0 \ar[d]^{m_{B'}\ww_\textbf{A}M_0}\\
B'  \ww_\textbf{A}M_0 \ar[rr]_{\ul{\textbf{Aut}}_{M_0}^{\mathfrak {Alg}}(f)(\sigma)} && B'  \ww_\textbf{A}M_0}\]
commute. But since $(m_{B'}\ww_\textbf{A}M_0)\cdot (B' \ww f \ww_\textbf{A}M_0)\cdot(B' \ww e_B \ww_\textbf{A}M_0)=1$, it follows that
$$\ul{\textbf{Aut}}_{M_0}^{\VV}(f)(\sigma)=(m_{B'}\ww_\textbf{A}M_0)\cdot(B' \ww f\ww_\textbf{A}M_0) \cdot(B' \ww \sigma)\cdot(B' \ww \,e_B\ww_\textbf{A}M_0).$$

For us of  interest is the case where $M_0=A$ with the left regular action of $\textbf{A}$ on $A$.
Since $\ul{\textbf{Aut}}_A^{\VV}(\textbf{A}\xr{\iota} \textbf{B})=\textbf{Aut}_{{_{\mathbf B} \!\V}}(B \ww_\textbf{A}A)$
and since the defining coequalizer diagram for $B \ww_\textbf{A}A$ is the following split one
\[\xymatrix{ B \ww  A \ww  A \ar@{->}@<0.5ex>[rr]^-{B\ww m_A} \ar@
{->}@<-0.5ex> [rr]_-{(m_B \ww A)\cdot (B \ww \iota \ww A)}&& B  \ww  A \ar@/_2pc/ [ll]_{B \ww B \ww e_A}
\ar[r]_{B \ww \iota}& B\ww B \ar[r]_{m_B} &B \ar@/_2pc/ [ll]_{B \ww e_A},}\] it follows that
the group $\ul{\textbf{Aut}}_{\,A}^{\VV}(\iota)$ is canonically isomorphic to $\textbf{Aut}_{{_{\mathbf B} \!\V}}(B)$.
\end{proof}

Note that since for each morphism $\iota:\textbf{A} \to \textbf{B}$ of commutative $\V$-algebras, $\ul{\textbf{Aut}}_A^{\mathfrak {Alg}}(\iota)$
is an abelian group by Remark \ref{commutative.1}, it follows that the functor $\ul{\textbf{Aut}}_A^{\mathfrak {Alg}}$ takes values in the category
of abelian groups.

Now consider the augmented simplicial object in $\mathcal{E} = \verb"CAlg"(\V)^{op}$
\begin{equation}\label{Amitsure} \xymatrix{(A/I)_*:\,\,I \ar[r]^-{e} & A \ar@<-1mm>[r]_-{i_1}  \ar@<1mm>[r]^-{i_0}
& A\ww A\ar@/_2pc/ [l]_{s_0}  \ar[rr]|-{i_1}\ar@<2mm>[rr]^-{i_0} \ar@<-2mm>[rr]_-{i_2}&&
A\ww A \ww A \ar@/_3pc/ [ll]_{s_0, \, s_1}\ar@{}[r]|(.65){.\,.\,.}&},
\end{equation}
associated to the morphism $e : I \to A$, which is a particular case of \eqref{Amitsurg} in the Appendix. By applying the functor $\ul{\mathbf{Aut}}_I^{\mathfrak {Alg}}$ to $(A/I)_*$, and computing cohomology, we get the first Amitsur cohomology group $\mathcal{H}^1(e\,, \ul{\mathbf{Aut}}_I^{\mathfrak {Alg}})$. The reader is referred to the Appendix for details.

\begin{theorem} \label{Amitsur.alg}Suppose that $\textbf{A}=(A,m,e)$ is a $\V$-algebra such that that the functor
$$A \ww \,-:\V \to {_{\emph{\textbf{A}}}\!\V}$$ is comonadic. Then there is a natural isomorphism
$$\mathcal{H}^1(e\,, \ul{\emph{\textbf{Aut}}}_I^{\mathfrak {Alg}})\simeq \emph{\textbf{Ker}}(\emph{\textbf{Pic}}^{c}(e)).$$
\end{theorem}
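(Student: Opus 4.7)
The plan is to reduce, via Proposition \ref{ker.pic}, to exhibiting a natural isomorphism $\textbf{Coker}(\kappa_A) \simeq \mathcal{H}^1(e, \ul{\textbf{Aut}}_I^{\mathfrak{Alg}})$. I will achieve this by identifying $\mathrm{Im}(\kappa_A)$ with the group $B^1$ of Amitsur $1$-coboundaries and $\textbf{Aut}_{\mathbf{A}\texttt{-cor}}(\mathfrak{C}_e)$ with the group $Z^1$ of (normalized) Amitsur $1$-cocycles associated to the cosimplicial abelian group obtained by applying $\ul{\textbf{Aut}}_I^{\mathfrak{Alg}}$ to \eqref{Amitsure}.

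For the coboundaries, I would first use Lemma \ref{autalg} to compute the low-degree cochains as $C^0 = \ul{\textbf{Aut}}_I^{\mathfrak{Alg}}(A) \cong \textbf{Aut}_{{}_\mathbf{A}\V}(A) \cong \textbf{Aut}_{\V_\mathbf{A}}(A)$ (using commutativity of $\mathbf{A}$) and $C^1 = \ul{\textbf{Aut}}_I^{\mathfrak{Alg}}(A \ww A) \cong \textbf{Aut}_{{}_\mathbf{A}\V_\mathbf{A}}(A \ww A)$. A direct computation with formula \eqref{a.2} applied to the two cofaces $i_0, i_1$ in \eqref{Amitsure} shows that the induced maps $C^0 \to C^1$ are $\lambda \mapsto A \ww \lambda$ and $\lambda \mapsto \lambda \ww A$. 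These commute (they act on distinct tensor factors), so combining with Proposition \ref{lambda} one reads $\kappa_A(\lambda) = (A \ww \lambda^{-1})(\lambda \ww A)$ as precisely the Amitsur differential of $\lambda$; hence $\mathrm{Im}(\kappa_A) = B^1$.

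For the cocycles, I would invoke the B\'enabou-Roubaud-Beck theorem (Theorem \ref{BRB}) to identify descent data along $A \ww - : \V \to {}_\mathbf{A}\V$ with the Eilenberg-Moore category of $\mathfrak{C}_e$-comodules in $\V$, together with Grothendieck's theorem (Proposition \ref{Grothendieck}), which identifies $\mathcal{H}^1(e, \ul{\textbf{Aut}}_I^{\mathfrak{Alg}})$ with the pointed set of descent-data classes on the canonical object. Unwinding these identifications in our setting, normalized $1$-cocycles biject with $\mathbf{A}$-coring endomorphisms of $\mathfrak{C}_e$; under the comonadicity hypothesis Proposition \ref{end=aut} promotes endomorphisms to automorphisms, so that $Z^1 \simeq \textbf{Aut}_{\mathbf{A}\texttt{-cor}}(\mathfrak{C}_e)$. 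Combining with the previous paragraph gives the desired $\textbf{Coker}(\kappa_A) \simeq Z^1/B^1 = \mathcal{H}^1(e, \ul{\textbf{Aut}}_I^{\mathfrak{Alg}})$, with naturality inherited from the descent-theoretic correspondence.

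The hardest step is the third: transferring the classical identification between Amitsur cocycles and coring automorphisms to the abstract monoidal setting and ensuring that the normalization issue does not obstruct the group isomorphism. A more concrete fallback is to verify the match directly: writing $\phi(1 \ww 1) = u \in A \ww A$ for a coring endomorphism $\phi$ (in Sweedler-style notation), comultiplicativity of $\phi$ unpacks to the Amitsur cocycle identity $u_{13} = u_{12} u_{23}$ in $A^{\ww 3}$, while counitality $m \cdot \phi = m$ becomes the normalization $m(u) = 1$; since every cohomology class contains a normalized representative, this already suffices. Either route ultimately yields the required isomorphism, but the appeal to Theorem \ref{BRB} and Proposition \ref{Grothendieck} has the advantage of making naturality transparent.
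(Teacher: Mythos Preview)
Your proposal is correct and follows essentially the same route as the paper: reduce via Proposition \ref{ker.pic}, identify $\mathcal{Z}^1(e,\ul{\textbf{Aut}}_I^{\mathfrak{Alg}})$ with $\textbf{Aut}_{\textbf{A}\texttt{-cor}}(\mathfrak{C}_e)$ by chaining Proposition \ref{Grothendieck}, Theorem \ref{BRB}, the identification of $\mathcal{G}_e$-coalgebra structures on $(A,m_A)$ with $\textbf{End}_{\textbf{A}\texttt{-cor}}(\mathfrak{C}_e)$, and Proposition \ref{end=aut}, and then verify that the Amitsur differential $\Delta_1$ coincides with $\kappa_A$ via the explicit computation of the two cofaces (yielding $\lambda\ww A$ and $A\ww\lambda$) together with Proposition \ref{lambda}. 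The paper performs exactly these steps (in the opposite order, treating cocycles before coboundaries) and does not need your Sweedler-style fallback.
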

\begin{proof} According to Proposition \ref{ker.pic}, it is enough to show that there is a natural isomorphism
$\textbf{Coker}(\kappa_A) \simeq \mathcal{H}^1(e\,, \ul{\textbf{Aut}}\,_I^{\mathfrak {Alg}}).$

Write $\mathcal{G}_e$ for the comonad on ${_\textbf{A}}\!\!\V$  generated by the adjunction
$$\xymatrix{\V   \rrtwocell^{e^*=A \ww -}_{e_*=U}{'\top} &&{_\textbf{A}}\!\!\V}$$ and write
$\mathcal{G}_e\textbf{-Coalg}(A,m_A)$ the set of all $\mathcal{G}_e$-coalgebra structures on $(A,m_A) \in {_\textbf{A}}\!\!\V$. We know from \cite[Proposition 4.5]{GM} that $\mathcal{G}_e\textbf{-Coalg}(A,m_A)=\textbf{End}_{\textbf{A}\texttt{-cor}}(\mathfrak{C}_e)$ and that $\textbf{End}_{\textbf{A}\texttt{-cor}}(\mathfrak{C}_e)=\textbf{Aut}_{\textbf{A}\texttt{-cor}}(\mathfrak{C}_e)$ by Corollary \ref{end=aut}. On the other hand, $\mathrm{Des}_\mathfrak {Alg}(A,m_A)=\mathcal{Z}^1(\iota\,, \ul{\textbf{Aut}}\,_I^{\mathfrak {Alg}})$ by Proposition \ref{Grothendieck}, and $\mathrm{Des}_\mathfrak {Alg}(A,m_A)=\mathcal{G}_e\textbf{-Coalg}(A,m_A)$ by Theorem \ref{BRB}.  It follows that $\mathcal{Z}^1(e\,, \ul{\textbf{Aut}}\,_I^{\mathfrak {Alg}})=\textbf{Aut}_{\textbf{A}\texttt{-cor}}(\mathfrak{C}_e)$.

Applying the functor $\ul{\textbf{Aut}}\,_I^{\mathfrak {Alg}}$ to \eqref{Amitsure}, and using the fact that
for any commutative $\V$-algebra $\textbf{S}$, $\ul{\textbf{Aut}}\,_I^{\mathfrak {Alg}}(S)=
\textbf{Aut}_{{_{\mathbf S} \!\V}}(S)$ is an abelian group by Remark \ref{commutative.1},
we get the following  simplicial abelian group
\[\xymatrix{(A/I, \ul{\textbf{Aut}}\,_I^{\mathfrak {Alg}})_*:\textbf{Aut}_{\V}(I) \ar[rr]^-{\ul{\textbf{Aut}}\,_I^{\mathfrak {Alg}}(e)} &&
\textbf{Aut}_{{_{\mathbf A}\!\!\V}}(A)
\ar@<-2mm>[rr]_-{\ul{\textbf{Aut}}\,_I^{\mathfrak {Alg}}(i_1)}  \ar@<2mm>[rr]^-{\ul{\textbf{Aut}}\,_I^{\VV}(i_0)} &&
\textbf{Aut}_{{_{\textbf{A}\ww \textbf{A}} \!\V}}(A\ww A)\ar@/_3pc/ [ll]_{\ul{\textbf{Aut}}\,_I^{\VV}(s_0)}  {.\,.\,.}&}\]
and the corresponding complex $C(A/I, \ul{\au}\,^{\al}_I)$ of abelian groups

$$0 \xr{} \textbf{Aut}_{\V}(I) \xr{\ul{\au}\,_I^{\VV}(e)}
 \au_{{_{\mathbf A}\!\!\V}}(A)\xr{\Delta_1}
\au_{{_{\textbf{A}\ww \textbf{A}} \!\V}}(A\ww A) \xr{\Delta_2} \cdots $$
where $$\Delta_n=\prod_{i=0}^{n} {\ul{\textbf{Aut}}}\,_I^{\VV}(i_n)^{(-1)^n}, \, n\geq 1.$$

Since $i_0=A \ww \,e$, $i_1=e \ww A$ and since the multiplication in the tensor product $\V$-algebra
$\textbf{A}\ww \textbf{A}$ is given by the composite $(m_A\ww m_A)\cdot (A \ww \tau_{A,A} \ww A)$,
it follows from (\ref{a.2}) that
$${\ul{\textbf{Aut}}}\,_I^{\VV}(i_0)(\lambda)= (m_A\ww m_A)\cdot (A \ww \tau_{A,\,A} \ww A) \cdot (A\ww A \ww A \ww \,e)
\cdot (A\ww A \ww \la)\cdot (A \ww A \ww \,e)$$
and
$$\ul{\textbf{Aut}}\,_I^{\VV}(i_1)(\lambda)=(m_A\ww m_A)\cdot (A \ww \tau_{A,\,A} \ww A) \cdot (A\ww A \ww \,e \ww A)
\cdot (A\ww A \ww \la)\cdot (A \ww A \ww \,e)$$ for all $\lambda \in \au_{{_{\textbf{A}} \!\V}}(A,m_A)$.

But since in the diagram
\[\xymatrix @C=0.6in @R=0.5in{A\ww A \ar@/_2pc/ [rdd]_{1_{A\ww A}}\ar[rd]_-{A\ww e \ww A}
\ar[r]^-{A\ww A \ww e}& A\ww A \ww A \ar[d]^{A\ww \tau_{A,\,A} }\ar[r]^-{A\ww A \ww \la} & A\ww A \ww A \ar[d]^{A\ww \tau_{A,\,A} }\ar[r]^-{A\ww A \ww A \ww e}&
A\ww A \ww A\ww A\ar[d]^{A\ww \tau_{A,\,A} \ww A}\\
&A\ww A \ww A \ar[d]_{m_A\ww A}\ar[r]_{A\ww \la \ww A} &A\ww A \ww A \ar@{=}[rd]
\ar[r]_-{A\ww A \ww A \ww e} & A\ww A \ww A\ww A \ar[d]^{A\ww A \ww m_A}\\&A\ww A \ar[r]_-{\la\ww A}
&A\ww A& A\ww A \ww A \ar[l]^-{m_A \ww A}}\]
\begin{itemize}
  \item the top left triangle commutes since $\tau$ is symmetry;
  \item the middle rectangle commutes by naturality of $\tau$;
  \item the right rectangle commutes by naturality of composition;
  \item the curved triangle and the bottom right triangle commutes since $e$ is the unit of $\textbf{A}$, and
  \item the trapezoid commutes since $\la$ is an automorphism of the left $\textbf{A}$-module $(A,m)$,
\end{itemize}
while in the diagram
\[\xymatrix @C=0.6in @R=0.5in{A\ww A \ar@{=}[rdd]\ar[r]^-{A\ww A \ww e}& A\ww A
\ww A\ar[dd]^{A \ww m_A} \ar[r]^-{A\ww A \ww \la} & A\ww A \ww A \ar@{=}[d]\ar[rd]_{A\ww e \ww A \ww A }\ar[r]^-{A\ww A \ww e \ww A}&
A\ww A \ww A\ww A\ar[d]^{A\ww \tau_{A,\,A} \ww A}\\
& &A \ww A\ww A\ar[d]_{A \ww m_A}& A\ww A \ww A\ww A \ar[l]^{m_A\ww A \ww A }\\
&A\ww A \ar[r]_-{A \ww \la}&A\ww A& }\]
\begin{itemize}
  \item the top right triangle commutes since $\tau$ is symmetry;
  \item the left and the bottom right triangles commute since $e$ is the unit of $\textbf{A}$, and
  \item the rectangle commutes since $\la$ is an automorphism of the left $\textbf{A}$-module $(A,m)$,
  \end{itemize} it follows that
$${\ul{\textbf{Aut}}}\,_I^{\VV}(i_0)=\la\ww A\,\,\, \text{and}\,\,\,
{\ul{\textbf{Aut}}}\,_I^{\VV}(i_1)=A \ww \la$$ and since $\Delta_1={\ul{\textbf{Aut}}}\,_I^{\VV}(i_0)\cdot ({\ul{\textbf{Aut}}}\,_I^{\VV}(i_1))^{-1}=({\ul{\textbf{Aut}}}\,_I^{\VV}(i_1))^{-1} \cdot
{\ul{\textbf{Aut}}}\,_I^{\VV}(i_0)$, it follows that $\Delta_1=(A\ww \la^{-1})\cdot (\la \ww A)$.
Hence one has commutativity in
\[\xymatrix{\textbf{Aut}_{\V_\textbf{A}}(A)\ar@{=}[d]\ar[r]^-{\kappa\!_A} &\textbf{Aut}_{\textbf{A}-\texttt{cor}}(\mathcal{C}_e) \ar@{=}[d]&\\
\textbf{Aut}_{{_{\mathbf A}\!\!\V}}(A)\ar[r]_-{\Delta_1} &\mathcal{Z}^1(\iota\,, \ul{\textbf{Aut}}^{\VV}\!(I))&**[l]\,\,\,\,\,\,\,\,\,\subseteq\textbf{Aut}_{{_{\textbf{A}\ww \textbf{A}} \!\V}}(A\ww A),}\]
implying that $\textbf{Coker}(\kappa_A) \simeq \mathcal{H}^1(e\,, \ul{\textbf{Aut}}\,_I^{\VV}).$
This completes the proof of the theorem.

\end{proof}

It is well-known (e.g., \cite{Jon}) that for any commutative $\V$-algebra $\mathbf A$, one has
$$\mathcal{E}\!\downarrow \!\mathbf A=(\mathbf A \!\downarrow \!\verb"CAlg"(\V))^{\text{op}}.$$
Moreover, the co-slice category $\mathbf A \!\downarrow \!\verb"CAlg"(\V)$ is isomorphic to the category
$\verb"CAlg"({_{\mathbf A} \!\!\V})$. In other words, to give a commutative monoid $\mathbf B$ in the symmetric
monoidal category $ {_{\mathbf A}\!\V}$ is to give a morphism $\iota :\mathbf A\to \mathbf B$ of commutative monoids in $\V$.
The latter morphism serves as the unit morphism of the ${_{\mathbf A} \!\V}$-monoid $\mathbf B$. Write $(\iota \,)$
for the  corresponding commutative monoid in the symmetric monoidal category ${_{\mathbf A} \!\!\V}$. Then
a  (left) $(\iota \,)$-module  in  ${_{\mathbf A} \!\V}$  consists  of  a  (left) $\textbf{A}$-module  structure
$A\ww M \to M$  together  with  a morphism  $\rho:B \ww_\textbf{A}M \to M$ in ${_{\mathbf A} \!\V}$.
A straightforward  calculation  shows that the composite $$B\ww M \xr{q_{B,M}}B \ww_\textbf{A}M \xr{\rho} M$$
makes $M$  into  a $\textbf{B}$-module. In other direction, if $\varrho:B \ww M \to M$ is a $\textbf{B}$-module
structure on $M$, then the pair $(M, A\ww M \xr{\iota \ww M}  B \ww M \xr{\varrho} M)$ is a left $\textbf{A}$-
module and $\varrho=\varrho'\cdot q_{B,M}$ for a unique $\varrho':B \ww_\textbf{A}M \to M$. Then $(M, \varrho')$
is a left $(\iota \,)$-module in $_{\mathbf{B}}\!\V$. It  is  easily  checked  that  the above constructions  are
inverse  to  each other, and hence give an isomorphism ${_{(\iota )}\!({_{\mathbf A}\!\V})}\simeq {_{\mathbf{B}}\!\V}$
of categories. This allows us to identify the change-of-base  functor
$\iota^*=B \ww_\textbf{A}-: {_{\mathbf A}\!\V} \to {_{\mathbf B}\!\V}$ with the functor
$(\iota \,)\ww_\textbf{A}-: {_{\mathbf A}\!\V} \to {_{(\iota \,)}\!({_{\mathbf A}\!\V})}.$

One then constructs an $\mathcal{E}\!\downarrow \! \textbf{A}$-indexed category
$$\mathfrak {Alg}/\textbf{A}: (\mathcal{E}\!\downarrow \! \textbf{A})^{\text{op}} \to \mathbf{CAT} $$ as follows:
If $(\iota:\textbf{A}\to \textbf{B})$ is an object of $(\mathcal{E}\!\downarrow \! \textbf{A})^{\text{op}}$, then
$\mathfrak {Alg}/\textbf{A}(\iota)={_{\mathbf{B}}\!\!\V}$ and if \[\xymatrix @C=0.5in @R=0.02in{ &\textbf{B}\ar[dd]^{f}\\
\textbf{A}\ar[ru]^-{\iota} \ar[rd]_-{\iota'}& \\
&\textbf{B}' }\] is a morphism in $(\mathcal{E}\!\downarrow \! \textbf{A})^{\text{op}}$, then $f^*$ is the
functor $B'\ww \,_\textbf{B}-: {_{\mathbf{B}}\!\V} \to {_{\mathbf{B'}}\!\V}$. This $\mathcal{E}\!\downarrow \! \textbf{A}$-
indexed category satisfies the Beck-Chevalley condition (see the Appendix) and applying Theorem \ref{Amitsur.alg} gives:

\begin{theorem} \label{Amitsur.alg.gen.}Suppose that $\iota:\textbf{A}\to \textbf{B}$ is a morphism of commutative $\V$-algebras
such that the change-of-base functor $B \ww_\textbf{A} \,-:{_{\emph{\textbf{A}}}\!\V}\to {_{\emph{\textbf{B}}}\!\V}$
is comonadic. Then there is a natural isomorphism
$$\mathcal{H}^1(\iota\,, \ul{\emph{\textbf{Aut}}}_A^{\mathfrak {Alg}})\simeq \emph{\textbf{Ker}}(\emph{\textbf{Pic}}^{c}(\iota)).$$
\end{theorem}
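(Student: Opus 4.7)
The plan is to reduce the statement to Theorem~\ref{Amitsur.alg} by relativizing the whole construction to the symmetric monoidal category ${_\textbf{A}}\!\V$. Since $\textbf{A}$ is a commutative $\V$-algebra, the category ${_\textbf{A}}\!\V$ is itself a symmetric monoidal category (with tensor $\ww_\textbf{A}$, unit $A$ and inherited symmetry), and by our standing assumptions on $\V$ it inherits finite limits, image factorizations and reflexive coequalizers that are preserved by the tensor product. Under the canonical isomorphism $\textbf{A}\!\downarrow\!\verb"CAlg"(\V)\simeq \verb"CAlg"({_\textbf{A}}\!\V)$ recalled just before the statement of the theorem, the morphism of commutative $\V$-algebras $\iota:\textbf{A}\to\textbf{B}$ corresponds to a commutative algebra $(\iota)$ in ${_\textbf{A}}\!\V$ whose unit morphism is $\iota:A\to B$. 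Moreover, the change-of-base functor $\iota^{*}=B\ww_\textbf{A}-:{_\textbf{A}}\!\V\to {_\textbf{B}}\!\V$ is identified with the functor $B\ww_\textbf{A}-:{_\textbf{A}}\!\V\to{_{(\iota)}\!({_\textbf{A}}\!\V)}$ given by base change along the unit of the ${_\textbf{A}}\!\V$-algebra $(\iota)$.

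Next, I would apply Theorem~\ref{Amitsur.alg} in the symmetric monoidal category ${_\textbf{A}}\!\V$, taking as the distinguished commutative algebra the object $(\iota)$ of $\verb"CAlg"({_\textbf{A}}\!\V)$, whose unit morphism (internal to ${_\textbf{A}}\!\V$) is precisely $\iota$. The comonadicity of the functor $B\ww_\textbf{A}-:{_\textbf{A}}\!\V\to {_\textbf{B}}\!\V$ assumed in the hypothesis is exactly the comonadicity assumption of Theorem~\ref{Amitsur.alg} in this relative setting. Hence Theorem~\ref{Amitsur.alg} directly yields an isomorphism of abelian groups
\[
\mathcal{H}^1\bigl(\iota\,, \ul{\textbf{Aut}}_{(\iota)}^{\mathfrak {Alg}/\textbf{A}}\bigr)\simeq \textbf{Ker}\bigl(\textbf{Pic}^{c}_{{_\textbf{A}}\!\V}(\iota)\bigr),
\]
where all the constructions are performed inside ${_\textbf{A}}\!\V$.

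The last step is to match the relative invariants with their absolute counterparts. On the Picard side, an invertible symmetric $(\textbf{B},\textbf{B})$-bimodule in $\V$ is the same thing as an invertible symmetric $((\iota),(\iota))$-bimodule in ${_\textbf{A}}\!\V$, which gives $\textbf{Pic}^{c}_{{_\textbf{A}}\!\V}((\iota))\simeq \textbf{Pic}^{c}(\textbf{B})$ naturally in $\textbf{B}$, and similarly $\textbf{Pic}^{c}_{{_\textbf{A}}\!\V}(A)\simeq \textbf{Pic}^{c}(\textbf{A})$; under these identifications the map induced by $(\iota)$ is precisely $\textbf{Pic}^{c}(\iota)$. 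On the cohomology side, the indexed category $\mathfrak{Alg}/\textbf{A}$ described at the end of the section is obtained exactly by running the construction of $\mathfrak{Alg}$ internally in ${_\textbf{A}}\!\V$; hence $\ul{\textbf{Aut}}_{A}^{\mathfrak{Alg}}$ coincides with $\ul{\textbf{Aut}}_{(\iota)}^{\mathfrak{Alg}/\textbf{A}}$ after the identifications above, and the Amitsur complex of $\iota$ computed from $\mathfrak{Alg}/\textbf{A}$ is the one computed from $\mathfrak{Alg}$ starting from $\iota:\textbf{A}\to\textbf{B}$.

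The main obstacle, and the point where I would spend most of the care, is checking that the two comparison maps are natural isomorphisms of abelian groups: first, that the Beck--Chevalley condition is indeed satisfied for the $(\mathcal{E}\!\downarrow\!\textbf{A})$-indexed category $\mathfrak{Alg}/\textbf{A}$ (which one verifies exactly as for $\mathfrak{Alg}$, using that reflexive coequalizers commute with the tensor product in $\V$, hence also in ${_\textbf{A}}\!\V$), and second, that under the isomorphism $\verb"CAlg"({_\textbf{A}}\!\V)\simeq \textbf{A}\!\downarrow\!\verb"CAlg"(\V)$ the Amitsur simplicial object of $\iota$ internal to ${_\textbf{A}}\!\V$ coincides with the Amitsur simplicial object of $\iota$ used to define $\mathcal{H}^1(\iota\,, \ul{\textbf{Aut}}_A^{\mathfrak{Alg}})$. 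Once these identifications are in place, the theorem follows by transporting Theorem~\ref{Amitsur.alg} from $\V$ to ${_\textbf{A}}\!\V$.
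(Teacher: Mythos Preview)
Your proposal is correct and is essentially the paper's own argument: the discussion preceding the theorem sets up exactly the identifications you describe (the equivalence $\mathbf{A}\!\downarrow\!\texttt{CAlg}(\V)\simeq \texttt{CAlg}({_\mathbf{A}}\!\V)$, the identification of $\iota^*$ with $(\iota)\ww_\textbf{A}-$, and the Beck--Chevalley condition for $\mathfrak{Alg}/\textbf{A}$), after which the proof is literally ``applying Theorem~\ref{Amitsur.alg} gives''. One small notational slip: when you invoke Theorem~\ref{Amitsur.alg} internally in ${_\textbf{A}}\!\V$, the coefficient functor should be $\ul{\textbf{Aut}}_{A}^{\mathfrak{Alg}/\textbf{A}}$ (subscript the monoidal unit $A$ of ${_\textbf{A}}\!\V$, playing the role of $I$), not $\ul{\textbf{Aut}}_{(\iota)}^{\mathfrak{Alg}/\textbf{A}}$; with that correction the identification with $\ul{\textbf{Aut}}_A^{\mathfrak{Alg}}$ is immediate.
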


 Let $R \subseteq A$ be an extension of commutative rings.  If $\iota :R \to A$ denotes the inclusion map, then $\mathcal{H}^1(\iota\,, \ul{\au}_R^{\mathfrak {Alg}})$ is just the first Amitsur cohomology group $ H^1(A/R,  U)$,
where  $U$ denotes  the  ``units"  functor.  
When $\iota $ is a  faithfully flat extension of commutative rings, then the change-of-base functor
$A \ww_R \,-:{_R\textbf{Mod}}\to {_A\textbf{Mod}}$  is comonadic (see, for example, \cite{Me}). Moreover,
 Specializing Theorem \ref{Amitsur.alg.gen.} to this case gives the following
well-known result (see, for example, \cite[Corollary  4.6]{CR}):

\begin{corollary} Let $A$ be a faithfully flat  commutative $R$-algebra and  let
$\iota :  R \to A$  be  the  inclusion  map.  Then  there  is  a  natural  isomorphism
$$ H^1(A/R,  U) \to  \emph{\textbf{Ker}}(\emph{\textbf{Pic}}^{c}(\iota)).$$
\end{corollary}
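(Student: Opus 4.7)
The plan is to derive the corollary as a straightforward specialization of Theorem~\ref{Amitsur.alg.gen.} to the symmetric monoidal category $\V = \mathbf{Ab}$ of abelian groups. First I would check that $\mathbf{Ab}$ meets all the standing assumptions imposed on $\V$ throughout Subsection~\ref{Picard}: it is complete and cocomplete, admits image factorizations, and reflexive coequalizers are preserved by $\otimes = \otimes_{\mathbb{Z}}$. With this specialization, commutative $\V$-algebras are commutative rings, $(\mathbf{A},\mathbf{B})$-bimodules are $(A,B)$-bimodules in the usual sense, and the change-of-base functor attached to $\iota : R \to A$ is the classical $A \otimes_R - : {}_R\mathbf{Mod} \to {}_A\mathbf{Mod}$.

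Second, I would verify the comonadicity hypothesis of Theorem~\ref{Amitsur.alg.gen.} in the faithfully flat case. This is the classical fact referenced in \cite{Me}: faithful flatness of $A$ over $R$ makes $A \otimes_R -$ preserve equalizers (by flatness) and conservative (by faithfulness), so Beck's theorem yields comonadicity. Once this is in hand, Theorem~\ref{Amitsur.alg.gen.} produces a natural isomorphism
\[
\mathcal{H}^1(\iota\,, \ul{\mathbf{Aut}}_R^{\mathfrak{Alg}}) \;\simeq\; \mathbf{Ker}(\mathbf{Pic}^{c}(\iota)).
\]

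Third, I would identify the abstract coefficient functor $\ul{\mathbf{Aut}}_R^{\mathfrak{Alg}}$ with the classical units functor $U$. By Lemma~\ref{autalg}, for any commutative $R$-algebra $\iota' : R \to B$ one has $\ul{\mathbf{Aut}}_R^{\mathfrak{Alg}}(\iota') = \mathbf{Aut}_{{}_B\mathbf{Mod}}(B)$; but a left $B$-module automorphism of $B$ is exactly multiplication by a unit of $B$, so evaluation at $1$ gives a canonical isomorphism $\mathbf{Aut}_{{}_B\mathbf{Mod}}(B) \cong U(B)$, natural in $B$. Under this identification the formula~\eqref{a.2} for $\ul{\mathbf{Aut}}_R^{\mathfrak{Alg}}$ on morphisms reduces to the obvious map $U(B) \to U(B')$ induced by a ring morphism $B \to B'$ over $R$, and hence the cosimplicial abelian group obtained from $(A/R)_*$ coincides with the one whose cohomology is $H^1(A/R, U)$ by definition.

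Putting these three pieces together produces the desired natural isomorphism $H^1(A/R, U) \to \mathbf{Ker}(\mathbf{Pic}^{c}(\iota))$. The main obstacle is purely bookkeeping: making sure that the identification $\ul{\mathbf{Aut}}_R^{\mathfrak{Alg}} \cong U$ is natural with respect to the cosimplicial face and degeneracy maps coming from the Amitsur resolution~\eqref{Amitsure}, so that the abstract and classical first cohomology groups agree as abelian groups (and not merely as pointed sets). This is a direct calculation using~\eqref{a.2} combined with the explicit description of the face maps $i_0, i_1 : A \to A \otimes_R A$.
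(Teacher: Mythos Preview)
Your proposal is correct and follows essentially the same approach as the paper. The paper's argument is terser: immediately before the corollary it asserts (without further detail) that $\mathcal{H}^1(\iota,\ul{\mathbf{Aut}}_R^{\mathfrak{Alg}})$ coincides with the classical Amitsur group $H^1(A/R,U)$ and that faithful flatness makes $A\otimes_R-$ comonadic (citing \cite{Me}), then simply says the corollary is obtained by specializing Theorem~\ref{Amitsur.alg.gen.}; your three steps spell out exactly these two identifications plus the routine check that $\mathbf{Ab}$ satisfies the standing hypotheses on $\V$.
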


Theorem \ref{Amitsur.alg.gen.} also implies, in view of Remark \ref{sufficientcomonadicity}:

\begin{corollary}
Let $A$ be a separable commutative $R$-algebra and  let
$\iota :  R \to A$  be  the  inclusion  map.  Then  there  is  a  natural  isomorphism
$$ H^1(A/R,  U) \to  \emph{\textbf{Ker}}(\emph{\textbf{Pic}}^{c}(\iota)).$$
\end{corollary}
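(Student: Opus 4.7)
The plan is to deduce the corollary as a direct specialization of Theorem \ref{Amitsur.alg.gen.} to the symmetric monoidal category $\V=\mathbf{Ab}$ and to the ring homomorphism $\iota:R\to A$. The very first task is to identify the Amitsur cohomology group $\mathcal{H}^{1}(\iota,\ul{\mathbf{Aut}}_R^{\mathfrak{Alg}})$ appearing in that theorem with the classical group $H^{1}(A/R,U)$. Using Lemma \ref{autalg}, one has $\ul{\mathbf{Aut}}_R^{\mathfrak{Alg}}(R\xr{}B)=\mathbf{Aut}_{{}_B\mathbf{Mod}}(B)\cong U(B)$ for every commutative $R$-algebra $B$, so chasing the augmented simplicial object $(A/R)_{\ast}$ of \eqref{Amitsure} through the functor $\ul{\mathbf{Aut}}_R^{\mathfrak{Alg}}$ recovers exactly the Amitsur complex used to compute $H^{1}(A/R,U)$. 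This identification is essentially bookkeeping.

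The heart of the proof is verifying the comonadicity hypothesis of Theorem \ref{Amitsur.alg.gen.}, namely that the change-of-base functor $A\otimes_{R}-:{}_{R}\mathbf{Mod}\to{}_{A}\mathbf{Mod}$ is comonadic. I would do this by appealing to Remark \ref{sufficientcomonadicity}, according to which it suffices that the $(A,R)$-bimodule $A$, viewed as a 1-cell $A_{\iota}:\mathbf{R}\to\mathbf{A}$ in $\mathbf{Bim}(\mathbf{Ab})$, be a separable 1-cell in the sense of the paper, i.e., that the unit $\iota:R\to A^{\iota}\w A_{\iota}\cong A$ of the adjunction $A_{\iota}\dashv A^{\iota}$ admit a retraction in the category of $(R,R)$-bimodules. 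I would deduce this retraction from the hypothesis that $A$ is a separable commutative $R$-algebra, spelling out how the separability datum (e.g.\ a separability idempotent, or equivalently the splitting of the multiplication $m_A:A\otimes_R A\to A$ as $(A,A)$-bimodules) produces the required $(R,R)$-bimodule section of $\iota$ in the commutative setting.

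The main obstacle is precisely this translation step: matching the ring-theoretic notion ``separable commutative $R$-algebra'' with the bicategorical notion of ``separable 1-cell'' required in Remark \ref{sufficientcomonadicity}. Once an $(R,R)$-bimodule retraction of $\iota$ is exhibited, Remark \ref{sufficientcomonadicity} immediately gives comonadicity of the change-of-base functor $A\otimes_{R}-$, Theorem \ref{Amitsur.alg.gen.} yields the natural isomorphism $\mathcal{H}^{1}(\iota,\ul{\mathbf{Aut}}_R^{\mathfrak{Alg}})\cong\mathbf{Ker}(\mathbf{Pic}^{c}(\iota))$, and combining this with the identification of the left-hand side with $H^{1}(A/R,U)$ completes the proof.
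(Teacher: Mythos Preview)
Your proposal is correct and follows exactly the paper's route: the corollary is obtained from Theorem \ref{Amitsur.alg.gen.} by invoking Remark \ref{sufficientcomonadicity} to get comonadicity of $A\otimes_R-$ from separability, together with the identification (spelled out just before the preceding corollary) of $\mathcal{H}^1(\iota,\ul{\mathbf{Aut}}_R^{\mathfrak{Alg}})$ with $H^1(A/R,U)$. You are more scrupulous than the paper in flagging the translation between ring-theoretic separability and the bicategorical notion (split unit $\eta_f$), but the paper itself does not elaborate on this point.
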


\subsection{Bicomodules} As observed in Subsection \ref{duality}, there are dual versions of the exact sequences built in Subsections \ref{autoinvert}, \ref{exactPic} and \ref{comonadicity} from an adjunction in a bicategory. One reason of recording explicitly them is to have statements tailored to concrete situations, as Examples \ref{firmfirst} and \ref{firmsecond} illustrate. Next, with the same motivation, we will consider the bicategory of bicomodules, and we will record some exact sequences derived from an adjunction in this setting. We close with some applications.

Suppose that $\V=(V, \ww, I)$ is a monoidal category with
equalizers such that all the functors $X \ww - : V \to V$ as well as $- \ww  X : V \to V$ for $X \in \V$, preserve equalizers. Coalgebras and (\emph{left, right, bi-}) \emph{comodules} in $\V$ can be defined as algebras
and left (right, bi-) modules in the \emph{opposite} monoidal category $(\V^{\text{op}}, \otimes, I)$.
The resulting categories are denoted by $\verb"Coalg"(\V)$, $^\textbf{C}\!\V$, $\V^\textbf{C}$ and
$^\textbf{C}\V^{\textbf{D}}$, $\textbf{C}$ and $\textbf{D}$ being coalgebras in $\V$.

Let $\textbf{C}, \textbf{D}, \textbf{E}$ be $\V$-coalgebras.  Dualizing the tensor product of bimodules, the \emph{cotensor product} $X \Box_{\textbf{C}} Y$
of a $(\textbf{D},\textbf{C})$-bicomodule $(X, \vartheta^l,\vartheta^r)$ and a $(\textbf{C},\textbf{E})$-bicomodule
$(Y, \theta^l, \theta^r)$ over $\textbf{C}$ is defined to be the equalizer of the pair of morphisms
$$\xymatrix {X \Box_{\textbf{C}} Y \ar[r]^-{\kappa_{X,Y}}&X\ww Y \ar@{->}@<0.5ex>[r]^-{\theta^r \ww 1} \ar@ {->}@<-0.5ex> [r]_-{1
\ww \vartheta^l}& X\ww C \ww Y .}$$

Note that $X \Box_{\textbf{C}} Y$ is a $(\textbf{D},\textbf{E})$-bicomodule.

\bigskip

Recall (for example, from \cite{DMS}) that there is a bicategory $\textbf{Bicom}(\V)$, called the bicategory of $\V$-bicomodules,
in which
\begin{itemize}
  \item 0-cells are $\V$-coalgebras,
  \item for $\textbf{C}, \textbf{D}\in \verb"Coalg"(\V)$, the hom-category
$\textbf{Bicom}(\V)(\textbf{C}, \textbf{D})$ is the category $^{\textbf{C}}\V^\textbf{D}$ of
$(\textbf{C},\textbf{D})$-bicomodules,
  \item 2-cells are morphisms of bicomodules, with obvious vertical composition and identities, and
  \item horizontal composition is the opposite of the cotensor
  product of bicomodules; the identity 1-cell $\iota_{\textbf{C}}$ for $\textbf{C} \in \verb"Coalg"(\V)$, is the
  \emph{regular} $(\textbf{C},\textbf{C})$-bicomodule $\textbf{C}$, i.e. $(\textbf{C},\textbf{C})$-bicomodule
  $(\textbf{C}, \delta_{\textbf{C}},\delta_{\textbf{C}})$.
\end{itemize}

\medskip
Suppose in addition that $\V$ admits, besides equalizers, finite colimits and coimage factorizations.
In this case, for any two $\V$-coalgebras $\textbf{C}$ and $\textbf{C}'$, the category
$^{\textbf{C}}\V^{\textbf{C}'} =\textbf{Bicom}(\V)(\textbf{C}, \textbf{C}')$, being the
category of Eilenberg-Moore algebras for the comonad $\textbf{C}\ww -\ww \textbf{C}'$, also admits
coequalizers (see, for example, \cite{FB}) and coimage factorizations (see, \cite{A}).

\medskip

Applying Theorem \ref{exact.d.} gives:

\begin{theorem} \label{exact.d.coalg.}Suppose that $\V$ admits finite colimits, coimage factorizations and  equalizers that are
preserved by the tensor product. Let $\textbf{C}$ and $\textbf{D}$ be $\V$-coalgebras and
$\Lambda: \textbf{D}\nr \textbf{C}$ be a 1-cell admitting a right adjoint $\Lambda \!^*: \textbf{C} \nr \textbf{D}$
with unit $\eta_{\Lambda}:\iota_{\textbf{D}}\to \Lambda \!^* \w \Lambda=\Lambda \Box_{\textbf{C}}\Lambda \!^*$ and
counit $\e_{\Lambda}: \Lambda \w \Lambda \!^*= \Lambda\!^* \Box_{\textbf{D}}\Lambda \to \iota_{\textbf{C}}.$
If $\e_{\Lambda}$ is epimorphic, then
$$1 \xr{}\emph{\textbf{Aut}}_{^{\textbf{C}}\V^{\textbf{C}}}
(C) \xr{\widehat{\omega}_0}\emph{\textbf{Aut}}_{^{\textbf{D}}\V^{\textbf{C}}}
(\Lambda) \xr{\ol{\mathcal{D}}_{\Lambda\!^*}} \mathcal{Q}^\textbf{C}_{\Lambda}\xr{\Omega_{\Lambda\!^*}}
\emph{\textbf{Pic}}(\textbf{C})$$ is an exact sequence of groups, while
$$\mathcal{Q}^\textbf{C}_{\Lambda}\xr{\Omega_{\Lambda\!^*}}\emph{\textbf{Pic}}(\textbf{C})
\xr{[- \Box_\textbf{C} {\Lambda\!^*}]} \pi_0(^{\textbf{C}}\V^\textbf{D})$$ is an exact sequence of pointed sets.
\end{theorem}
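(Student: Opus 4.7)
The plan is to derive Theorem \ref{exact.d.coalg.} as an immediate consequence of Theorem \ref{exact.d.} applied to the bicategory $\textbf{Bicom}(\V)$, so the work reduces to verifying that the running hypotheses of Subsection \ref{duality} are met in this setting and then performing a notational translation.

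First I would check that every hom-category $^{\textbf{C}}\V^{\textbf{D}}$ of $\textbf{Bicom}(\V)$ admits finite colimits and coimage factorizations. This is essentially recorded in the paragraph preceding the theorem: since $^{\textbf{C}}\V^{\textbf{D}}$ is the Eilenberg--Moore category of the comonad $\textbf{C}\ww-\ww\textbf{D}$ on $\V$, and since $\V$ is assumed to have finite colimits, coimage factorizations and equalizers preserved by the tensor product on either side, the standard results of \cite{FB} and \cite{A} endow each hom-category with these structures. Together with the standing hypothesis of the theorem that $\e_{\Lambda}:\Lambda^*\Box_{\textbf{D}}\Lambda\to \iota_{\textbf{C}}$ is epimorphic in $^{\textbf{C}}\V^{\textbf{C}}$, this places us exactly in the situation of Theorem \ref{exact.d.}.

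Next I would apply Theorem \ref{exact.d.} to the adjunction $\Lambda\dashv \Lambda^*$ in $\textbf{Bicom}(\V)$, with the identifications $A=\textbf{D}$, $B=\textbf{C}$, $f=\Lambda$ and $f^*=\Lambda^*$. The translation to the notation of the current statement then reads $\textbf{Aut}_{[B,B]}(1_B)=\textbf{Aut}_{^{\textbf{C}}\V^{\textbf{C}}}(C)$, $\textbf{Aut}_{[A,B]}(f)=\textbf{Aut}_{^{\textbf{D}}\V^{\textbf{C}}}(\Lambda)$, $\mathcal{Q}^{B}_{f}=\mathcal{Q}^{\textbf{C}}_{\Lambda}$, $\textbf{Pic}(B)=\textbf{Pic}(\textbf{C})$ and $\pi_{0}([B,A])=\pi_{0}(^{\textbf{C}}\V^{\textbf{D}})$. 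Because horizontal composition in $\textbf{Bicom}(\V)$ is defined as the opposite of the cotensor product, the map $[f^*\w-]$ appearing in Theorem \ref{exact.d.} becomes precisely $[-\Box_{\textbf{C}}\Lambda^*]$. With these substitutions, the two exact sequences provided by Theorem \ref{exact.d.} are exactly the two sequences claimed here.

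The hard part, if there is one, is only bookkeeping: one must stay attentive to the opposite-composition convention, so that $\Lambda^*\w\Lambda$ is computed as $\Lambda\Box_{\textbf{C}}\Lambda^*$ and $\Lambda\w\Lambda^*$ as $\Lambda^*\Box_{\textbf{D}}\Lambda$, as already displayed in the theorem. Beyond this, no new mathematical content is required.
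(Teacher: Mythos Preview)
Your proposal is correct and matches the paper's own argument exactly: the theorem is obtained simply by specializing Theorem \ref{exact.d.} to the bicategory $\textbf{Bicom}(\V)$, after noting (as in the paragraph preceding the statement) that each hom-category $^{\textbf{C}}\V^{\textbf{D}}$, being the Eilenberg--Moore category of the comonad $\textbf{C}\ww-\ww\textbf{D}$, inherits finite colimits and coimage factorizations from $\V$. Your identifications $A=\textbf{D}$, $B=\textbf{C}$, $f=\Lambda$, $f^*=\Lambda^*$ and the translation of $[f^*\w-]$ into $[-\Box_{\textbf{C}}\Lambda^*]$ via the opposite-cotensor convention are all correct.
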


Each morphism $\phi : \textbf{D} \to \textbf{C}$ of $\V$-coalgebras determines a bicomodule $\textbf{D}_\phi: \textbf{D} \nr \textbf{C} $
defined to be $D$ together with the coactions $$(D \xr{\delta_D} D \ww D, D \xr{\delta_D} D \ww D \xr{D \ww \phi} D \ww C)$$
and a bicomodule $\textbf{D}^\phi: \textbf{C} \nr \textbf{D} $
defined to be $C$ together with the coactions $$(D \xr{\delta_D} D \ww D \xr{\phi \ww D} C \ww D, D \xr{\delta_D} D \ww D).$$
$\textbf{D}^\phi$ is right adjoint to $\textbf{D}_\phi$ in $\textbf{Bicom}(\V)$ with
$$\textbf{D}_\phi \w \textbf{D}^\phi = D \square_\textbf{D} D \simeq D \xr{\phi} C $$
as counit and $$D \xr{\ol{\delta}_D} D\square_\textbf{C} D=\textbf{D}^\phi \w \textbf{D}_\phi$$ as unit.
Here $D \xr{\ol{\delta}_D} D\square_\textbf{C} D$ is the unique morphism making the triangle
\[\xymatrix{D \ar[r]^-{\delta_D}\ar[rd]_{\ol{\delta}_D}& D \ww D \ar[d]^{\kappa_{D,D}}\\
&D\square_\textbf{C} D}\] commute.

It follows that a morphism $\phi: \textbf{D} \to \textbf{C}$ of $\V$-coalgebras gives rise to two functors
$$\phi^*=-\square_\textbf{C}D^\phi: {\V^{\mathbf{C}}} \to {\V^{\mathbf{D}}}$$
$$(Y, \vartheta_Y)\in {\V^{\mathbf{C}}} \longmapsto (Y\square_\textbf{C} D, Y \square_\textbf{C} \delta_D),$$
known as the \emph{change-of-cobase functor}, and
$$\phi_*=-\square_\textbf{D}D_\phi: {\V^{\mathbf{D}}} \to {\V^{\mathbf{C}}},$$ where
for any right $\textbf{D}$-comodule $(X, \theta_X)$,  $\phi_*(X, \theta_X)=X $ is a right $\textbf{C}$-comodule via the coaction
$$X \xr{\theta_X}  X\otimes D \xr{X \ww \phi} X \ww C;$$

\medskip

Since $\textbf{D}^\phi$ is right adjoint to $\textbf{D}_\phi$ in $\textbf{Bicom}(\V)$, it follows that $\phi^*$ is left adjoint to $\phi_*$.

\medskip

Assume further that  $\V$ is symmetric with symmetry $\tau$.
It is well-known that the category of cocommutative $\V$-coalgebras, $\verb"CCoalg"(\V)$, has pullbacks and
they are constructed as cotensor products: For any two morphism $\phi: \textbf{D} \to \textbf{C}$ and $\phi':
\textbf{D}' \to \textbf{C}$ in $\verb"CCoalg"(\V)$, the diagram
$$\xymatrix{{\mathbf{D}'\square_\textbf{C}\textbf{D}} \ar[r]^{p_{_\textbf{D}}} \ar[d]_{p_{\textbf{D}'}} & {\mathbf{D}} \ar[d]^{\phi}\\
{\mathbf{D}'} \ar[r]_{\phi'}& {\mathbf{C}}\,,}$$ where $p_{\textbf{D}}=\e_{D'}\square_\textbf{C}\textbf{D}$ and
$p_{\textbf{D}'}=\textbf{D}'\square_\textbf{C}\e_D$, is a pullback in $\verb"CCoalg"(\V)$.

Define a $\verb"CCoalg"(\V)$-indexed category $$\W: (\verb"CCoalg"(\V))^{\text{op}} \to \mathbf{CAT} $$ by setting
$$\W(\textbf{C})={\V^{\mathbf{C}}} \quad \text{and}\quad  \W(\textbf{D} \xr{\phi} \textbf{C})=
{\V^{\mathbf{C}}} \xr{\phi^*}{\V^{\mathbf{D}}},$$ where
$\phi^*=-\square_\textbf{C} D:\V^{\mathbf{C}} \to {\V^{\mathbf{D}}}$ is the change-of-cobase functor. Since for any morphism
${\phi: \textbf{D} \to\textbf{C}}$ in $\verb"CCoalg"(\V)$,
the functor $\phi^*=-\square_\textbf{C} D$ admits as a left adjoint the
forgetful functor $\V^{\mathbf{D}} \xr{\phi_*=-\square_\textbf{D} D}{\V^{\mathbf{C}}}$,
the $\verb"CCoalg"(\V)$-indexed category $\W$ admits coproducts iff it satisfies the Beck-Chevalley condition,
i.e., for any morphism $\phi':\textbf{D}' \to \textbf{C}$ in  $\verb"CCoalg"(\V)$, the diagram
$$\xymatrix{\V^{\mathbf{D}'} \ar[r]^{(p_{\textbf{D}'})^*} \ar[d]_{(\phi')_*} & \V^{\mathbf{D}'\square_\textbf{C}\textbf{D}} \ar[d]^{(p_{\textbf{D}})_*}\\
\V^{\mathbf{C}} \ar[r]_{\phi^*}& \V^{\mathbf{D}}}$$ commutes up to canonical isomorphism.
It is easily to check that this condition is equivalent to saying that for any $X \in \V^{\mathbf{D}'}$,
one has an isomorphism $$X \square_{\textbf{D}'}({D'\square_\textbf{C}D}) \simeq X\square_\textbf{C}D,$$
and this is certainly the case, since the tensor product in $\V$ preserves reflexive equalizers
by our assumption on $\V$. Thus, $\W$ admits coproducts.

As in the case of algebras, given a cocommutative $\V$-coalgebra $\textbf{C}$, we get the functor
$$\ul{\textbf{Aut}}^{\W}_{\,\textbf{C}}:(\verb"CCoalg"\!\downarrow \!\mathbf C)^{\text{op}} \to \textbf{Group}$$
sending  each object $\phi: \textbf{D}\to \textbf{C}$ of $(\verb"CCoalg"\!\downarrow \!\mathbf C)^{\text{op}}$ (i.e. a morphism
$\phi: \textbf{D}\to \textbf{C}$ of cocommutative $\V$-coalgebras) to the group
\begin{equation}\label{ca.1}
\ul{\textbf{Aut}}_{\,\textbf{C}}^{\W}(\textbf{D}\xr{\phi} \textbf{C})=\textbf{Aut}_{ \V^{\mathbf D}}(D).
\end{equation}
On morphisms it is defined in the following way: Given a morphism $f:(\textbf{D},\phi) \to (\textbf{D}',\phi')$ in
$(\verb"CCoalg"\!\downarrow \!\mathbf C)^{\text{op}}$ (i.e., a commutative diagram
\[\xymatrix{& \textbf{C} &\\ \textbf{D}' \ar[ru]^{\phi'} \ar[rr]_f&& \textbf{D} \ar[lu]_{\phi} }\] in $\verb"CCoalg"\!\downarrow \!\mathbf C$), then
\begin{equation}\label{ca.2}
\ul{\textbf{Aut}}_{\,\textbf{C}}^{\W}(f:(\textbf{D},\phi) \to (\textbf{D}',\phi'))(\sigma)=(\e_D \ww D')\cdot(\sigma \ww D') \cdot(\phi \ww D')\cdot \delta_{{D}'}
\end{equation} for all $\sigma \in \textbf{Aut}_{ \V^{\mathbf D}}(D)$.

Note that, since for each morphism $\phi:\textbf{D} \to \textbf{C}$ of cocommutative $\V$-coalgebras, $\ul{\textbf{Aut}}^{\W}_{\,\textbf{C}}(\phi)$
is an abelian group, it follows that the functor $\ul{\textbf{Aut}}_{\,\textbf{C}}^{\W}$ also takes values in the category
of abelian groups.

Given a morphism $\phi : \textbf{D} \to \textbf{C}$ of cocommutative $\V$-coalgebras, consider the associated augmented simplicial object
\[\xymatrix{(\textbf{D} /\textbf{C} )_*: \ar@{}[r]|(.65){.\,.\,.}  & D^{2}  \ar[rr]|-{\pa_1} \ar@<2mm>[rr]^-{\pa_0}
  \ar@<-2mm>[rr]_-{\pa_2}&& D^{1} \ar@<-1mm>[r]_-{\pa_1} \ar@/_2pc/ [ll]_{s_0, s_1} \ar@<1mm>[r]^-{\pa_0} & D^{0} \ar@/_2pc/ [l]_{s_0}\ar[r]^-{\phi} & C
}\]  where
\begin{itemize}
  \item $D^{0}=D$
  \item $D^{n}=\underbrace{D\square_\textbf{C}D \cdots \square_\textbf{C}D}_{(n+1)-\, \text{times}}$ \,\,\,for all $n\geq 1$
  \item $\pa_i= D^{i}\square_\textbf{C} \phi\square_\textbf{C}D^{n-i}:D^{n+1} \to D^{n}$ \,\,\,for all $0\leq i \leq n$
  \item $s_j=D^{j}\square_\textbf{C} \ol{\delta}_D\square_\textbf{C}D^{(n-j-1)}:D^{(n-1)}\to D^{(n)}$ \,\,\,for all $0\leq j \leq n$.
\end{itemize}

Applying the functor $\ul{\textbf{Aut}}^{\W}_{\,\textbf{C}}:(\verb"CCoalg"\!\downarrow \!\mathbf C)^{\text{op}} \to \textbf{Group}$ to
$(\textbf{D} /\textbf{C} )_*$, we obtain the following augmented cosimplicial group
\[\xymatrix{(\textbf{D} /\textbf{C}, \ul{\textbf{Aut}}_{\,\textbf{C}}^{\W})_*: \textbf{Aut}_{ \V^{\mathbf C}}(C) \ar[rr]^-{\ul{\textbf{Aut}}_{\,\textbf{C}}^{\W}(\phi)} &&
\textbf{Aut}_{ \V^{\mathbf D}}(D) \ar@<-2mm>[rr]_-{\ul{\textbf{Aut}}_{\,\textbf{C}}^{\W}(\pa_1)}  \ar@<2mm>[rr]^-{\ul{\textbf{Aut}}_{\,\textbf{C}}^{\W}(\pa_0)}
&& \textbf{Aut}_{ \V^{\mathbf D \square_\textbf{C}\!\mathbf D }}(D \square_\textbf{C} D) \ar@/_3pc/ [ll]_{\ul{\textbf{Aut}}_{\,\textbf{C}}^{\W}(s_0)} {.\,.\,.}&}\] Since for any $n \geq 0$, $\textbf{D}^{n}$ is a cocommutative $\V$-coalgebra, all the categories $\V^{\mathbf D^n}$ are symmetric monoidal
with monoidal unit $D^n$, it follows  that $(\textbf{D} /\textbf{C}, \ul{\textbf{Aut}}_{\,\textbf{C}}^{\W})_*$
is in fact an augmented abelian cosimplicial group.

Write $\textbf{Pic}^{c}(\textbf{C})$ for the subgroup of $\textbf{Pic}(\textbf{C})$ consisting of all classes of invertible
$(\textbf{C},\textbf{C})$-bicomodules $(X, \vartheta_l:X \to C \ww X, \vartheta_r: \to X \ww C)$ such that
$\rho_r=\tau_{C,X}\cdot \rho_l$. Then $\textbf{Pic}^{c}(\textbf{C})$ is an abelian group. Moreover,
given a morphism $\phi: \textbf{D}\to \textbf{C}$ of cocommutative $\V$-coalgebras, the map
$$\textbf{Pic}^{c}(\phi):\textbf{Pic}^{c}(\textbf{C})\to \textbf{Pic}^{c}(\textbf{D})$$
defined  by $\textbf{Pic}^{c}(\phi)([P])=[P\Box_\textbf{C} D]$, is a homomorphism of abelian groups.

Now with the complex of abelian groups
$$0 \xr{} \textbf{Aut}_{ \V^{\mathbf C}}(C) \xr{\ul{\textbf{Aut}}_{\,\textbf{C}}^{\W}(\phi)}
\textbf{Aut}_{ \V^{\mathbf D}}(D) \xr{\Delta_1} \textbf{Aut}_{ \V^{\mathbf D \square_\textbf{C}\!\mathbf D }}
(D \square_\textbf{C} D) \xr{\Delta_2} \cdots$$
$$\Delta_n=\prod_{i=0}^{n} \textbf{Aut}_{\,\textbf{C}}^{\W}(\pa_n)^{(-1)^n}, \, n\geq 1,$$
corresponding to the augmented abelian cosimplicial group $(\textbf{D} /\textbf{C}, \ul{\textbf{Aut}}_{\,\textbf{C}}^{\W})_*\,,$
by arguments similar to those used in the of Theorems \ref{Amitsur.alg.gen.}, we derive
the following result.

\begin{theorem} \label{Amitsur.coalg.gen.} Suppose that $\phi:\mathbf{D}\to \mathbf{C}$ is a morphism of cocommutative
$\V$-coalgebras such that the change-of-cobse functor $\phi^*=-\square_\textbf{C}D: {\V^{\mathbf{C}}} \to {\V^{\mathbf{D}}}$
is monadic. Then there is a natural isomorphism
$$\mathcal{H}^1(\phi\,, \ul{\mathbf{Aut}}_{\,\mathbf{C}}^{\W})\simeq \mathbf{Ker}(\mathbf{Pic}^{c}(\phi)).$$
\end{theorem}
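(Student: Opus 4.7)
The strategy is to dualize the proof of Theorem \ref{Amitsur.alg.gen.} to the bicategory $\textbf{Bicom}(\V)$, working with the adjunction $\textbf{D}_\phi \dashv \textbf{D}^\phi$ in place of $\textbf{B}_\iota \dashv \textbf{B}^\iota$. Everything needed is in place: cocommutativity of $\textbf{C}$ and $\textbf{D}$ plays the role of commutativity, the monadicity of $\phi^{*}=-\Box_\textbf{C} D$ replaces the comonadicity of the algebra change--of--base functor, and Theorem \ref{exact.d.coalg.} together with Proposition \ref{monadic} are the analogues of Theorem \ref{main.alg} and Proposition \ref{comonadic}.

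The first step is to build a coalgebra version of the exact sequence \eqref{seq.5}. Applying Theorem \ref{exact.d.coalg.} to $\textbf{D}_\phi \dashv \textbf{D}^\phi$ (so $A=\textbf{D}$, $B=\textbf{C}$ in the notation of Section \ref{SucesionesExactas}), then substituting $\textbf{Aut}_{\textbf{D}\texttt{-ring}}(\mathcal{S}_{\textbf{D}_\phi})$ for $\mathcal{Q}^{\textbf{C}}_{\textbf{D}_\phi}$ via the isomorphism $\overline{\Gamma}_{\textbf{D}^\phi}$ of Proposition \ref{monadic}, and finally restricting to the cocommutative Picard subgroup $\textbf{Pic}^{c}$ (so that the target $\pi_0({}^{\textbf{C}}\V^{\textbf{D}})$ factors through $\textbf{Pic}^{c}(\phi): \textbf{Pic}^{c}(\textbf{C}) \to \textbf{Pic}^{c}(\textbf{D})$, exactly as in the transition from \eqref{seq.3} to \eqref{seq.4}), yields an exact sequence of abelian groups
$$0 \to \textbf{Aut}_{{}^{\textbf{C}}\V^{\textbf{C}}}(C) \to \textbf{Aut}_{{}^{\textbf{D}}\V^{\textbf{C}}}(\textbf{D}_\phi) \xr{\kappa_\phi} \textbf{Aut}_{\textbf{D}\texttt{-ring}}(\mathcal{S}_{\textbf{D}_\phi}) \xr{o_\phi} \textbf{Pic}^{c}(\textbf{C}) \xr{\textbf{Pic}^{c}(\phi)} \textbf{Pic}^{c}(\textbf{D}),$$
where $\kappa_\phi = \overline{\Gamma}_{\textbf{D}^\phi}\cdot \overline{\mathcal{D}}_{\textbf{D}^\phi}$, giving the coalgebra counterpart of Proposition \ref{ker.pic}: $\textbf{Coker}(\kappa_\phi) \simeq \textbf{Ker}(\textbf{Pic}^{c}(\phi))$.

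The second step is to identify $\textbf{Aut}_{\textbf{D}\texttt{-ring}}(\mathcal{S}_{\textbf{D}_\phi})$ with the group $\mathcal{Z}^1(\phi, \ul{\textbf{Aut}}_{\textbf{C}}^{\W})$ of Amitsur 1--cocycles. Letting $\mathcal{T}_\phi$ be the monad on $\V^{\textbf{C}}$ generated by $\phi^{*}\dashv \phi_{*}$, the dual of \cite[Proposition 4.5]{GM} identifies $\mathcal{T}_\phi\text{-}\textbf{Alg}(D)$ with $\textbf{End}_{\textbf{D}\texttt{-ring}}(\mathcal{S}_{\textbf{D}_\phi})$, and the dual of Proposition \ref{end=aut} (obtained by dualizing Lemma \ref{l=r} and Remark \ref{intersection} under the cocommutativity hypothesis) shows that this endomorphism monoid is actually a group. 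The dual of Theorem \ref{BRB} then identifies the Eilenberg--Moore algebras of $\mathcal{T}_\phi$ with the category of descent data along $\phi^{*}$, and the dual of Proposition \ref{Grothendieck} identifies descent data with Amitsur 1--cocycles, yielding the desired isomorphism.

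Matching of the differentials completes the argument. A diagrammatic calculation analogous to the one at the end of Theorem \ref{Amitsur.alg} (now using cocommutativity of $\textbf{D}$ and formula \eqref{ca.2}) gives $\ul{\textbf{Aut}}_{\textbf{C}}^{\W}(\pa_0)(\lambda) = \lambda \Box_\textbf{C} D$ and $\ul{\textbf{Aut}}_{\textbf{C}}^{\W}(\pa_1)(\lambda) = D \Box_\textbf{C} \lambda$, so $\Delta_1(\lambda) = (D \Box_\textbf{C} \lambda^{-1})\cdot (\lambda \Box_\textbf{C} D)$. The dual of Proposition \ref{lambda} provides an identical expression for $\kappa_\phi(\lambda)$ after transport along the identification of the second step; hence $\textbf{Coker}(\kappa_\phi) \simeq \mathcal{H}^1(\phi, \ul{\textbf{Aut}}_{\textbf{C}}^{\W})$, and combining with the first step yields the natural isomorphism $\mathcal{H}^1(\phi, \ul{\textbf{Aut}}_{\textbf{C}}^{\W}) \simeq \textbf{Ker}(\textbf{Pic}^{c}(\phi))$. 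The main obstacle will be the systematic dualization of Section \ref{SucesionesExactas} and of Proposition \ref{ker.pic}: although \emph{formally} dual, swapping left/right cotensor products and monadic/comonadic hypotheses requires careful bookkeeping, and one must formulate the cocommutative analogues of Lemma \ref{l=r} and Remark \ref{intersection} so that $\mathcal{Q}^{\textbf{C}}_{\textbf{D}_\phi}$ itself, rather than the intersection of its left and right versions, appears directly in the exact sequence.
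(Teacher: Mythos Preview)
Your proposal is correct and takes essentially the same approach as the paper: the paper's own proof consists of the single sentence ``by arguments similar to those used in the [proof] of Theorems \ref{Amitsur.alg.gen.}, we derive the following result,'' and your sketch is precisely a detailed unfolding of that dualization, invoking Theorem \ref{exact.d.coalg.}, Proposition \ref{monadic}, the duals of Propositions \ref{end=aut} and \ref{lambda}, and the descent machinery (Theorem \ref{BRB}, Proposition \ref{Grothendieck}) exactly as the algebra case does.
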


Specializing Theorem \ref{Amitsur.coalg.gen.} to the case where $\V=\verb"Vect"_k$ is the category of vector spaces over a field $k$
and using that for any cocommutative $k$-coalgebra $\textbf{C}$, $\textbf{Pic}^{c}(\textbf{C})=0$ (see, for example,
\cite[Proposition 3.2.14]{CVO} or \cite[Proposition 4.1]{CGTV}), we get the following version of Hilbert's theorem 90 for cocommutatvie 
coalgebras:

\begin{theorem} \label{Hilbert} Suppose that $\phi:\mathbf{D}\to \mathbf{C}$ is a morphism of cocommutative $k$-coalgebras
such that the change-of-cobase functor $-\square_\mathbf{C}D: {\verb"Vect"_k^{\mathbf{C}}} \to {\verb"Vect"_k^{\mathbf{D}}}$
is monadic. Then
$$\mathcal{H}^1(\phi\,, \ul{\mathbf{Aut}}_{\,\mathbf{C}}^{\W})=0.$$
\end{theorem}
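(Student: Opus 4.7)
The plan is to deduce Theorem \ref{Hilbert} as a direct consequence of Theorem \ref{Amitsur.coalg.gen.} together with the vanishing of the ``commutative'' Picard group for cocommutative coalgebras over a field. First I would verify that the hypotheses required to invoke Theorem \ref{Amitsur.coalg.gen.} are met in the setting $\V = \verb"Vect"_k$: this category is symmetric monoidal, admits finite colimits, coimage factorizations, and equalizers, and the tensor product $- \otimes_k V$ is exact for every $V$, so in particular it preserves equalizers. Hence all the standing hypotheses leading up to Theorem \ref{Amitsur.coalg.gen.} are fulfilled, and applying that result to the given morphism $\phi : \mathbf{D} \to \mathbf{C}$ (whose associated change-of-cobase functor $-\square_{\mathbf{C}}D$ is monadic by hypothesis) yields a natural isomorphism
\[
\mathcal{H}^1(\phi\,, \ul{\mathbf{Aut}}_{\,\mathbf{C}}^{\W}) \simeq \mathbf{Ker}(\mathbf{Pic}^{c}(\phi)).
\]

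Next, I would invoke the fact, recalled from \cite[Proposition 3.2.14]{CVO} (or equivalently \cite[Proposition 4.1]{CGTV}), that for any cocommutative $k$-coalgebra $\mathbf{C}$ the group $\mathbf{Pic}^{c}(\mathbf{C})$ is trivial. Applied to the source of the homomorphism $\mathbf{Pic}^{c}(\phi) : \mathbf{Pic}^{c}(\mathbf{C}) \to \mathbf{Pic}^{c}(\mathbf{D})$, this shows at once that the whole domain is zero, hence $\mathbf{Ker}(\mathbf{Pic}^{c}(\phi)) = 0$. Combining with the isomorphism displayed above gives $\mathcal{H}^1(\phi, \ul{\mathbf{Aut}}_{\,\mathbf{C}}^{\W}) = 0$, which is the desired statement.

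The only genuine content beyond citation is making sure that Theorem \ref{Amitsur.coalg.gen.} is applicable, so there is no real obstacle; I expect the entire proof to be essentially a one-paragraph deduction. The substantive work has already been done in establishing Theorem \ref{Amitsur.coalg.gen.}, which is the coalgebra dual of Theorem \ref{Amitsur.alg.gen.}; the Hilbert 90 flavour arises purely from the classical vanishing of $\mathbf{Pic}^{c}$ on cocommutative coalgebras over a field.
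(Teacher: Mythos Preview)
Your proposal is correct and follows exactly the paper's own argument: the theorem is stated as an immediate specialization of Theorem \ref{Amitsur.coalg.gen.} to $\V = \texttt{Vect}_k$, combined with the cited fact that $\mathbf{Pic}^{c}(\mathbf{C}) = 0$ for any cocommutative $k$-coalgebra $\mathbf{C}$.
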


\appendix

\section{Some classical results and constructions}\label{Appendix}

Let $\A$ be a category with pullbacks. An $\A$-indexed category $\X$ is
a pseudo-functor $\A^{\text{op}} \to \mathbf{CAT}$,
where $\mathbf{CAT}$ denotes the 2-category of locally small (but possibly large) categories,
explicitly given by the data of a family of categories $\X(a)$,
indexed by the objects of $\A$, with \emph{change of base functors} $$\iota^*:\X(a) \to \X(b)$$ for each
morphism $\iota:b \to a$ of $\A$ and with additional structure expressing the idea of a
pseudo-functor (see \cite{McP}, \cite{PS}).

A simple but important example of an $\A$-indexed category is the so-called \emph{basic}
$\A$-\emph{indexed category} $\A\!\downarrow \!-:\A^{\text{op}} \to \mathbf{CAT}$ that to any object $a \in \A$
associates the slice category $\A \!\downarrow \!a$, and to a morphism $\iota:b \to a$  the functor
$\iota^* : \A\!\downarrow \! a \to \A \!\downarrow \! b$ given by pulling back along $\iota$.

Fix and object $a$ of  $\A$, and $\X : \A^{\text{op}} \to \mathbf{CAT}$ an $\A$--indexed category. For each object $x \in \X(a)$,  let us define a functor
$$\ul{\textbf{Aut}}^\mathcal{X}_{\,x}:(\A \!\downarrow \! a)^{\text{op}} \to \textbf{Group}$$
sending  each object $b \xr{\kappa} a$ of $\A \!\downarrow \! a$ to the group
\begin{equation}\label{AutX}
\ul{\textbf{Aut}}^\mathcal{X}_{\,x}(\kappa)\stackrel{\text{def}}=\textbf{Aut}_{\X(b)}(\kappa^*(x)).
\end{equation}

\subsection{Descent} Consider the augmented simplicial complex
\begin{equation}\label{Amitsurg}\xymatrix{(b/a)_*: \ar@{}[r]|(.65){.\,.\,.}  & (b/a)_2  \ar[rr]|-{\pa_1} \ar@<2mm>[rr]^-{\pa_0}
  \ar@<-2mm>[rr]_-{\pa_2}&& (b/a)_1 \ar@<-1mm>[r]_-{\pa_1} \ar@/_2pc/ [ll]_{s_0, s_1} \ar@<1mm>[r]^-{\pa_0} & (b/a)_0 \ar@/_2pc/ [l]_{s_0}\ar[r]^-{\iota} & a
}\end{equation}
 associated to any morphism $\iota : b \to a$ in $\A$,  where
\begin{itemize}
  \item $(b/a)_0=b$
  \item $(b/a)_n=\underbrace{b \times_a b \times_a \cdots \times_a b}_{(n+1)- \text{times}}$ \,\,\,for all $n\geq 1$
  \item $\pa_i=<p_1, p_2, ..., p_{i-1}, p_{i+1}, ..., p_{n+1}>:a_{n} \to a_{n-1}$ \,\,\,for all $0\leq i \leq n$
  \item $s_j=\underbrace{b \times_a b \times_a \cdots \times_a b}_{j- \text{times}}\times_a \,\Delta_{b/a} \times_a\underbrace{b \times_a b \times_a \cdots \times_a b}_{(n-j-1)- \text{times}}:(b/a)_{n-1} \to (b/a)_n$ \,\,\,for all $0\leq j \leq n$.
\end{itemize} Here $p_{i}:\underbrace{b \times_a b \times_a \cdots \times_a b}_{(n+1)- \text{times}}\to b$ is the projection to the $i$-th factor, while $\Delta_{b/a}$ is the diagonal morphism $b \to b \times_a b$.

Let us recall from \cite{Gr}  the definition of  the category $\mathrm{Des}_\X(\iota)$ of
$\X$-\emph{descent data relative to} $\iota$. Its objects are pairs $(x, \vartheta)$,
with $x$ an object of $\X(b)$ and $\vartheta: \pa_1^*(x) \simeq \pa_0^*(x)$ an isomorphism in $\X(b\times_a b)$
such that $s_0^*(\vartheta)=1$ and the diagram
$$
\xymatrix{\pa_{2}^*\pa_1^*(x) \ar[r]^{\pa_{2}^*(\vartheta)} \ar[d]_{\simeq}& \pa_{2}^*\pa_0^*(x)
\ar[r]^{\simeq}& \pa_{0}^*\pa_1^*(x) \ar[d]^{\pa_{0}^*(\vartheta)}\\
\pa_{1}^* \pa_1^*(x) \ar[r]_{\pa_{1}^*(\vartheta)}& \pa_{1}^* \pa_0^*(x) \ar[r]_{\simeq} & \pa_{0}^*\pa_0^*(x)}
$$ commutes in $\X(b\times_a b\times_a b)$. Here the labeled isomorphisms are the canonical ones of the
$\A$-indexed category $\X$ coming from the simplicial identities
$$\pa_i \pa_j = \pa_{j-1} \pa_i \,\,\,\, (i < j).$$

A morphism $f:(x, \vartheta) \to (y, \theta)$ in $\mathrm{Des}_\X(\iota)$ is a morphism
$f:x \to y$ in $\X(b)$ which commutes with the descent
data $\vartheta$ and $\theta$ in the sense that the diagram
$$
\xymatrix{\pa_1^*(x) \ar[r]^{\vartheta} \ar[d]_{\pa_1^*(f)} & \pa_0^*(x) \ar[d]^{\pa_0^*(f)}\\
\pa_1^*(y) \ar[r]_{\theta}& \pa_0^*(y)}
$$ commutes in $\X(b\times_a b)$.

If $z$ is an object of $\X(a)$, then $\iota^*(z)$ comes equipped with \emph{canonical descent datum} given by the composite
$$\pa_1^*(\iota^*(z))\simeq (\iota \pa_1)^*(z)=(\iota \pa_0)^*(z)\simeq \pa_0^*(\iota^*(z))$$ of canonical isomorphisms. In other words,
the functor $\iota^*$ factors as
$$
\xymatrix{\X(a) \ar[rd]_{\iota^*} \ar[r]^-{K_\iota} & \mathrm{Des}_\X(\iota) \ar[d]^{U}\\
& \X(b)}
$$ where $U$ is the evident forgetful functor, and $K_\iota$ sends $z \in \X(a)$ to $\iota^*(z)$ equipped with the canonical descent datum.

\begin{definition}
$\iota$ is called an $\X$-\emph{descent morphism} if $K_\iota$ is full and faithful, and an \emph{effective} $\X$-\emph{descent morphism} if
$K_\iota$ is an equivalence of categories.
\end{definition}

Let $x \in \X(b)$. Write $\mathrm{Des}_\X(x)$ for the set of all descent data on $x$. Two descent data $(x, \vartheta)$ and
$(x, \vartheta')$ on $x$ are called \emph{equivalent} if they are isomorphic objects in the category $\mathrm{Des}_\X(\iota)$.
The set of equivalence classes of descent data on $x$ is denoted by $\textbf{Des}_\X(x)$. If $x=\iota^*(y)$ for some
$y \in \X(a)$, then $\textbf{Des}_\X(\iota^*(y))$ is a pointed set with the class of canonical descent datum as a distinguished element.

\begin{definition}\label{havingproducts}
An $\A$-indexed category $\X$ \emph{has products} (resp. \emph{coproducts}) if for each morphism $\iota:b \to a$ in $\A$,
the change of base functor $\iota^*:\X(a) \to \X(b)$ admits a right (resp. left) adjoint $\Pi_\iota: \X(b) \to \X(a)$
(resp. $\Sigma_\iota: \X(b) \to \X(a)$) and the \emph{Beck-Chevalley condition} is satisfied, i.e.,  for every pullback diagram
$$
\xymatrix{c \ar[r]^{q} \ar[d]_{p} & b \ar[d]^{\iota}\\
b' \ar[r]_{\iota '}& a\,,}$$ the  following  diagram
$$
\xymatrix{\X(b) \ar[r]^{q^*} \ar[d]_{\Pi_\iota} & \X(c) \ar[d]^{\Pi_p}\\
\X(a) \ar[r]_{(\iota ')^*}& \X(b')}\qquad \,\,\, \text{(resp.}\,\,
\xymatrix{\X(b') \ar[r]^{p^*} \ar[d]_{\Sigma_{\iota'}} & \X(c) \ar[d]^{\Sigma_q}\\
\X(a) \ar[r]_{\iota^*}& \X(b)})$$ commutes up to canonical isomorphism.
\end{definition}

We shall need the following version of the B\'{e}nabou-Roubaud-Beck theorem (cf. \cite[Proposition B1.5.5]{Jon}):

\begin{theorem}\label{BRB} For an $\A$-indexed category $\X:\A^{\text{op}} \to \mathbf{CAT}$ having
products (resp. coproducts) and for an arbitrary morphism $\iota:b \to a$ in $\A$,
the category  $\mathrm{Des}_\A(\iota)$ of descent data with respect to $\iota$ is
isomorphic to the Eilenberg-Moore category of coalgebras (resp. algbras) for the
comonad  (resp. monad) $\textbf{G}_\iota$ (resp. $\textbf{T}_\iota$) on $\X(b)$ generated by the adjoint pair
$\iota^* \dashv \Pi_\iota:\X(b) \to \X(a)$  (resp. $\Sigma_\iota \dashv \iota^*:\X(a) \to \X(b)$. Moreover,
modulo this equivalence, the functor $K_\iota : \X(a) \to \mathrm{Des}_\X(\iota)$  corresponds to the comparison
functor $\X(a) \to (\X(b))^{\textbf{G}_\iota}$  (resp. $\X(a) \to (\X(b))_{\textbf{T}_\iota}$). Thus, $\iota$ is
an effective $\X$-descent morphism if and only if the functor $\iota^*:\X(a) \to \X(b)$ is comonadic (resp. monadic).
\end{theorem}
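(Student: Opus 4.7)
The plan is to handle the ``products'' case in full, the ``coproducts'' case following by passing to $\X^{\text{op}}$ (under which descent data are self-dual in the appropriate sense, comonads become monads, and comonadicity becomes monadicity). Consider the pullback square
\[
\xymatrix@C=0.5in{b\times_a b \ar[r]^-{\pa_0} \ar[d]_-{\pa_1} & b \ar[d]^-{\iota}\\ b \ar[r]_-{\iota} & a.}
\]
Since $\X$ has products, the Beck--Chevalley condition supplies a canonical natural isomorphism $\beta:\iota^*\Pi_\iota \simeq \Pi_{\pa_1}\pa_0^*$. Composing with the $(\pa_1^*\dashv \Pi_{\pa_1})$-adjunction gives, for every $x\in \X(b)$, a natural bijection
\[
\Phi_x:\X(b\times_a b)(\pa_1^*(x),\pa_0^*(x))\;\longrightarrow\; \X(b)(x,\textbf{G}_\iota(x)),\qquad \textbf{G}_\iota=\iota^*\Pi_\iota.
\]

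The heart of the argument is to show that $\Phi_x$ restricts to a bijection between descent data on $x$ and $\textbf{G}_\iota$-coalgebra structures on $x$. The counit axiom $s_0^*(\vartheta)=1$ should translate, via the simplicial identity $\pa_0 s_0=\pa_1 s_0=1_b$ together with the explicit form of the Beck--Chevalley mate, into the counit condition $\e_{\textbf{G}_\iota}\cdot\gamma=1_x$ for $\gamma=\Phi_x(\vartheta)$. The descent cocycle condition, which lives on the triple pullback $b\times_a b\times_a b$, translates by repeated use of Beck--Chevalley across the three canonical pullback squares built from $\pa_0,\pa_1,\pa_2$ and by the simplicial identities $\pa_i\pa_j=\pa_{j-1}\pa_i$ into coassociativity with respect to the comultiplication $\delta_{\textbf{G}_\iota}=\iota^*\eta\Pi_\iota$. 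A direct naturality argument then shows that morphisms of descent data correspond, under $\Phi$, to $\textbf{G}_\iota$-coalgebra morphisms and vice versa, producing a functorial isomorphism $\mathrm{Des}_\X(\iota)\simeq (\X(b))^{\textbf{G}_\iota}$.

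To identify $K_\iota$ with the comparison functor $\X(a)\to (\X(b))^{\textbf{G}_\iota}$, fix $z\in \X(a)$ and observe that the canonical descent datum on $\iota^*(z)$, namely the composite of pseudo-functor canonical isomorphisms $\pa_1^*\iota^*(z)\simeq (\iota\pa_1)^*(z)=(\iota\pa_0)^*(z)\simeq \pa_0^*\iota^*(z)$, transports under $\Phi_{\iota^*(z)}$ to $\iota^*(z)\xr{\iota^*\eta_z}\iota^*\Pi_\iota\iota^*(z)=\textbf{G}_\iota(\iota^*(z))$, which is precisely the image of $z$ under the comparison functor. The final assertion is then immediate: $\iota$ is an effective $\X$-descent morphism exactly when $K_\iota$ is an equivalence, which, via the isomorphism above, is the same as saying that the comparison functor is an equivalence, i.e.\ that $\iota^*$ is comonadic.

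The main obstacle will be the middle step: translating the descent cocycle into coassociativity requires composing three Beck--Chevalley isomorphisms attached to the pullback squares living over $b\times_a b\times_a b$ and invoking the pseudo-functor coherence of $\X$ to check that the resulting 2-cell is the mate of $\delta_{\textbf{G}_\iota}$. This bookkeeping is where all the real work lies; everything else is either formal adjunction yoga or direct unpacking of definitions.
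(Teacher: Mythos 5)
The paper does not prove this theorem at all: it is stated in the Appendix as a classical result (the B\'enabou--Roubaud--Beck theorem) with a pointer to \cite[Proposition B1.5.5]{Jon}, so there is no in-paper proof to compare against. Your outline is precisely the standard argument found in that reference: transpose a $2$-cell $\pa_1^*(x)\to\pa_0^*(x)$ across the adjunction $\pa_1^*\dashv\Pi_{\pa_1}$ and the Beck--Chevalley isomorphism $\iota^*\Pi_\iota\simeq\Pi_{\pa_1}\pa_0^*$ for the self-pullback of $\iota$, match the normalization condition $s_0^*(\vartheta)=1$ with the counit axiom and the cocycle condition with coassociativity for $\delta_{\mathbf{G}_\iota}=\iota^*\eta\Pi_\iota$, identify the canonical descent datum on $\iota^*(z)$ with $\iota^*\eta_z$ so that $K_\iota$ becomes the comparison functor, and dualize for the coproduct case. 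All of the ingredients you name are the correct ones and the strategy has no conceptual gap; the only caveat is that the step you explicitly defer --- verifying that the composite of the Beck--Chevalley mates over $b\times_a b\times_a b$, together with the pseudofunctor coherence isomorphisms, is exactly the mate of $\delta_{\mathbf{G}_\iota}$ --- is where essentially all the work of a complete proof resides, so a full write-up would still have to carry out that coherence computation.
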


\subsection{Amitsur cohomology} Let $F$ be a functor  on the category $(\A \!\downarrow \! a)^{\text{op}}$ with values in the category of groups.
 Applying $F$ to the augmented simplicial object (\ref{Amitsurg}), one gets a coaugmented  cosimplicial group
\[\xymatrix{(b / a,F)_*:\,\,F(a) \ar[r]^-{F(\iota)} & F(b) \ar@<-1mm>[r]_-{F(\pa_1)}  \ar@<1mm>[r]^-{F(\pa_0)}
& F(b\times_a b)\ar@/_2pc/ [l]_{F(s_0)}  \ar[rr]|-{F(\pa_1)}\ar@<2mm>[rr]^-{F(\pa_0)} \ar@<-2mm>[rr]_-{F(\pa_2)}&&
F(b\times_a b \times_a b) \ar@/_3pc/ [ll]_{F(s_0), \, F(s_1)}\ar@{}[r]|(.65){.\,.\,.}&}\] with cofaces $F(\pa_i)$
and codegeneracies $F(s_i)$, and hence one has the non-abelian 0-cohomology group $\mathcal{H}^0((b/a,F)_*$ and the non-abelian 1-cohomology pointed set
$\mathcal{H}^1((b/a,F)_*)$.  More precisely,
$\mathcal{H}^0((b/a,F)_*)$  is the equalizer of the pair $(F(\pa_0),F(\pa_1))$.

On the other hand, a 1-\emph{cocycle }is an element $x \in F(b\times_a b)$ such that
$$F(\pa_1)(x)=F(\pa_2)(x)\cdot F(\pa_0)(x)$$ in $F(b \times_a b \times_a b)$. Write $\mathcal{Z}^1((b/a,F)_*)$ for the set of 1-cocycles.
This set is pointed with point the unit element of $F(b \times_a b \times_a b)$. Two 1-cocycles $x$ and $x'$ are equivalent
if $$x'=F(\pa_1)(y)\cdot x \cdot F(\pa_0)(y)^{-1}$$ for some element $y \in F(b \times_a b)$.
This is an equivalence relation on $\mathcal{Z}^1(b/a,F)_*)$ and $\mathcal{H}^1((b/a,F)_*)$ is defined as the factor-set of equivalence classes of 1-cocycles, and it is a pointed set.

We call $\mathcal{H}^0((b/a,F)_*$ (resp. $\mathcal{H}^1((b/a,F)_*$) the
\emph{zeroth Amitsur cohomology group of} $\iota : a \to b$ \emph{with values in }$F$ (resp. the
\emph{first Amitsur cohomology pointed set of} $\iota : a \to b$ \emph{with values in }$F$) and denote it by
$\bH^0(\iota, F)$ (resp. $\bH^1(\iota, F)$).

If for each $n\geq 1$, $F(\underbrace{b \times_a b \times_a \cdots \times_a b}_{n \text{times}})$ is an abelian
group (which is indeed the case if the functor $F$ factors through the category of abelian groups),  it is possible
to define higher cohomology groups as follows. Let
$$C(\iota, F):0 \xr{} F(a) \xr{F(\iota)} F(b) \xr{\Delta_1} F(b\times_a b) \xr{\Delta_2} \cdots$$
$$\Delta_n=\prod_{i=0}^{n} (F(\pa_n))^{(-1)^n}, \, n\geq 1$$ be the complex of abelian groups associated
to the abelian cosimplicial group $(F/\iota)^*$. The cohomology groups of this complex are called the \emph{Amitsur cohomology
groups of} $\iota : a \to b$ \emph{with values in }$F$ and are denoted by $\bH^i(\iota \,, F)$.

The following result of Grothendieck is to be found in \cite{Gr}.
\begin{proposition}\label{Grothendieck} Let $\X: \A^{\text{op}} \to \mathbf{CAT}$ be an $\A$-indexed category,
$\iota:b \to a$ a morphism in $\A$ and $x \in \X(a)$. Then the assignment that takes
$\alpha \in \emph{\textbf{Aut}}_{\X(b\times_a b)}(\pa^*(x))$, where $\pa$ denotes the common value of
$\iota \cdot \pa_0$ and $\iota \cdot \pa_1$, to the composite
$$\pa^*_1(\iota^*(x))\simeq (\iota \cdot \pa_1)^*(x)=\pa^*(x) \xr{\alpha}\pa^*(x)\simeq (\iota \cdot \pa_0)^*(x)\simeq \pa^*_0(\iota^*(x))$$
yields an isomorphism
$$\Upsilon^{\iota,x}:\mathcal{Z}^1(\iota \,, \emph{\ul{\textbf{Aut}}}^\mathcal{X}_{\,x}) \simeq \mathrm{Des}_\X (\iota^*(x))$$ of the pointed sets.
When $\iota$ is an effective $\X$-descent morphism, $\Upsilon^{\iota,x}$ induces an isomorphism
$$\widehat{\Upsilon}^{\iota,x}:\mathcal{H}^1(\iota \,, \emph{\ul{\textbf{Aut}}}^\mathcal{X}_{\,x})\simeq \emph{\textbf{Des}}_\X(\iota)$$
of pointed sets.
\end{proposition}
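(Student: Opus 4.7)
The plan is to unpack both sides of the proposed bijection and observe that, once the canonical comparison isomorphisms of the pseudo-functor $\X$ are absorbed, a descent datum on $\iota^*(x)$ is tautologically the same information as an automorphism of $\pa^*(x)$, with the two cocycle conditions matching term by term. Throughout, I would fix the canonical isomorphisms $c_i : \pa_i^*(\iota^*(x)) \simeq (\iota\cdot\pa_i)^*(x) = \pa^*(x)$ for $i = 0,1$, and the analogous double-index comparisons $c_{i,j} : \pa_i^*\pa_j^*(\iota^*(x)) \simeq \tilde\pa^*(x)$ on the triple fibre product (with $\tilde\pa$ the common composite to $a$); these are supplied by the pseudo-functor structure on $\X$ and satisfy the usual coherence identities.

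With these in hand, set $\Upsilon^{\iota,x}(\alpha) := c_0^{-1}\circ \alpha \circ c_1$ as an isomorphism $\pa_1^*(\iota^*(x)) \to \pa_0^*(\iota^*(x))$, and define $\Psi(\vartheta) := c_0\circ\vartheta\circ c_1^{-1}$ as its candidate inverse. These are tautologically mutually inverse bijections between $\textbf{Aut}_{\X(b\times_a b)}(\pa^*(x))$ and the set of isomorphisms $\pa_1^*(\iota^*(x)) \to \pa_0^*(\iota^*(x))$ in $\X(b\times_a b)$. Moreover $\Upsilon^{\iota,x}(1_{\pa^*(x)})$ is, by construction, precisely the canonical descent datum on $\iota^*(x)$, so basepoints are preserved.

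The heart of the matter is to show that the cocycle condition on $\alpha$ is equivalent to the pentagon plus normalization for $\vartheta = \Upsilon^{\iota,x}(\alpha)$. For each $i \in \{0,1,2\}$ the map $\pa_i^*(\vartheta)$ rewrites, after inserting the appropriate double-index comparisons, as $c_{i,0}^{-1}\circ \pa_i^*(\alpha)\circ c_{i,1}$. Substituting these rewritings into the pentagon diagram defining a descent datum, all occurrences of $c_{i,j}$ cancel pairwise by pseudo-functorial coherence — indeed the ``unlabelled'' isomorphisms appearing in the pentagon are exactly the coherent combinations that enable these cancellations — and what remains is the identity $\pa_1^*(\alpha) = \pa_2^*(\alpha)\circ\pa_0^*(\alpha)$ in $\textbf{Aut}_{\X((b/a)_2)}(\tilde\pa^*(x))$, which is the cocycle relation $F(\pa_1)(\alpha) = F(\pa_2)(\alpha)\cdot F(\pa_0)(\alpha)$. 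The normalization $s_0^*(\vartheta) = 1$ then follows automatically by pulling the cocycle equation back along $s_0$ and invoking the simplicial identities $\pa_i s_0 = \mathrm{id}$, which force $s_0^*(\alpha) = 1$ and collapse the comparisons $c_0,c_1$ to identities. This orchestration of canonical isos is the single step I expect to be the main obstacle — the proof is entirely formal once one commits to the coherence axioms of pseudo-functors, but requires careful bookkeeping.

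For the passage to equivalence classes, two cocycles $\alpha, \alpha'$ are equivalent in $\mathcal{Z}^1(\iota, \ul{\textbf{Aut}}^{\mathcal{X}}_{\,x})$ precisely when $\alpha' = F(\pa_1)(\beta)\cdot\alpha\cdot F(\pa_0)(\beta)^{-1}$ for some $\beta \in F(\iota) = \textbf{Aut}_{\X(b)}(\iota^*(x))$; and on the descent side, $(\iota^*(x), \vartheta)$ and $(\iota^*(x), \vartheta')$ are isomorphic in $\mathrm{Des}_\X(\iota)$ iff there is an automorphism $\beta$ of $\iota^*(x)$ with $\vartheta' = \pa_0^*(\beta)\circ\vartheta\circ\pa_1^*(\beta)^{-1}$, which translates under the same dictionary $F(\pa_i)(\beta) = c_i \circ \pa_i^*(\beta)\circ c_i^{-1}$ to the cocycle-equivalence relation above. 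This yields the induced bijection $\widehat{\Upsilon}^{\iota,x}:\mathcal{H}^1(\iota, \ul{\textbf{Aut}}^{\mathcal{X}}_{\,x}) \simeq \textbf{Des}_\X(\iota)$ of pointed sets; in the effective-descent case $K_\iota$ being an equivalence guarantees that every iso-class of descent data is represented on an object of the form $\iota^*(x)$ up to such a relabeling, so $\widehat{\Upsilon}^{\iota,x}$ identifies the first Amitsur cohomology pointed set with the whole of $\textbf{Des}_\X(\iota)$.
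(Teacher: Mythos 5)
The paper does not prove this proposition at all --- it is quoted from Grothendieck's expos\'e --- so your argument has to stand on its own; the strategy you adopt (absorb the pseudo-functorial comparison isomorphisms $c_i$ and match the descent pentagon against the group cocycle identity term by term) is precisely the classical one and is sound in outline. Two steps, however, do not work as written. The first is the normalization $s_0^*(\vartheta)=1$. Pulling the cocycle equation back along a degeneracy $s_0\colon (b/a)_1\to (b/a)_2$ and invoking $\partial_0 s_0=\partial_1 s_0=\mathrm{id}$ only yields $F(\partial_1)\bigl(F(s_0)(\alpha)\bigr)=1$, because the third face satisfies $\partial_2 s_0=s_0\partial_1$ rather than $\mathrm{id}$, and you cannot cancel $F(\partial_1)$ without an injectivity hypothesis. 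The correct move is to pull the cocycle identity back along the total diagonal $b\to b\times_a b\times_a b$, all three faces of which compose to the diagonal $s_0\colon b\to b\times_a b$; this gives $F(s_0)(\alpha)=F(s_0)(\alpha)\cdot F(s_0)(\alpha)$ in the group $F(b)$, hence $F(s_0)(\alpha)=1$, which is the normalization after conjugating by the canonical isomorphisms.

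The second problem is your closing sentence. Effectiveness of $\iota$ does \emph{not} imply that ``every iso-class of descent data is represented on an object of the form $\iota^*(x)$'': it says that every descent datum is isomorphic to $(\iota^*(z),\mathrm{can})$ for \emph{some} $z\in\X(a)$, with no control over the underlying object, and $\mathcal{H}^1(\iota,\ul{\textbf{Aut}}^{\X}_{\,x})$ certainly does not compute the isomorphism classes of \emph{all} descent data (in the module-theoretic example it sees only the twisted forms of the fixed $x$, not every descended object). The intended target, in the paper's own notation, is $\mathbf{Des}_\X(\iota^*(x))$ --- descent data on the fixed object $\iota^*(x)$ modulo isomorphism in $\mathrm{Des}_\X(\iota)$ --- and for that your dictionary $F(\partial_i)(\beta)=c_i\circ\partial_i^*(\beta)\circ c_i^{-1}$, matching coboundaries with isomorphisms of descent data over automorphisms $\beta$ of $\iota^*(x)$, is already a complete argument and does not even use effectiveness. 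So the repair here is to delete the overreaching final claim rather than to try to justify it.
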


\bibliographystyle{amsplain}

\end{document}